\title{Selective games on binary relations}
\author[R. R. Dias]{Rodrigo R. Dias$^{\ddagger}$}
\thanks{$^{\ddagger}$ Supported by FAPESP (2013/10363-3)}
\address{Instituto de Matem\'atica e Estat\'istica,
Universidade de S\~ao Paulo, Caixa Postal 66281,
S\~ao Paulo, SP, 05315-970, Brazil}
\email{roque@ime.usp.br}
\author[M. Scheepers]{Marion Scheepers}
\address{Department of Mathematics, Boise State University, Boise, ID,
  83725, United States}
\email{mscheepe@boisestate.edu}
\keywords{topological games, selection principles, binary relations,
  product spaces}
\subjclass[2010]{Primary 91A44;
  Secondary 54B10, 54D20, 54D45, 54D65, 54D99}
\begin{document}

\newtheorem{defin}{Definition}[section]

\newtheorem{prop}[defin]{Proposition}

\newtheorem{prob}[defin]{Problem}

\newtheorem{lemma}[defin]{Lemma}

\newtheorem{corol}[defin]{Corollary}

\newtheorem{example}[defin]{Example}

\newtheorem{thm}[defin]{Theorem}

\newtheorem{remark}[defin]{Remark}

\begin{abstract}

We present a unified approach, based on dominating families in binary
relations, for the study of topological properties defined in terms of
selection principles and the games associated to them.

\end{abstract}

\maketitle

\section*{Introduction}

Classical games introduced by Berner and Juh\'asz \cite{berner},
Galvin \cite{galvin}, Gruenhage \cite{gruen76}
and Telg\'arsky \cite{telg1,telg2} associated with diverse topological
properties,  as well as several subsequent games, can be considered in
a single unifying framework based on the notion of a \emph{relation},
defined below in Definition \ref{def.relation}, and of a
\emph{dominating family} for a relation, defined  below in Definition
\ref{def.Dom}. We define these games in Definition
\ref{def.Pgame}. This framework subsumes and clarifies several
isolated theorems about topological games.\footnote{Which
specific theorems in the literature are affected will be presented
later in the paper.}

One of the two main phenomena about these topological games that is
addressed here is: Consider a topological property, $\mathsf{E}$. For
many examples of $\mathsf{E}$ one can find spaces $X$ and $Y$ each of
which has the property $\mathsf{E}$, while the product space $X\times
Y$ does not have the property $\mathsf{E}$. Define a space $X$ to be
\emph{productively $\mathsf{E}$} if, for each space $Y$ that has
property $\mathsf{E}$, also $X\times Y$ has the property
$\mathsf{E}$. For some properties $\mathsf{E}$ it is a significant
mathematical problem to characterize the spaces $X$ that are
productively $\mathsf{E}$. For several isolated examples of
topological games $\mathsf{G}$ it has been found that, if
a certain player of the game ${\sf G}$ on a space $X$
has a winning strategy, then $X$ is productively $\mathsf{E}$.

We study the productivity of properties $\mathsf{E}$ in this abstract
context. The four properties $\mathsf{E}$ we consider
--- which will be described in detail after
Definition \ref{def.Gfin} ---
are as follows:

\begin{itemize}

\item[$\mathsf{E}_1$:]
TWO
has a winning strategy in the game $\mathsf{G}$;

\item[$\mathsf{E}_2$:]
ONE
does not have a winning strategy in the game $\mathsf{G}$;

\item[$\mathsf{E}_3$:]
A selective version of a certain countability hypothesis holds;

\item[$\mathsf{E}_4$:]
A certain countability hypothesis holds.

\end{itemize}

For the properties we consider it will be the case that 
\[
  \mathsf{E}_1\Rightarrow \mathsf{E}_2 \Rightarrow \mathsf{E}_3 \Rightarrow \mathsf{E}_4,
\]
where sometimes, but not always, an implication is reversible.

The nature of our theorems is as follows:

\begin{itemize}

\item[$\cdot$]
A relation in the class $\mathsf{E}_1$ is productively $\mathsf{E}_1$.

\item[$\cdot$]
A relation in the class $\mathsf{E}_1$ is productively $\mathsf{E}_3$.

\item[$\cdot$]
A relation in the class $\mathsf{E}_1$ is productively $\mathsf{E}_4$.

\end{itemize}

It is curious that as of yet the techniques for the other implications
do not seem to produce the implication that a space in the class
$\mathsf{E}_1$ is productively $\mathsf{E}_2$.
Some questions related to this issue will be posed at the end of the
paper.

These results about products bring us to analyzing situations in which
TWO has a winning strategy in a game, and the second of the two main 
phenomena addressed here: In many instances of games $\mathsf{G}$ it
is known that, if player TWO has a winning strategy in the game
$\mathsf{G}$, then player TWO has a winning strategy in a game
$\mathsf{G}^{\prime}$, in which the winning condition for TWO appears
more stringent.


This paper is organized as follows: After establishing notational
conventions in Section 1 we introduce a general framework for the theory regarding
$\mathsf{E}_1$ in Section 2. In Section 3 we translate classical duality results on games
to the new framework. In Sections 4 and 5 we prove product
theorems.
Sections 6 and 7 are dedicated to situations in which the
existence of a winning strategy for player TWO in a certain game turns
out to be equivalent to the same condition in other games that are
seemingly more difficult for TWO.
In Section 8 we study conditions under which a Lindel\"of-like
property is equivalent to ONE not having a winning strategy in the
selective game associated to the property being considered.
Section 9 contains some final remarks about the results presented in
the paper.


\section{Notational conventions}

Throughout our paper $X$ denotes the underlying set of a topological
space and $\tau$ denotes the ambient topology on the space. Whenever a
second topological space $Y$ is involved, its topology will be denoted
by $\rho$.
Unless explicitly stated otherwise, we do not make any assumptions
about separation hypotheses on the topological spaces in our results.

For a space $X$ and a point $x\in X$, we write
$\tau_x=\{U\in\tau:\, x\in U\}$. For a subset $A$ of $X$,
the closure of $A$ in $X$ is denoted by $\overline{A}$. 
The set of all compact subsets of $X$ is denoted by $K(X)$.

A set $A$ is \emph{countable} if $|A|\le\aleph_0$.
For a cardinal number $\lambda$, we write
$\lbrack A\rbrack^{<\lambda}=\{B\subseteq A:\, \vert
  B\vert<\lambda\}$,
$\lbrack A\rbrack^{\lambda}=\{B\subseteq A:\, \vert B\vert=\lambda\}$
and
$\lbrack A\rbrack^{\le\lambda}=\lbrack A\rbrack^{<\lambda}\cup
\lbrack A\rbrack^{\lambda}$. The set of all functions from a set $A$
to a set $B$ is denoted by $\mbox{}^AB$; we also write
$\mbox{}^{<\omega}B$ for $\bigcup_{n\in\omega}\mbox{}^nB$.

Throughout the paper we will make use of several families associated
to a topological space $X$; the reader is referred to Example
\ref{ex.Dom} for both their definition and the notation we adopt to
denote these families.

For definitions of concepts found in the paper that are
neither listed here nor defined right before the result in which they
appear,
the reader is referred to \cite{eng}, \cite{hrbacek} and \cite{kunen}.

We end this section with two definitions we shall make use of
frequently in the paper.

\begin{defin}
\label{def.Lindelof}

Let $\mathcal{A}$ and $\mathcal{B}$ be families of sets.
We say that
\emph{$(\mathcal{A},\mathcal{B})$-Lindel\"of} holds if every element
of $\mathcal{A}$ has a countable subset that is an element of
$\mathcal{B}$. If $\mathcal{A}=\mathcal{B}$, we will say
\emph{$\mathcal{A}$-Lindel\"of} instead of
$(\mathcal{A},\mathcal{B})$-Lindel\"of.

\end{defin}

\begin{defin}
\label{def.S1-Sfin}

Let $\mathcal{A}$ and $\mathcal{B}$ be families of sets.

\begin{itemize}

\item[$\cdot$]
The notation $\mathsf{S}_1(\mathcal{A},\mathcal{B})$ abbreviates the
following statement:
\begin{quote}
For every sequence $(A_n)_{n\in\omega}$ of elements of $\mathcal{A}$,
there is a sequence $(B_n)_{n\in\omega}$ such that $B_n\in A_n$ for
all $n\in\omega$ and $\{B_n:n\in\omega\}\in\mathcal{B}$.
\end{quote}

\item[$\cdot$]
The notation $\mathsf{S}_{\mathrm{fin}}(\mathcal{A},\mathcal{B})$
abbreviates the following statement:
\begin{quote}
For every sequence $(A_n)_{n\in\omega}$ of elements of $\mathcal{A}$,
there is a sequence $(F_n)_{n\in\omega}$ such that $F_n\in
[A_n]^{<\aleph_0}$ for all $n\in\omega$ and
$\bigcup_{n\in\omega}F_n\in\mathcal{B}$.
\end{quote}

\end{itemize}

\end{defin}

Note that $\mathsf{S}_1(\mathcal{A},\mathcal{B})$ implies
$\mathsf{S}_{\mathrm{fin}}(\mathcal{A},\mathcal{B})$, which in turn
implies $(\mathcal{A},\mathcal{B})$-Lindel\"of.

\section{Relations and dominating families}

All of our main results in this paper are phrased in terms of
\emph{dominating families in binary relations}, a general framework
that allows us to express a number of topological concepts through a
unified terminology. This section is dedicated to stating the basic
definitions and exploring how this framework can be used to
capture properties of interest in general topology.

The following definition is based on \cite{vojtas}; see also
\cite[Section 4]{blass}.

\begin{defin}\label{def.relation}

A \emph{relation} is a triple $(A,B,R)$, where
$A\neq\emptyset$ and 
$R\subseteq A\times B$ is such that 
$\forall a\in A\,\exists b\in B\;((a,b)\in R)$. 
\end{defin}

We will henceforth adopt the convention of writing $aRb$ instead of $(a,b)\in R$, and we will read
this as ``$b$ \emph{dominates} $a$ \emph{in} $R$''.

\begin{defin}
\label{def.Dom}
For a relation $P=(A,B,R)$, define
$$\mathrm{Dom}(P)=\{Z\subseteq B:\forall a\in A\;\exists
b\in Z\;(aRb)\}.$$
The elements of $\mathrm{Dom}(P)$ are said to be \emph{dominating in
$P$}.

\end{defin}

The table in Example \ref{ex.Dom} has double purpose: It serves to
illustrate instances of relations whose set of dominating
families have topological meaning, and also to define the concepts and
terminology that are listed therein.

\begin{example}
\label{ex.Dom}

Let $X$ be a topological space with topology $\tau$, and let $x\in
X$. The following table lists several examples of families associated
to $X$ (and $x$) that can be expressed as the set $\mathrm{Dom}(P)$
for some relation $P$.\\ \\
{\renewcommand{\arraystretch}{1.5}
\renewcommand{\tabcolsep}{0.2cm}
\begin{tabular}{|c|c|}

\hline

$P$ & $\mathrm{Dom}(P)$ \\

\hline \hline

$(X,\tau,\in)$ &
$\mathcal{O}_X=\{\mathcal{U}\subseteq\tau:X=\bigcup\mathcal{U}\}$ \\

\hline

$([X]^{<\aleph_0},\tau,\subseteq)$ &
$\Omega_X=\{\mathcal{U}\subseteq\tau:\mathcal{U}\textrm{ is an
}\omega\textrm{-cover of }X\}$ \\

\hline

$(K(X),\tau,\subseteq)$
&
$\mathcal{K}_X=\{\mathcal{U}\subseteq\tau:\mathcal{U}\textrm{ is a
}k\textrm{-cover of }X\}$ \\

\hline

{\renewcommand{\arraystretch}{1.2}
\renewcommand{\tabcolsep}{0cm}
\begin{tabular}{c}
$(K(X),K(X)\setminus\{\emptyset\},R)$,\\
where $HRK\leftrightarrow H\cap K=\emptyset$
\end{tabular}}
&
$\mathfrak{M}_X=\{\mathcal{M}\subseteq
K(X)\setminus\{\emptyset\}:\mathcal{M}$ is a moving-off family$\}$\\

\hline

$(\tau\setminus\{\emptyset\},X,\ni)$ &
$\mathfrak{D}_X=\{D\subseteq X:X=\overline{D}\}$ \\

\hline

{\renewcommand{\arraystretch}{1.2}
\renewcommand{\tabcolsep}{0cm}
\begin{tabular}{c}
$(\tau\setminus\{\emptyset\},\tau,R)$,\\
where $URV\leftrightarrow U\cap V\neq\emptyset$
\end{tabular}}
&
$\mathcal{D}_X=\{\mathcal{U}\subseteq\tau:X=\overline{\bigcup\mathcal{U}}\}$\\

\hline

$(\tau\setminus\{\emptyset\},\tau\setminus\{\emptyset\},\supseteq)$ &
$\Pi_X=\{\mathcal{V}\subseteq\tau\setminus\{\emptyset\}:\mathcal{V}$
is a $\pi$-base for $X\}$ \\

\hline

{\renewcommand{\arraystretch}{1.2}
\renewcommand{\tabcolsep}{0cm}
\begin{tabular}{c}
$(\{(a,U):a\in U\in\tau\},\tau,R)$, \\
where $(a,U)RV\leftrightarrow a\in V\subseteq U$
\end{tabular}}
&
$\mathfrak{B}_X=\{\mathcal{B}\subseteq\tau:\mathcal{B}$ is a base for $X\}$\\

\hline

$(\tau_x,\tau_x,\supseteq)$
&
$\mathfrak{V}_{x}=\{\mathcal{V}\subseteq\tau_x:\mathcal{V}$ is a local
base for $X$ at $x\}$\\

\hline

{\renewcommand{\arraystretch}{1.2}
\renewcommand{\tabcolsep}{0cm}
\begin{tabular}{c}
$(X\setminus\{x\},\tau_x,\notin)$\\
(assuming $(X,\tau)$ is $T_1$)
\end{tabular}}
&
$\Psi_{x}=\{\mathcal{V}\subseteq\tau_x:\mathcal{V}\neq\emptyset$ and
$\bigcap\mathcal{V}=\{x\}\}$\\

\hline

$(\tau_x,X,\ni)$
&
$\Omega_{x}=\{A\subseteq X:x\in\overline{A}\}$ \\

\hline

$(\tau_x,[X]^{<\aleph_0}\setminus\{\emptyset\},\supseteq)$
&
$\pi\mathcal{N}_{x}=\{\mathcal{S}\subseteq[X]^{<\aleph_0}\setminus\{\emptyset\}:\mathcal{S}$
is a $\pi$-network at $x\}$ \\

\hline

$(\tau_x,[X]^{\le\aleph_0}\setminus\{\emptyset\},\supseteq)$
&
$\pi\mathcal{N}^{\aleph_0}_{x}=\{\mathcal{S}\subseteq[X]^{\le\aleph_0}\setminus\{\emptyset\}:\mathcal{S}$
is a $\pi$-network at $x\}$ \\

\hline

\end{tabular}}\\

\end{example}

Note that several classical topological properties can be phrased in
terms of families of the form $\mathrm{Dom}(P)$: For example, a
topological space $X$ is Lindel\"of if and only if
$\mathcal{O}_X$-Lindel\"of holds, and $X$ is countably tight at a
point $x\in X$ if and only if $\Omega_x$-Lindel\"of holds.

\section{Games and duality}

We now proceed to defining the basic games we shall consider in this
paper.

\begin{defin}
\label{equivalent.games}

We say that two games $G$ and $G^{\prime}$ are \emph{equivalent} if
both of the following hold:

\begin{itemize}

\item[$\cdot$]
ONE has a winning strategy in the game $G$ if, and only if,
ONE has a winning strategy in $G^{\prime}$; and

\item[$\cdot$]
TWO has a winning strategy in the game $G$ if, and only if,
TWO has a winning strategy in $G^{\prime}$.

\end{itemize}

\end{defin}

\begin{defin}
\label{def.dual}

We say that two games $G$ and $G^{\prime}$ are \emph{dual games} if
both of the following hold:

\begin{itemize}

\item[$\cdot$]
ONE has a winning strategy in the game $G$ if, and only if,
TWO has a winning strategy in $G^{\prime}$; and

\item[$\cdot$]
TWO has a winning strategy in the game $G$ if, and only
if, ONE has a winning strategy in $G^{\prime}$.

\end{itemize}

\end{defin}

The first game we consider is a natural game associated with a pair of
relations:

\begin{defin}
\label{def.Pgame}

Let $P=(A,B,R)$ and $Q=(C,D,T)$ be relations. The game
$\mathsf{G}(P,Q)$ is defined as follows: In each
inning $n\in\omega$, player ONE chooses $a_n\in A$, and
then player TWO chooses $b_n\in B$ with $a_n R b_n$. ONE
wins a play
\[
  a_0,\, b_0,\, \cdots, a_n,\, b_n\, \cdots
\]
 if
$\{b_n:n\in\omega\}\in\mathrm{Dom}(Q)$. Otherwise, TWO wins.

\end{defin}

As the following examples show, there are several topological games
studied in the literature that can be regarded as instances of the
game $\mathsf{G}(P,Q)$ introduced above.

\begin{example}

Let $P$ be the relation $(X,\tau,\in)$. Then
$\mathrm{Dom}(P)$ is $\mathcal{O}_X$, the collection of open covers of
$X$. In this instance the game $\mathsf{G}(P,P)$ corresponds to the
point-open game $\mathsf{G}(X)$ of Galvin, introduced in
\cite{galvin}.

\end{example}

\begin{example}
\label{game.berner}

Let $P$ be the relation
$(\tau\setminus\{\emptyset\},X,\ni)$. Then $\mathrm{Dom}(P)$ is
$\mathfrak{D}_X$, the collection of dense subsets of $X$. In this
instance the game $\mathsf{G}(P,P)$ corresponds to the point-picking
game $\mathsf{G}^D_{\omega}(X)$ of Berner and Juh\'asz, introduced in
\cite{berner}.

\end{example}

\begin{example}
\label{game.gruen}

Fix a point $x\in X$. Let
$P$ be the relation $(\tau_x,X ,\ni)$. Then $\mathrm{Dom}(P)$ is
$\Omega_x$, the collection of subsets of $X$ that have $x$ as a
cluster point. In this instance the game $\mathsf{G}(P,P)$ corresponds
to the game $\mathsf{G}^c_{O,\, P}(X,x)$ of Gruenhage, introduced in
\cite{gruen76}.

\end{example}

\begin{example}
\label{game.tkachuk}

Let $P$ be the relation $(X,\tau,\in)$ and $Q$ be the relation
$(\tau\setminus\{\emptyset\},\tau,T)$, where $UTV\leftrightarrow U\cap
V\neq\emptyset$. Then $\mathrm{Dom}(Q)$ is
$\mathcal{D}_X$, the collection of open families with union dense
in $X$.
In this instance the game $\mathsf{G}(P,Q)$ corresponds to the game
$\theta(X)$ of Tkachuk, introduced in \cite{tkachuk}.

\end{example}

Now, towards the duality theorem mentioned in the introduction, define
the following game:

\begin{defin}
\label{def.G1}

Let $\mathcal{A}$ and $\mathcal{B}$ be families of
sets with $\mathcal{A}\neq\emptyset$. The game
${\sf G}_1(\mathcal{A},\mathcal{B})$ is played as follows. In each
inning $n\in \omega$, ONE first chooses a set $A_n\in\mathcal{A}$, and
then TWO responds with a $B_n\in A_n$. A play
\[
  A_0,\, B_0,\, \cdots,\, A_n,\, B_n,\,\cdots
\]
is won by TWO if $\{B_n:n\in\omega\}$ is an element of
$\mathcal{B}$. Otherwise, ONE wins.

\end{defin}

We shall also refer to the following variation of the previous game
later on.

\begin{defin}
\label{def.Gfin}

Let $\mathcal{A}$ and $\mathcal{B}$ be families of
sets with $\mathcal{A}\neq\emptyset$. The game
${\sf G}_{\mathrm{fin}}(\mathcal{A},\mathcal{B})$ is played as
follows. In each inning $n\in \omega$, ONE first chooses a set
$A_n\in\mathcal{A}$, and then TWO responds with a finite subset $F_n$
of $A_n$. A play
\[
  A_0,\, F_0,\, \cdots,\, A_n,\, F_n,\,\cdots
\]
is won by TWO if $\bigcup_{n\in\omega}F_n$ is an element of
$\mathcal{B}$. Otherwise, ONE wins.

\end{defin}

It is immediate that, if TWO has a winning strategy in
$\mathsf{G}_1(\mathcal{A},\mathcal{B})$, then TWO has a winning
strategy in
$\mathsf{G}_{\mathrm{fin}}(\mathcal{A},\mathcal{B})$. Similarly, if
ONE has a winning strategy in
$\mathsf{G}_{\mathrm{fin}}(\mathcal{A},\mathcal{B})$, then ONE has a
winning strategy in $\mathsf{G}_1(\mathcal{A},\mathcal{B})$.

Furthermore, we have the following chain of implications (now defining
in more detail the four properties $\mathsf{E}_1$--$\mathsf{E}_4$
considered in the Introduction):
\begin{center}
TWO has a winning strategy in
$\mathsf{G}_1(\mathcal{A},\mathcal{B})$\\
$\Downarrow$\\
ONE does not have a winning strategy in
$\mathsf{G}_1(\mathcal{A},\mathcal{B})$\\
$\Downarrow$\\
$\mathsf{S}_1(\mathcal{A},\mathcal{B})$\\
$\Downarrow$\\
$(\mathcal{A},\mathcal{B})$-Lindel\"of.\\
\mbox{}
\end{center}
Note that the same implications hold when the subscript ``$1$'' is
replaced with ``$\mathrm{fin}$''.

The following duality result is a rephrasing of Theorem 1 of
\cite{galvin}.

\begin{thm}[Galvin \cite{galvin}]
\label{dual}

For all relations $P$ and $Q$, the games $\mathsf{G}(P,Q)$ and
$\mathsf{G}_1(\mathrm{Dom}(P),\mathrm{Dom}(Q))$ are dual.

\end{thm}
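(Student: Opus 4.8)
The plan is to verify the two biconditionals of Definition \ref{def.dual}, which amounts to four implications. Write $\mathsf{G}=\mathsf{G}(P,Q)$ and $\mathsf{G}'=\mathsf{G}_1(\mathrm{Dom}(P),\mathrm{Dom}(Q))$, with $P=(A,B,R)$ and $Q=(C,D,T)$, and for $a\in A$ put $R[a]=\{b\in B:aRb\}$. Three of the implications are routine strategy transfers and one is a genuinely delicate converse. Throughout I use two elementary principles. First, if $Z\in\mathrm{Dom}(P)$ and $a\in A$ then $Z$ contains a dominator of $a$, i.e.\ $Z\cap R[a]\neq\emptyset$; this lets a member-picking player turn a challenge $a$ into a legal answer. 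Second, the winning condition ``$\{b_n:n\in\omega\}\in\mathrm{Dom}(Q)$'' is literally the same set-theoretic statement in both games, so a play of one can be copied as a play of the other once the moves are matched, with only the identity of the winner reversed.

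For the three easy implications I transfer strategies by direct simulation. If ONE has a winning strategy $\sigma$ in $\mathsf{G}$, I obtain a strategy for TWO in $\mathsf{G}'$: when ONE plays $Z_n\in\mathrm{Dom}(P)$, TWO consults $\sigma$ to produce the challenge $a_n$ and, by the first principle, answers with some $b_n\in Z_n\cap R[a_n]$; the resulting $\{b_n\}$ is the outcome of a $\sigma$-play, hence lies in $\mathrm{Dom}(Q)$, so TWO wins $\mathsf{G}'$. By the identical device, a winning strategy for ONE in $\mathsf{G}'$ (a cover-valued function) yields a winning strategy for TWO in $\mathsf{G}$. Finally, if TWO has a winning strategy $\sigma$ in $\mathsf{G}$, I build a winning strategy for ONE in $\mathsf{G}'$ by playing the cover $Z_n=\{\sigma(a_0,\dots,a_{n-1},a):a\in A\}$ of all possible $\sigma$-responses to the next challenge: the first principle gives $Z_n\in\mathrm{Dom}(P)$, and any $b_n\in Z_n$ that TWO selects equals $\sigma(a_0,\dots,a_n)$ for a suitable witness $a_n$, so the run is a $\sigma$-play, $\{b_n\}\notin\mathrm{Dom}(Q)$, and ONE wins $\mathsf{G}'$.

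The remaining implication — from a winning strategy $\rho$ for TWO in $\mathsf{G}'$ to a winning strategy for ONE in $\mathsf{G}$ — is the crux, because ONE in $\mathsf{G}$ must play a single element of $A$ whereas $\rho$ reacts to whole dominating families, so the ``set of all responses'' trick is unavailable. The plan is to have ONE maintain, alongside the real play, a phantom play $Z_0,Z_1,\dots$ of $\mathsf{G}'$ against $\rho$, keeping the invariant that TWO's actual answers satisfy $b_n=\rho(Z_0,\dots,Z_n)$; once the full run is produced, $\{b_n\}\in\mathrm{Dom}(Q)$ since $\rho$ wins, which is exactly ONE's winning condition in $\mathsf{G}$. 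To sustain the invariant, after the history $Z_0,\dots,Z_{n-1}$ is fixed I set $\mathrm{Real}_n=\{\rho(Z_0,\dots,Z_{n-1},Z):Z\in\mathrm{Dom}(P)\}$, the set of elements of $B$ that $\rho$ can be forced to return at inning $n$. ONE then plays an $a_n\in A$ all of whose legal answers are realizable, i.e.\ with $R[a_n]\subseteq\mathrm{Real}_n$: whatever $b_n\in R[a_n]$ TWO picks, a cover $Z_n\ni b_n$ with $\rho(Z_0,\dots,Z_{n-1},Z_n)=b_n$ exists and can be selected, preserving the invariant.

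The single point requiring argument — and the main obstacle — is the existence of such an $a_n$, and this is exactly where the hypothesis on $\rho$ enters. I will argue by contradiction: were every $a\in A$ to have a dominator outside $\mathrm{Real}_n$, then $B\setminus\mathrm{Real}_n$ would meet $R[a]$ for every $a$ and hence belong to $\mathrm{Dom}(P)$; playing $Z=B\setminus\mathrm{Real}_n$ against $\rho$ would return an element of $B\setminus\mathrm{Real}_n$, contradicting the definition of $\mathrm{Real}_n$ as the set of all such returns. Thus a suitable $a_n$ always exists, the recursion goes through, and ONE wins $\mathsf{G}$. Assembling the four implications then yields that $\mathsf{G}(P,Q)$ and $\mathsf{G}_1(\mathrm{Dom}(P),\mathrm{Dom}(Q))$ are dual; everything beyond the existence step above is bookkeeping with the two elementary principles.
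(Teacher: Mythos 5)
Your proposal is correct and takes essentially the same route as the paper: your existence step for $a_n$ is precisely the paper's key observation $(\dagger)$ (that some $a_s\in A$ has $R[a_s]$ contained in the set of one-step-ahead values of TWO's strategy), proved by the same contradiction argument --- you play the single dominating set $B\setminus\mathrm{Real}_n$ where the paper assembles $\widetilde{Z}=\{b_a:a\in A\}$ from one ``unrealizable'' dominator per $a$, an inessential variant --- and the phantom-play bookkeeping matches the paper's construction of ONE's strategy. The only difference is that you spell out the three routine strategy transfers that the paper leaves to the reader.
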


\begin{proof}

We will prove that, if TWO has a winning strategy in
$\mathsf{G}_1(\mathrm{Dom}(P),\mathrm{Dom}(Q))$, then ONE has a
winning strategy in $\mathsf{G}(P,Q)$. The reader shall find no
difficulty in verifying the remaining three implications.

The core of the proof is the following observation: If
$\sigma:(\mbox{}^{<\omega}\mathrm{Dom}(P))\setminus\{\emptyset\}\rightarrow
B$ is a strategy for TWO in
$\mathsf{G}_1(\mathrm{Dom}(P),\mathrm{Dom}(Q))$, then
\begin{equation}
\forall s\in\mbox{}^{<\omega}\mathrm{Dom}(P)\;\exists\, a_s\in A\;(\{b\in
B:a_sRb\}\subseteq\{\sigma(s^\frown(Z)):Z\in\mathrm{Dom}(P)\}). \tag{$\dagger$}
\end{equation}

Suppose, in order to get a contradiction, that there is
$s\in\mbox{}^{<\omega}\mathrm{Dom}(P)$ witnessing the failure of
$(\dagger)$. Then, for each $a\in A$, there is $b_a\in B$ with $aRb_a$
such that there is no $Z\in\mathrm{Dom}(P)$ satisfying
$b_a=\sigma(s^\frown(Z))$. But then we get a contradiction from the
fact that $\widetilde{Z}=\{b_a:a\in A\}\in\mathrm{Dom}(P)$, since
this implies that $\sigma(s^\frown(\widetilde{Z}))=b_a$ for some $a\in
A$.

Having proved $(\dagger)$, let now
$\sigma:(\mbox{}^{<\omega}\mathrm{Dom}(P))\setminus\{\emptyset\}\rightarrow
B$ be a winning strategy for TWO in
$\mathsf{G}_1(\mathrm{Dom}(P),\mathrm{Dom}(Q))$. A winning strategy
for ONE in $\mathsf{G}(P,Q)$ can then be defined as follows.

ONE's initial move is $a_\emptyset\in A$. If $b_0\in B$ is TWO's
response, it follows from $(\dagger)$ that there is
$Z_0\in\mathrm{Dom}(P)$ such that $b_0=\sigma((Z_0))$; let then
$a_{(Z_0)}\in A$ be ONE's next move in the play. TWO will respond with
some $b_1\in B$ satisfying $a_{(Z_0)}Rb_1$; by $(\dagger)$, there is
$Z_1\in\mathrm{Dom}(P)$ satisfying $b_1=\sigma((Z_0,Z_1))$; ONE's next
move will then be $a_{(Z_0,Z_1)}\in A$; and so forth.

By proceeding in this fashion, we obtain a sequence
$(Z_n)_{n\in\omega}$ of elements of $\mathrm{Dom}(P)$ and a play of
$\mathsf{G}(P,Q)$ in which TWO's move in the inning $n\in\omega$ is
$b_n=\sigma((Z_k)_{k\le n})$. Since $\sigma$ is a winning strategy for
TWO in $\mathsf{G}_1(\mathrm{Dom}(P),\mathrm{Dom}(Q))$, it follows
that
$\{b_n:n\in\omega\}=\{\sigma((Z_k)_{k\le n}):n\in\omega\}\in\mathrm{Dom}(Q)$;
therefore, the strategy above described is a winning strategy for ONE
in $\mathsf{G}(P,Q)$.
\end{proof}

Theorem \ref{dual} expresses in terms of relations and dominating
families the underlying argument in duality results such as the
following corollaries.

The first one, in which the argument above was first presented, is
Theorem 1 of \cite{galvin}.
Recall that the \emph{Rothberger game on $X$}
\cite{galvin} is the game $\mathsf{G}_1(\mathcal{O}_X,\mathcal{O}_X)$.
A topological space $X$ is a \emph{Rothberger space}
\cite{roth} if $\mathsf{S}_1(\mathcal{O}_X,\mathcal{O}_X)$ holds.

\begin{corol}[Galvin \cite{galvin}]\label{cor.rothberger}
For every topological space $X$, the
point-open game on $X$ and the Rothberger game on $X$
are dual games.
\end{corol}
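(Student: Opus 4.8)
The plan is to derive Corollary \ref{cor.rothberger} as an immediate instance of the general duality Theorem \ref{dual} by exhibiting the point-open game and the Rothberger game as the two games appearing in that theorem for a suitable choice of relation. First I would recall the setup: take $P$ to be the relation $(X,\tau,\in)$, the very first entry in the table of Example \ref{ex.Dom}. By that example we have $\mathrm{Dom}(P)=\mathcal{O}_X$, the collection of open covers of $X$. The point-open game $\mathsf{G}(X)$ of Galvin is, by the Example immediately following Definition \ref{def.Pgame}, precisely the game $\mathsf{G}(P,P)$: in each inning ONE picks a point $a_n\in X$ and TWO picks an open set $b_n\in\tau$ with $a_n\in b_n$, and ONE wins exactly when $\{b_n:n\in\omega\}$ is an open cover, i.e. lies in $\mathrm{Dom}(P)=\mathcal{O}_X$.

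Next I would identify the Rothberger game. By the definition recalled in the excerpt, the Rothberger game on $X$ is $\mathsf{G}_1(\mathcal{O}_X,\mathcal{O}_X)$. Since $\mathrm{Dom}(P)=\mathcal{O}_X$, this is exactly the game $\mathsf{G}_1(\mathrm{Dom}(P),\mathrm{Dom}(P))$. Thus the point-open game is $\mathsf{G}(P,P)$ and the Rothberger game is $\mathsf{G}_1(\mathrm{Dom}(P),\mathrm{Dom}(P))$ for the single relation $P=(X,\tau,\in)$.

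Finally I would invoke Theorem \ref{dual} with $Q=P$: it asserts that for all relations $P$ and $Q$ the games $\mathsf{G}(P,Q)$ and $\mathsf{G}_1(\mathrm{Dom}(P),\mathrm{Dom}(Q))$ are dual. Specializing to $Q=P$ yields that $\mathsf{G}(P,P)$ and $\mathsf{G}_1(\mathrm{Dom}(P),\mathrm{Dom}(P))$ are dual, which by the two identifications above is exactly the statement that the point-open game on $X$ and the Rothberger game on $X$ are dual. Since $X$ was an arbitrary topological space, this holds for every $X$, completing the proof.

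I do not anticipate a genuine obstacle here, as the entire content is already carried by Theorem \ref{dual}; the corollary is a dictionary translation. The only point requiring care is to verify cleanly that the two concrete games match the abstract ones move-for-move and that the winning conditions coincide (ONE wins the point-open game iff TWO's chosen open sets cover $X$, which is the condition $\{b_n:n\in\omega\}\in\mathrm{Dom}(P)$ from Definition \ref{def.Pgame}). If anything is delicate, it is confirming that Galvin's original formulation of the point-open game has ONE winning precisely on covers rather than TWO, so that the duality is stated with the correct orientation of players; this is already handled by the Example following Definition \ref{def.Pgame}, so I would simply cite it.
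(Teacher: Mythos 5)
Your proposal is correct and matches the paper's proof exactly: the paper also derives the corollary by applying Theorem \ref{dual} with $P=Q=(X,\tau,\in)$, using the identifications $\mathrm{Dom}(P)=\mathcal{O}_X$ and the equivalence of $\mathsf{G}(P,P)$ with the point-open game. Your write-up merely spells out the dictionary translation that the paper leaves implicit.
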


\begin{proof}

Apply Theorem
\ref{dual} with $P=Q=(X,\tau,\in)$.
\end{proof}

The next corollary is the combination of Theorems 7 and 8 of
\cite{sch6}.

\begin{corol}[Scheepers \cite{sch6}]\label{cor.seldense}
For every topological space $X$, the games
$\mathsf{G}^D_{\omega}(X)$ and
$\mathsf{G}_1(\mathfrak{D}_X,\mathfrak{D}_X)$ are dual games.
\end{corol}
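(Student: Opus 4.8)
The plan is to invoke Theorem \ref{dual}, whose proof machinery is already established, and merely identify the appropriate relation. Recall from Example \ref{game.berner} that the point-picking game $\mathsf{G}^D_\omega(X)$ of Berner and Juh\'asz is precisely $\mathsf{G}(P,P)$ for the relation $P=(\tau\setminus\{\emptyset\},X,\ni)$, and that $\mathrm{Dom}(P)=\mathfrak{D}_X$, the collection of dense subsets of $X$. Thus the two games named in the corollary are literally $\mathsf{G}(P,P)$ and $\mathsf{G}_1(\mathrm{Dom}(P),\mathrm{Dom}(P))$ for this single choice of $P$.

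With that identification in hand, the proof is a one-line application: I would write that the result follows from Theorem \ref{dual} applied with $P=Q=(\tau\setminus\{\emptyset\},X,\ni)$, since then $\mathrm{Dom}(P)=\mathrm{Dom}(Q)=\mathfrak{D}_X$ and the games $\mathsf{G}(P,Q)=\mathsf{G}^D_\omega(X)$ and $\mathsf{G}_1(\mathrm{Dom}(P),\mathrm{Dom}(Q))=\mathsf{G}_1(\mathfrak{D}_X,\mathfrak{D}_X)$ coincide with the two games in the statement. This exactly parallels the proof of Corollary \ref{cor.rothberger}, where the point-open/Rothberger duality was obtained by taking $P=Q=(X,\tau,\in)$.

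There is essentially no obstacle here, which is the point of the abstract framework: the genuine combinatorial content --- the construction in $(\dagger)$ showing that TWO's strategy values cover the dominating options, and the resulting translation of strategies between a game on relations and its selection-game dual --- has already been discharged in Theorem \ref{dual}. The only thing one must \emph{check}, and this is the sole place any verification is needed, is that the relation $(\tau\setminus\{\emptyset\},X,\ni)$ is a legitimate relation in the sense of Definition \ref{def.relation}: one needs $\tau\setminus\{\emptyset\}\neq\emptyset$ and that every nonempty open set contains some point of $X$, i.e. $\forall U\in\tau\setminus\{\emptyset\}\ \exists x\in X\ (U\ni x)$. Both hold trivially for any space with a nonempty open set (which we may assume, the degenerate case being vacuous), so the hypotheses of Theorem \ref{dual} are met.

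Hence the entire argument reduces to citing Example \ref{game.berner} for the identification $\mathsf{G}^D_\omega(X)=\mathsf{G}(P,P)$ and then quoting Theorem \ref{dual}. The comparison with the original sources is that Theorems 7 and 8 of \cite{sch6} each prove one direction of the strategy-transfer by hand; here both directions are subsumed at once, since duality is built into the conclusion of Theorem \ref{dual}. I expect the final written proof to be a single sentence, of the same length and form as the proof of Corollary \ref{cor.rothberger}.
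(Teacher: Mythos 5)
Your proposal is correct and matches the paper's proof exactly: the paper's entire argument is ``Apply Theorem \ref{dual} with $P=Q=(\tau\setminus\{\emptyset\},X,\ni)$,'' which is precisely your one-line application, with the identification of $\mathsf{G}^D_\omega(X)$ as $\mathsf{G}(P,P)$ and $\mathrm{Dom}(P)=\mathfrak{D}_X$ already recorded in Example \ref{game.berner}. Your extra check that $(\tau\setminus\{\emptyset\},X,\ni)$ is a legitimate relation in the sense of Definition \ref{def.relation} is a harmless (and correct) bit of diligence the paper leaves implicit.
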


\begin{proof}

Apply Theorem
\ref{dual} with $P=Q=(\tau\setminus\{\emptyset\},X,\ni)$.
\end{proof}

The next corollary is Theorem 3.3(2) of \cite{tkachuk}.

\begin{corol}[Tkachuk \cite{tkachuk}]
\label{dual.tk}

For a topological space $X$, the games $\theta(X)$ and
$\mathsf{G}_1(\mathcal{O}_X,\mathcal{D}_X)$ are dual games.

\end{corol}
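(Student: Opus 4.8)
The plan is to derive Corollary \ref{dual.tk} directly from Theorem \ref{dual} by exhibiting the two relations $P$ and $Q$ whose associated game $\mathsf{G}(P,Q)$ is Tkachuk's game $\theta(X)$ and whose dominating families are exactly the pair $(\mathcal{O}_X, \mathcal{D}_X)$ appearing in the statement. The correct choice has already been supplied by Example \ref{game.tkachuk}: take $P=(X,\tau,\in)$ and $Q=(\tau\setminus\{\emptyset\},\tau,T)$, where $UTV \leftrightarrow U\cap V\neq\emptyset$. First I would recall from Example \ref{ex.Dom} that $\mathrm{Dom}(P)=\mathcal{O}_X$ (the open covers) and that $\mathrm{Dom}(Q)=\mathcal{D}_X$ (the open families whose union is dense in $X$), so that $\mathsf{G}_1(\mathrm{Dom}(P),\mathrm{Dom}(Q))$ is literally $\mathsf{G}_1(\mathcal{O}_X,\mathcal{D}_X)$.

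Second, I would invoke Example \ref{game.tkachuk}, which identifies $\mathsf{G}(P,Q)$ with Tkachuk's game $\theta(X)$. With these two identifications in hand, the corollary follows immediately: by Theorem \ref{dual}, the games $\mathsf{G}(P,Q)$ and $\mathsf{G}_1(\mathrm{Dom}(P),\mathrm{Dom}(Q))$ are dual, and substituting the identifications gives that $\theta(X)$ and $\mathsf{G}_1(\mathcal{O}_X,\mathcal{D}_X)$ are dual. Thus the proof is, in effect, a one-line application of the duality theorem to the specific pair of relations named in Example \ref{game.tkachuk}.

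The only genuine content that needs checking — and hence the main obstacle, though it is routine rather than deep — is verifying that the game $\mathsf{G}(P,Q)$ really does coincide with Tkachuk's $\theta(X)$, and that $\mathrm{Dom}(Q)=\mathcal{D}_X$. For the latter, a family $\mathcal{V}\subseteq\tau$ is dominating in $Q$ precisely when every nonempty open set $U$ meets some $V\in\mathcal{V}$, which is exactly the condition that $\bigcup\mathcal{V}$ is dense in $X$; this matches the table entry for $\mathcal{D}_X$. For the former, one unwinds Definition \ref{def.Pgame}: in inning $n$ player ONE picks a point (an element of $X$) and TWO responds with an open set containing it, and ONE wins iff the chosen open sets form a family dense-in-union, which is the rule of $\theta(X)$. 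Both verifications are bookkeeping against the definitions already in the excerpt, so I expect no real difficulty; essentially all the work was done in setting up Example \ref{game.tkachuk} and Theorem \ref{dual}.

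\begin{proof}
Apply Theorem \ref{dual} with $P=(X,\tau,\in)$ and $Q=(\tau\setminus\{\emptyset\},\tau,T)$, where $UTV\leftrightarrow U\cap V\neq\emptyset$, as in Example \ref{game.tkachuk}. Since $\mathrm{Dom}(P)=\mathcal{O}_X$ and $\mathrm{Dom}(Q)=\mathcal{D}_X$, and since $\mathsf{G}(P,Q)=\theta(X)$, the result follows.
\end{proof}
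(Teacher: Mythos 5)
Your proposal is correct and coincides with the paper's own proof, which likewise consists of a one-line application of Theorem \ref{dual} with $P=(X,\tau,\in)$ and $Q=(\tau\setminus\{\emptyset\},\tau,T)$, $UTV\leftrightarrow U\cap V\neq\emptyset$, relying on Example \ref{game.tkachuk} and the table in Example \ref{ex.Dom} for the identifications $\mathsf{G}(P,Q)=\theta(X)$, $\mathrm{Dom}(P)=\mathcal{O}_X$ and $\mathrm{Dom}(Q)=\mathcal{D}_X$. Your extra verification that $\mathrm{Dom}(Q)=\mathcal{D}_X$ and that $\mathsf{G}(P,Q)$ unwinds to Tkachuk's game is accurate bookkeeping that the paper leaves implicit.
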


\begin{proof}

Apply Theorem \ref{dual} with $P=(X,\tau,\in)$ and
$Q=(\tau\setminus\{\emptyset\},\tau,T)$,
where $UTV\leftrightarrow U\cap V\neq\emptyset$.
\end{proof}

\begin{corol}[folklore]\label{cor.strcountabletight}
For a topological space $X$ and a point $x\in X$, the games
$\mathsf{G}^c_{O,\, P}(X,x)$ and $\mathsf{G}_1(\Omega_x,\Omega_x)$
on $X$ are dual games.
\end{corol}

\begin{proof}

Apply Theorem
\ref{dual} with $P=Q=(\tau_x,X,\ni)$.
\end{proof}

\begin{remark}
\label{B=D}

It is worth pointing out that, for two relations $P=(A,B,R)$ and
$Q=(C,D,T)$, the games $\mathsf{G}(P,Q)$ and
$\mathsf{G}_1(\mathrm{Dom}(P),\mathrm{Dom}(Q))$ are of interest only
if $B\in\mathrm{Dom}(Q)$, for otherwise TWO (resp. ONE) wins every
play of $\mathsf{G}(P,Q)$
(resp. $\mathsf{G}_1(\mathrm{Dom}(P),\mathrm{Dom}(Q))$) trivially.
Although we do not include this condition in the definition of these
games (since it is not needed for most of our results), the reader
should note that, in all of the topological situations we consider in
this paper, the sets $B$ and $D$ are the same.

\end{remark}

\section{Products and singleton selections}

In this section we explore the behavior of some selective properties
under products, focusing on the game $\mathsf{G}(P,Q)$ --- and,
equivalently, on the game
$\mathsf{G}_1(\mathrm{Dom}(P),\mathrm{Dom}(Q))$.

The first step towards this goal is to define the product of two
relations. Since we are interested in applying the general results to
topological properties, we must consider a definition that allows us
to do the following in as many situations as possible: If $P$ and $Q$
are relations whose sets of dominating families correspond to a
certain topological concept on the topological spaces $X$ and $Y$
respectively, then the set of dominating families in their product
$P\otimes Q$ must allow us to describe the same topological concept in
the product space $X\times Y$. A natural way of defining such product
is the following.

\begin{defin}
\label{def.prod}

The product relation of two relations $P=(A,B,R)$ and $P'=(A',B',R')$
is defined as $P\otimes P'=(A\times A',B\times B',R\otimes R')$, where
$$R\otimes R'=\{((a,a'),(b,b'))\in(A\times
A')\times(B\times B'):a R b\textrm{ and }a' R' b'\}.$$

\end{defin}

\begin{example}
\label{ex.prod1}

Let $P=(\tau\setminus\{\emptyset\},X,\ni)$ and
$Q=(\rho\setminus\{\emptyset\},Y,\ni)$. Then $P\otimes
Q=((\tau\setminus\{\emptyset\})\times(\rho\setminus\{\emptyset\}),X\times
Y,\ni\otimes\ni)$, where $(U,V)\ni\otimes\ni(x,y)\leftrightarrow(U\ni
x\;\&\;V\ni y)$. In this case, we have
$\mathrm{Dom}(P)=\mathfrak{D}_X$, $\mathrm{Dom}(Q)=\mathfrak{D}_Y$ and
$\mathrm{Dom}(P\otimes Q)=\mathfrak{D}_{X\times Y}$.

\end{example}

\begin{example}
\label{ex.prod2}

Let $P=(X,\tau,\in)$ and $Q=(Y,\rho,\in)$. Then $P\otimes
Q=(X\times Y,\tau\times\rho,\in\otimes\in)$, where
$(x,y)\in\otimes\in(U,V)\leftrightarrow(x\in U\;\&\;y\ni V)$. In this
case, we have $\mathrm{Dom}(P)=\mathcal{O}_X$ and
$\mathrm{Dom}(Q)=\mathcal{O}_Y$. The set $\mathrm{Dom}(P\otimes Q)$
does not correspond exactly to $\mathcal{O}_{X\times Y}$, but rather
to the set of covers of the product $X\times Y$ constituted by
\emph{basic} open sets --- which, however, is enough to express
properties such as ``$X\times Y$ is Lindel\"of'' and ``$X\times Y$ is
Rothberger'' as $\mathrm{Dom}(P\otimes Q)$-Lindel\"of and
$\mathsf{S}_1(\mathrm{Dom}(P\otimes Q),\mathrm{Dom}(P\otimes Q))$
respectively.

\end{example}

We can now prove the main result of this section --- which was
described in general terms in the Introduction.

\begin{prop}
\label{ONE}

Let $P$, $P'$, $Q$ and $Q'$ be relations.
Suppose that ONE has a winning strategy in the game
$\mathsf{G}(P',Q')$.

\begin{itemize}

\item[$(a)$]
If $(\mathrm{Dom}(P),\mathrm{Dom}(Q))$-Lindel\"of holds, then
$(\mathrm{Dom}(P\otimes P'),\mathrm{Dom}(Q\otimes Q'))$-Lindel\"of
also holds.

\item[$(b)$]
If $\mathsf{S}_1(\mathrm{Dom}(P),\mathrm{Dom}(Q))$ holds, then
$\mathsf{S}_1(\mathrm{Dom}(P\otimes P'),\mathrm{Dom}(Q\otimes Q'))$
also holds.

\item[$(c)$]
If ONE has a winning strategy in the game
$\mathsf{G}(P,Q)$, then ONE has
a winning strategy in the game
$\mathsf{G}(P\otimes P',Q\otimes Q')$.

\end{itemize}

\end{prop}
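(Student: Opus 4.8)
The key insight is that we have a winning strategy for ONE in $\mathsf{G}(P',Q')$, and by the duality theorem (Theorem \ref{dual}) this is equivalent to TWO having a winning strategy $\sigma$ in $\mathsf{G}_1(\mathrm{Dom}(P'),\mathrm{Dom}(Q'))$. The plan is to use this $\sigma$ as a ``selector'' that converts, inning by inning, a dominating family for $P\otimes P'$ into a dominating family for $P'$, so that the $P$-side hypothesis can be applied to the first coordinates while $\sigma$ takes care of the second coordinates. All three parts will run on this same mechanism, so I would prove them in a uniform way.

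For part $(a)$, let $\sigma$ be TWO's winning strategy in $\mathsf{G}_1(\mathrm{Dom}(P'),\mathrm{Dom}(Q'))$ and let $\mathcal{W} \in \mathrm{Dom}(P\otimes P')$ be given. The plan is to build a countable dominating subfamily. I would run a tree/bookkeeping argument: for each finite sequence $s$ of elements of $\mathrm{Dom}(P')$ already played against $\sigma$, the ``slice'' of $\mathcal{W}$ over the second coordinate determined by $\sigma(s)$ projects to some member of $\mathrm{Dom}(P)$, to which I apply $(\mathrm{Dom}(P),\mathrm{Dom}(Q))$-Lindel\"of to extract a countable dominating subfamily for the first coordinate; the second coordinates are fed back into $\sigma$, branching over the (at most) countably many first-coordinate choices. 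Iterating over $\omega$ levels produces countably many branches, hence countably many pairs from $\mathcal{W}$; that $\sigma$ is winning guarantees the assembled second coordinates land in $\mathrm{Dom}(Q')$, and the Lindel\"of extractions guarantee the first coordinates dominate in $Q$, so the chosen countable set dominates in $Q\otimes Q'$.

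For parts $(b)$ and $(c)$ the skeleton is identical, with the bookkeeping synchronized to a single $\omega$-indexed sequence rather than a branching tree: in part $(b)$ I am handed a sequence $(\mathcal{W}_n)_{n\in\omega}$ from $\mathrm{Dom}(P\otimes P')$ and must pick one element from each; I run $\sigma$ alongside, at stage $n$ using $\mathsf{S}_1(\mathrm{Dom}(P),\mathrm{Dom}(Q))$ on the projected first coordinates and $\sigma$ on the second, so that the selected single elements have first coordinates dominating in $Q$ and second coordinates forming a play won by TWO, hence in $\mathrm{Dom}(Q')$. In part $(c)$ I am building a strategy for ONE in $\mathsf{G}(P\otimes P', Q\otimes Q')$, so I combine a winning strategy for ONE in $\mathsf{G}(P,Q)$ on the first coordinates with the response-absorbing behavior of $\sigma$ on the second coordinates, the winning condition for ONE being that the resulting second coordinates dominate in $Q'$ (by $\sigma$) and the first coordinates dominate in $Q$ (by ONE's strategy on the first factor), which together give domination in $Q\otimes Q'$.

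The main obstacle I anticipate is the coordinate-coupling in the selection: a single element $(b,b')$ of $\mathcal{W}$ must simultaneously serve the $Q$-domination on the left and the $\sigma$-driven $Q'$-domination on the right, so the two bookkeeping processes cannot be run independently and then merged --- they must be interleaved, with $\sigma$'s current demand on the second coordinate determining which slice of $\mathcal{W}$ is available for the first-coordinate selection. Making this interleaving precise, and checking that the projected first-coordinate families genuinely lie in $\mathrm{Dom}(P)$ (which uses the definition of $R\otimes R'$ together with the fact that $\mathcal{W}$ dominates in $P\otimes P'$), is where the real work lies; once that is set up, the verification that the assembled selections dominate in $Q\otimes Q'$ is routine.
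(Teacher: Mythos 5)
The skeleton you describe --- a tree of plays against an auxiliary strategy, per-node slices of $\mathcal{W}$, per-node application of the Lindel\"of/$\mathsf{S}_1$ hypothesis, and a verification that follows a branch tailored to a given $(c,c')$ --- is exactly the architecture of the paper's proof. But your central step fails as stated. Having passed to the dual game, your $\sigma$ is TWO's strategy in $\mathsf{G}_1(\mathrm{Dom}(P'),\mathrm{Dom}(Q'))$, so $\sigma(s)$ is a single element of $B'$, and the slice of $\mathcal{W}=\{(b_i,b'_i):i\in I\}$ ``over the second coordinate determined by $\sigma(s)$'' is the fiber $\{b_i:b'_i=\sigma(s)\}$; this need not belong to $\mathrm{Dom}(P)$ at all (if the second coordinates of $\mathcal{W}$ are pairwise distinct, every such fiber is a singleton). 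A slice of $\mathcal{W}$ has a dominating first-coordinate projection only when it is cut by an element $a'\in A'$, namely $\{b_i:a'R'b'_i\}\in\mathrm{Dom}(P)$ --- and elements of $A'$ are precisely what ONE's strategy in $\mathsf{G}(P',Q')$ produces. The hypothesis hands you that strategy directly; the detour through Theorem \ref{dual} is what created the type mismatch. (Relatedly, ``feeding the second coordinates back into $\sigma$'' is not a legal continuation: ONE's moves in the dual game must be members of $\mathrm{Dom}(P')$, and the countably many second coordinates attached to your extraction need not form one.) The repair is either the paper's route --- at node $s\in{}^{<\omega}\omega$ let $a'_s$ be ONE's strategic move, slice by $a'_s$, extract indices $i^s_n$ with $\{b_{i^s_n}:n\in\omega\}\in\mathrm{Dom}(Q)$, and branch over $n$, noting that each $b'_{i^s_n}$ is a legal TWO-response to $a'_s$ --- or, if you insist on the dual selector, invert your order of operations: for each $a\in A$ the family $Z_a=\{b'_i:aRb_i\}$ lies in $\mathrm{Dom}(P')$, feed these to $\sigma$, and apply the Lindel\"of hypothesis afterwards to the first coordinates of the resulting picks (this inversion is exactly how the paper proves Proposition \ref{Gfin}$(a)$).

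There is a second genuine gap in your parts $(b)$ and $(c)$. The inference ``the selected first coordinates dominate in $Q$ and the second coordinates dominate in $Q'$, which together give domination in $Q\otimes Q'$'' is a non sequitur: $\mathrm{Dom}(Q\otimes Q')$ requires, for each $(c,c')$, a \emph{single} pair $(b_n,b'_n)$ with $cTb_n$ and $c'T'b'_n$ simultaneously, and the two coordinatewise dominations may be witnessed at different indices. You name this coupling obstacle in your closing paragraph, yet your sketches discard the device that overcomes it: contrary to your claim that $(b)$ needs only ``a single $\omega$-indexed sequence rather than a branching tree,'' the paper's proofs of both $(b)$ and $(c)$ retain the full tree ${}^{<\omega}\omega$, together with partition bookkeeping ($\omega=\{m^s_k:s\in{}^{<\omega}\omega,\,k\in\omega\}$ in $(b)$; $\omega=\dot\bigcup\{L_s:s\in{}^{<\omega}\omega\}$ in $(c)$) that distributes the given families, respectively the innings, among tree nodes so that many sub-plays of $\mathsf{G}(P',Q')$ (and, in $(c)$, of $\mathsf{G}(P,Q)$) run in parallel. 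The tree is what lets one, for a given $(c,c')$, \emph{first} choose a branch along which every selected first coordinate dominates $c$ (possible because each node's extraction is $Q$-dominating), and \emph{then} invoke the winning property of the play running along that very branch to find a level whose second coordinate dominates $c'$ --- the same index then witnesses both. A single linear run fixes the selections once and for all and cannot serve all $(c,c')$ at once, so $(b)$ and $(c)$ as sketched would fail for precisely the reason you anticipated but did not neutralize.
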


\begin{proof}

Write $P=(A,B,R)$, $P'=(A',B',R')$, $Q=(C,D,T)$
and $Q'=(C',D',T')$, and let
$\sigma:\mbox{}^{<\omega}B'\rightarrow A'$ be a winning strategy for
ONE in the game $\mathsf{G}(P',Q')$.\\

$(a)$
Let $\{(b_i,b'_i):i\in I\}\in\mathrm{Dom}(P\otimes P')$ be fixed.
We will construct indexed families $\langle
a'_s:s\in\mbox{}^{<\omega}\omega\rangle$ and $\langle
i^s_n:s\in\mbox{}^{<\omega}\omega,n\in\omega\rangle$ satisfying:
\begin{itemize}
\item[$\cdot$]
$a'_s\in A'$ for all $s\in\mbox{}^{<\omega}\omega$;
\item[$\cdot$]
$i^s_n\in I$ for all $s\in\mbox{}^{<\omega}\omega$ and $n\in\omega$;
\item[$\cdot$]
$\{b_{i^s_n}:n\in\omega\}\in\mathrm{Dom}(Q)$ for all
  $s\in\mbox{}^{<\omega}\omega$; and
\item[$\cdot$]
  for each $f\in\mbox{}^\omega\omega$,
$$
\left(
a'_\emptyset,b'_{i^\emptyset_{f(0)}},
a'_{(f(0))},b'_{i^{(f(0))}_{f(1)}},
a'_{(f(0),f(1))},b'_{i^{(f(0),f(1))}_{f(2)}},
\dots,
a'_{f\upharpoonright k},b'_{i^{f\upharpoonright k}_{f(k)}},
\dots
\right)
$$
is a play of $\mathsf{G}(P',Q')$ in which ONE follows the
strategy $\sigma$.
\end{itemize}

We proceed by recursion. Suppose that $k\in\omega$ is such that
$\langle a'_t:t\in\mbox{}^{<k}\omega\rangle$ and $\langle
i^t_n:t\in\mbox{}^{<k}\omega,n\in\omega\rangle$
have already been constructed, and let $s\in\mbox{}^k\omega$ be
fixed. Define
$a'_s=\sigma\left(\left(b'_{i^{s\upharpoonright j}_{s(j)}}\right)_{j<k}\right)$.
Note that $\{b_i:i\in I\textrm{ and }a'_sR'b'_i\}\in\mathrm{Dom}(P)$;
hence, by our hypothesis, there is
$\{i^s_n:n\in\omega\}\subseteq\{i\in I:a'_sR'b'_i\}$ with
$\{b_{i^s_n}:n\in\omega\}\in\mathrm{Dom}(Q)$. This completes the
recursion.

We now claim that
$\left\{\left(b_{i^s_n},b'_{i^s_n}\right):s\in\mbox{}^{<\omega}\omega,n\in\omega\right\}\in\mathrm{Dom}(Q\otimes
Q')$.
Indeed, let $(c,c')\in C\times C'$ be arbitrary. Pick $n_k\in\omega$
recursively for $k\in\omega$ so that
$cTb_{i^{(n_j)_{j<k}}_{n_k}}$. Now
$$
\left(
a'_\emptyset,b'_{i^\emptyset_{n_0}},
a'_{(n_0)},b'_{i^{(n_0)}_{n_1}},
a'_{(n_0,n_1)},b'_{i^{(n_0,n_1)}_{n_2}},
\dots,
a'_{(n_j)_{j<k}},b'_{i^{(n_j)_{j<k}}_{n_k}},
\dots
\right)
$$
is a play $\mathsf{G}(P',Q')$ in which ONE makes use of the
winning strategy $\sigma$, so there is $k\in\omega$ such that
$c'T'b'_{i^{(n_j)_{j<k}}_{n_k}}$. Hence $(c,c')T\otimes
T'\left(b_{i^{(n_j)_{j<k}}_{n_k}},b'_{i^{(n_j)_{j<k}}_{n_k}}\right)$,
as required.\\

$(b)$
Let $(Z_n)_{n\in\omega}$ be a sequence of elements of
$\mathrm{Dom}(P\otimes P')$, and write
$Z_n=\{(b^n_i,b'^n_i):i\in I_n\}$ for each $n\in\omega$.
Now write $\omega=\{m^s_k:s\in\mbox{}^{<\omega}\omega,k\in\omega\}$
with $m^s_k=m^t_l$ only if $(s,k)=(t,l)$.
We will assign to each $s\in\mbox{}^{<\omega}\omega$
and each $k\in\omega$ an $i^s_k\in I_{m^s_k}$ according to the
following procedure.

Suppose that $h\in\omega$ is such that $i^t_k\in I_{m^t_k}$ has
already been chosen for all $t\in\mbox{}^{<h}\omega$ and all
$k\in\omega$. Now let $s\in\mbox{}^h\omega$ be fixed. Define
$a'_s=\sigma\left(\left(b'^{m^{s\upharpoonright j}_{s(j)}}_{i^{s\upharpoonright j}_{s(j)}}\right)_{j<h}\right)$.
Since $\left\{b_i^{m^s_k}:i\in
I_{m^s_k}\textrm{ and }a'_sR'b'^{m^s_k}_i\right\}\in\mathrm{Dom}(P)$ for each
$k\in\omega$, we may apply
$\mathsf{S}_1(\mathrm{Dom}(P),\mathrm{Dom}(Q))$ to obtain a sequence
$(i^s_k)_{k\in\omega}$ satisfying:
\begin{itemize}
\item[$\cdot$]
$i^s_k\in I_{m^s_k}$ for each $k\in\omega$;
\item[$\cdot$]
$a'_sR'b'^{m^s_k}_{i^s_k}$ for each $k\in\omega$; and
\item[$\cdot$]
$\left\{b_{i^s_k}^{m^s_k}:k\in\omega\right\}\in\mathrm{Dom}(Q)$.
\end{itemize}
By recursion, this concludes the definition of $i^s_k$ for
$s\in\mbox{}^{<\omega}\omega$ and $k\in\omega$. (In each step, what we
have done is: if $s=(k_0,k_1,\dots,k_{h-1})\in\mbox{}^h\omega$,
then $a'_s$ is ONE's move in a play of $\mathsf{G}(P',Q')$
whose history so far is
$$\left(a'_\emptyset,b'^{m^{\emptyset}_{k_0}}_{i^{\emptyset}_{k_0}},
a'_{(k_0)},b'^{m^{(k_0)}_{k_1}}_{i^{(k_0)}_{k_1}},
a'_{(k_0,k_1)},b'^{m^{(k_0,k_1)}_{k_2}}_{i^{(k_0,k_1)}_{k_2}},
\dots,
a'_{(k_0,k_1,\dots,k_{h-2})},b'^{m^{(k_0,k_1,\dots,k_{h-2})}_{k_{h-1}}}_{i^{(k_0,k_1,\dots,k_{h-2})}_{k_{h-1}}}
\right);$$
then, in view of the fact that
$Z_n(a')=\left\{b^n_i:i\in I_n\textrm{ and }a'R'b'^n_i\right\}$ is an
element of $\mathrm{Dom}(P)$ for all $a'\in A'$ and $n\in\omega$, we
have made use of $\mathsf{S}_1(\mathrm{Dom}(P),\mathrm{Dom}(P'))$
to select from each $I_{m^s_k}$ with $k\in\omega$ an element
$b_{i^s_k}^{m^s_k}$ of $Z_{m^s_k}(a'_s)$ in such a way that
$\left\{b_{i^s_k}^{m^s_k}:k\in\omega\right\}\in\mathrm{Dom}(Q)$.)

We now claim that
$\left\{
\left(b_{i^s_k}^{m^s_k},b'^{m^s_k}_{i^s_k}\right):
s\in\mbox{}^{<\omega}\omega,k\in\omega
\right\}\in\mathrm{Dom}(Q\otimes Q')$.
Indeed, let $(c,c')\in C\times C'$ be arbitrary.
Since $\left\{b_{i^s_k}^{m^s_k}:k\in\omega\right\}\in\mathrm{Dom}(Q)$
for every $s\in\mbox{}^{<\omega}\omega$, we may recursively pick, for
each $r\in\omega$, a $k_r\in\omega$ such that
$cTb^{m^{(k_j)_{j<r}}_{k_r}}_{i^{(k_j)_{j<r}}_{k_r}}$.
Then
$$
\left(
a'_\emptyset,b'^{m^{\emptyset}_{k_0}}_{i^{\emptyset}_{k_0}},
a'_{(k_0)},b'^{m^{(k_0)}_{k_1}}_{i^{(k_0)}_{k_1}},
a'_{(k_0,k_1)},b'^{m^{(k_0,k_1)}_{k_2}}_{i^{(k_0,k_1)}_{k_2}},
\dots,
a'_{(k_j)_{j<r}},b'^{m^{(k_j)_{j<r}}_{k_r}}_{i^{(k_j)_{j<r}}_{k_r}},
\dots
\right)
$$
is a play of $\mathsf{G}(P',Q')$ in which ONE follows the
winning strategy $\sigma$, whence there is $r\in\omega$ with
$c'T'b'^{m^{(k_j)_{j<r}}_{k_r}}_{i^{(k_j)_{j<r}}_{k_r}}$. Therefore,
$(c,c')T\otimes T'
\left(b^{m^{(k_j)_{j<r}}_{k_r}}_{i^{(k_j)_{j<r}}_{k_r}},
b'^{m^{(k_j)_{j<r}}_{k_r}}_{i^{(k_j)_{j<r}}_{k_r}}\right)$.\\

$(c)$
Let
$\varphi:\mbox{}^{<\omega}B\rightarrow A$ be a winning strategy for
ONE in $\mathsf{G}(P,Q)$. Consider a partition
$\omega=\dot\bigcup\{L_s:s\in\mbox{}^{<\omega}\omega\}$, where each
$L_s$ is infinite and $\min(L_s)>\max(\mathrm{im}(s))$ for all
$s\in\mbox{}^{<\omega}\omega\setminus\{\emptyset\}$.
For each $s\in\mbox{}^{<\omega}\omega$, write
$L_s=\{m^s_k:k\in\omega\}$ with $m^s_k<m^s_{k+1}$ for all
$k\in\omega$. A strategy for ONE in the game
$\mathsf{G}(P\otimes P',Q\otimes Q')$ will then be defined as follows.

Suppose that, at the inning $n\in\omega$ of a play of
$\mathsf{G}(P\otimes P',Q\otimes Q')$, the sequence of moves so far is
$$((a_0,a'_0),(b_0,b'_0),(a_1,a'_1),(b_1,b'_1),\dots,(a_{n-1},a'_{n-1}),(b_{n-1},b'_{n-1})).$$
Let $s\in\mbox{}^{<\omega}\omega$ and $k\in\omega$ be such that
$n=m^s_k\in L_s$. Then ONE's move in the current inning is
$(a_n,a'_n)\in A\times A'$, where
$a_n=\varphi((b_{m^s_i})_{i<k})$
and
$a'_n=\sigma((b'_{s(j)})_{j<\mathrm{dom}(s)})$. That is: ONE follows
the strategy $\varphi$ in a play of $\mathsf{G}(P,Q)$ whose previous
moves by TWO are not all of $b_0,b_1,\dots,b_{n-1}$, but only those
with indices in the set $L_s\cap n$; in a similar fashion, ONE
makes use of the strategy $\sigma$ in a play of $\mathsf{G}(P',Q')$
whose previous moves by TWO are not all of $b'_0,b'_1,\dots,b'_{n-1}$,
but only those with indices listed by the sequence $s$.

We claim that this is a winning strategy for ONE in
$\mathsf{G}(P\otimes P',Q\otimes Q')$.
Indeed, suppose that
$$((a_0,a'_0),(b_0,b'_0),(a_1,a'_1),(b_1,b'_1),\dots,(a_{n},a'_{n}),(b_{n},b'_{n}),\dots)$$
is a play of $\mathsf{G}(P\otimes P',Q\otimes Q')$ in which ONE
adopts the strategy above described,
and let $(c,c')\in C\times C'$ be arbitrary. We can construct a
strictly increasing infinite sequence
$(n_j)_{j\in\omega}\in\mbox{}^\omega\omega$ by picking recursively
$n_j\in L_{(n_i)_{i<j}}$ with $cTb_{n_j}$ for each $j\in\omega$ ---
such an $n_j$ must exist, for otherwise
$$(a_{m^{(n_i)_{i<j}}_0},b_{m^{(n_i)_{i<j}}_0},a_{m^{(n_i)_{i<j}}_1},b_{m^{(n_i)_{i<j}}_1},\dots,a_{m^{(n_i)_{i<j}}_k},b_{m^{(n_i)_{i<j}}_k},\dots)$$
would be a play of $\mathsf{G}(P,Q)$ in which ONE plays according to
the winning strategy $\varphi$ and loses.
Now
$$
(a'_{n_0},b'_{n_0},a'_{n_1},b'_{n_1},\dots,a'_{n_j},b'_{n_j},\dots)
$$
is a play of $\mathsf{G}(P',Q')$ in which ONE employs the winning
strategy $\sigma$, whence $c'T'b'_{n_k}$ must hold for some
$k\in\omega$.
Thus $(c,c')T\otimes T'(b_{n_k},b'_{n_k})$, as required.
\end{proof}

In view of Theorem \ref{dual}, Proposition \ref{ONE} can be restated
as follows.

\begin{prop}
\label{G1}

Let $P$, $P'$, $Q$ and $Q'$ be relations.
Suppose that TWO has a winning strategy in the game
$\mathsf{G}_1(\mathrm{Dom}(P'),\mathrm{Dom}(Q'))$.

\begin{itemize}

\item[$(a)$]
If $(\mathrm{Dom}(P),\mathrm{Dom}(Q))$-Lindel\"of holds, then
$(\mathrm{Dom}(P\otimes P'),\mathrm{Dom}(Q\otimes Q'))$-Lindel\"of
also holds.

\item[$(b)$]
If $\mathsf{S}_1(\mathrm{Dom}(P),\mathrm{Dom}(Q))$ holds, then
$\mathsf{S}_1(\mathrm{Dom}(P\otimes P'),\mathrm{Dom}(Q\otimes Q'))$
also holds.

\item[$(c)$]
If TWO has a winning strategy in the game
$\mathsf{G}_1(\mathrm{Dom}(P),\mathrm{Dom}(Q))$, then TWO has
a winning strategy in the game
$\mathsf{G}_1(\mathrm{Dom}(P\otimes P'),\mathrm{Dom}(Q\otimes Q'))$.

\end{itemize}

\end{prop}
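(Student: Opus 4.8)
The plan is to observe that Proposition~\ref{G1} is not a theorem requiring independent proof, but rather a pure restatement of Proposition~\ref{ONE} under the duality dictionary supplied by Theorem~\ref{dual}. So the entire argument is a translation exercise, and the work consists of matching hypotheses and conclusions across the duality.

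First I would invoke Theorem~\ref{dual} to handle the hypothesis: the statement ``TWO has a winning strategy in $\mathsf{G}_1(\mathrm{Dom}(P'),\mathrm{Dom}(Q'))$'' is, by duality, equivalent to ``ONE has a winning strategy in $\mathsf{G}(P',Q')$,'' which is precisely the standing hypothesis of Proposition~\ref{ONE}. Thus the hypotheses of the two propositions coincide. Next I would check that the conclusions of parts $(a)$ and $(b)$ are literally identical in the two statements: both assert that $(\mathrm{Dom}(P),\mathrm{Dom}(Q))$-Lindel\"of (resp.\ $\mathsf{S}_1(\mathrm{Dom}(P),\mathrm{Dom}(Q))$) propagates to the product relations. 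These conditions make no reference to games at all, so nothing needs to be translated for them; they transfer verbatim. Hence parts $(a)$ and $(b)$ follow immediately by rereading Proposition~\ref{ONE} with the hypothesis rephrased.

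For part $(c)$, I would again apply Theorem~\ref{dual} twice, once to each of the two game-theoretic clauses. The additional hypothesis of Proposition~\ref{G1}$(c)$, namely that TWO wins $\mathsf{G}_1(\mathrm{Dom}(P),\mathrm{Dom}(Q))$, is dual to ONE winning $\mathsf{G}(P,Q)$, which is exactly the extra hypothesis in Proposition~\ref{ONE}$(c)$; and the desired conclusion, that TWO wins $\mathsf{G}_1(\mathrm{Dom}(P\otimes P'),\mathrm{Dom}(Q\otimes Q'))$, is dual to ONE winning $\mathsf{G}(P\otimes P',Q\otimes Q')$, which is the conclusion delivered by Proposition~\ref{ONE}$(c)$. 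The only subtlety worth confirming is that the duality applies cleanly to product relations, i.e.\ that $\mathrm{Dom}(P\otimes P')$ and $\mathrm{Dom}(Q\otimes Q')$ are themselves of the form $\mathrm{Dom}(\,\cdot\,)$ so that Theorem~\ref{dual} is applicable to $\mathsf{G}(P\otimes P',Q\otimes Q')$; this is automatic, since $P\otimes P'$ and $Q\otimes Q'$ are relations by Definition~\ref{def.prod}.

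I do not anticipate a genuine obstacle here, since the content was already established in Proposition~\ref{ONE}; the ``hard part'' is merely bookkeeping, namely being careful that the equivalences provided by Theorem~\ref{dual} run in the correct direction in each clause (TWO-winning for $\mathsf{G}_1$ corresponds to ONE-winning for $\mathsf{G}$, never TWO-to-TWO). Accordingly, the proof can be stated in a single sentence: apply Theorem~\ref{dual} to each game-theoretic hypothesis and conclusion appearing in Proposition~\ref{ONE}, and observe that the non-game conditions in $(a)$ and $(b)$ are unchanged.
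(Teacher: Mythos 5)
Your proposal is correct and matches the paper exactly: the paper introduces Proposition~\ref{G1} with the sentence ``In view of Theorem~\ref{dual}, Proposition~\ref{ONE} can be restated as follows,'' i.e.\ it treats the result as precisely the duality translation you describe, with no separate proof. Your bookkeeping of the direction of the duality (TWO-winning in $\mathsf{G}_1$ corresponding to ONE-winning in $\mathsf{G}$, and the product relations $P\otimes P'$, $Q\otimes Q'$ being relations so that Theorem~\ref{dual} applies to them) is exactly what is needed.
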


We now present some instances of the previous propositions.
The following list of results is not meant to 
exhaust the consequences that can be obtained from Propositions
\ref{ONE} and \ref{G1}, but rather 
to illustrate some of the contexts to which they can be applied.

\begin{corol}
\label{rothberger}

If TWO has a winning strategy in the Rothberger game on a
topological space $X$, then $X$ is productively Rothberger.

\end{corol}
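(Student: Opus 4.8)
The plan is to obtain Corollary \ref{rothberger} as a direct application of Proposition \ref{G1}(b), the only genuine work being the translation between the abstract statement about dominating families and the classical Rothberger property on a product space.

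First I would record the correct choice of relations. Since TWO is assumed to have a winning strategy in the Rothberger game on $X$, and this game is by definition $\mathsf{G}_1(\mathcal{O}_X,\mathcal{O}_X)$, I set $P'=Q'=(X,\tau,\in)$, so that $\mathrm{Dom}(P')=\mathrm{Dom}(Q')=\mathcal{O}_X$ and the hypothesis of Proposition \ref{G1} reads exactly as ``TWO has a winning strategy in $\mathsf{G}_1(\mathrm{Dom}(P'),\mathrm{Dom}(Q'))$''. Next, to verify productivity, I fix an arbitrary Rothberger space $Y$ and set $P=Q=(Y,\rho,\in)$, so that $\mathrm{Dom}(P)=\mathrm{Dom}(Q)=\mathcal{O}_Y$; since $Y$ is Rothberger, $\mathsf{S}_1(\mathcal{O}_Y,\mathcal{O}_Y)=\mathsf{S}_1(\mathrm{Dom}(P),\mathrm{Dom}(Q))$ holds.

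With these choices, Proposition \ref{G1}(b) immediately yields $\mathsf{S}_1(\mathrm{Dom}(P\otimes P'),\mathrm{Dom}(Q\otimes Q'))$, and it remains only to read this off as the Rothberger property of $X\times Y$. Here $P\otimes P'=Q\otimes Q'=(Y\times X,\rho\times\tau,\in\otimes\in)$, and by Example \ref{ex.prod2} (with the roles of the two factors interchanged) the family $\mathrm{Dom}(P\otimes P')$ consists of the covers of $Y\times X$ by basic open sets; the point of that example is precisely that $\mathsf{S}_1$ for such basic covers is equivalent to $\mathsf{S}_1(\mathcal{O}_{Y\times X},\mathcal{O}_{Y\times X})$, i.e.\ to $Y\times X$ being Rothberger. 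Finally, since $Y\times X$ is homeomorphic to $X\times Y$ and the Rothberger property is a topological invariant, $X\times Y$ is Rothberger; as $Y$ was an arbitrary Rothberger space, $X$ is productively Rothberger.

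The step I expect to require the most care is this last translation: $\mathrm{Dom}(P\otimes P')$ does not literally equal $\mathcal{O}_{Y\times X}$ but only the collection of basic (box) open covers, so the argument quietly relies on the standard fact---already flagged in Example \ref{ex.prod2}---that the Rothberger property may be tested using only basic open covers. Once that reduction is granted, nothing else in the proof is more than bookkeeping, since all of the combinatorial content has been packaged into Proposition \ref{G1}.
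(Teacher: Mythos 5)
Your proposal is correct and matches the paper's proof exactly: the paper applies Proposition \ref{G1}$(b)$ with the same choices $P'=Q'=(X,\tau,\in)$ and $P=Q=(Y,\rho,\in)$, leaving implicit the basic-cover translation that Example \ref{ex.prod2} flags. The only difference is that you spell out this translation (and the harmless swap of factors via homeomorphism), which the paper delegates to that example without comment.
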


\begin{proof}

Let $Y$ be a Rothberger space. Now apply Proposition
\ref{G1}$(b)$ with $P'=Q'=(X,\tau,\in)$ and
$P=Q=(Y,\rho,\in)$.
\end{proof}

It is worth comparing Corollary \ref{rothberger} with Theorem 11(3) of
\cite{bab.prod}, in which a product of two metric spaces is proven to
be Rothberger under a weaker hypothesis on one of the spaces and a
stronger hypothesis on the other.

The next result was first stated in \cite{telg?}; see also
Theorem 3.1 of \cite{yajima}.\footnote{We thank Piotr Szewczak for
  bringing the paper \cite{yajima} to our attention.}

\begin{corol}[Telg\'arsky \cite{telg?}]
\label{ONE-p-o}

The property ``ONE has a winning strategy in the point-open
game'' (equivalently, ``TWO has a winning strategy in the
Rothberger game'') is preserved under finite products.

\end{corol}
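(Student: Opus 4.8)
The plan is to reduce everything to two factors and then invoke Proposition \ref{ONE}$(c)$, the only remaining issue being the passage from the ``box'' game to the genuine point-open game on the product. First I would observe that, since the point-open game is a homeomorphism invariant and finite products are associative, it suffices by induction to treat the product of two spaces: assuming ONE has a winning strategy in the point-open game on each of $X$ and $Y$, I must produce a winning strategy for ONE in the point-open game on $X\times Y$. Writing $P=(X,\tau,\in)$ and $P'=(Y,\rho,\in)$, the point-open games on $X$ and on $Y$ are $\mathsf{G}(P,P)$ and $\mathsf{G}(P',P')$ respectively (first example of Section 3), so the hypothesis says ONE has a winning strategy in both. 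Applying Proposition \ref{ONE}$(c)$ with $Q=P$ and $Q'=P'$ then yields a winning strategy $\Theta$ for ONE in $\mathsf{G}(P\otimes P',P\otimes P')$.

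Next I would translate this back into topology. By Example \ref{ex.prod2}, $P\otimes P'=(X\times Y,\tau\times\rho,\in\otimes\in)$ and $\mathrm{Dom}(P\otimes P')$ is the collection of covers of $X\times Y$ by \emph{basic} open boxes $U\times V$. Thus $\mathsf{G}(P\otimes P',P\otimes P')$ is exactly the point-open game on $X\times Y$, except that TWO must answer each point with a basic open box rather than an arbitrary open neighborhood. The main (and essentially only) point of the proof is to lift $\Theta$ across this discrepancy. I would do so by having ONE play the genuine point-open game while simulating a play of the box game: ONE's points are dictated by $\Theta$ applied to the simulated box-history; when TWO answers a point $(x_n,y_n)$ with an arbitrary open $W_n\ni(x_n,y_n)$, ONE chooses a basic box with $(x_n,y_n)\in U_n\times V_n\subseteq W_n$ (possible since such boxes form a base for $\tau\times\rho$), records $U_n\times V_n$ as TWO's move in the simulated box game, and continues. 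Because $\Theta$ is winning, $\{U_n\times V_n:n\in\omega\}$ covers $X\times Y$; and since $U_n\times V_n\subseteq W_n$ for every $n$, the family $\{W_n:n\in\omega\}$ covers $X\times Y$ as well, so ONE wins the genuine game.

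The step I expect to be the only real obstacle is this last lifting argument, since restricting TWO to basic open sets makes the box game strictly easier for ONE than the genuine point-open game; the argument works precisely because enlarging each box $U_n\times V_n$ to the open set $W_n\supseteq U_n\times V_n$ cannot destroy a cover, so a winning box-play for ONE forces a winning genuine play. Finally, the equivalence with the Rothberger-game formulation is immediate from the duality in Corollary \ref{cor.rothberger}.
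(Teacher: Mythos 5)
Your proposal is correct and takes essentially the same approach as the paper: the paper's own proof is exactly your second step, applying Proposition \ref{ONE}$(c)$ with $P=Q=(X,\tau,\in)$ and $P'=Q'=(Y,\rho,\in)$, with the passage from the basic-box game to the genuine point-open game left implicit (relying on the remark in Example \ref{ex.prod2} that basic covers suffice). The additional details you supply --- the reduction by induction to two factors and the shrink-to-a-box simulation lifting $\Theta$ to the genuine game --- are precisely the routine steps the paper omits, not a different argument.
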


\begin{proof}

Let $X$ and $Y$ be spaces on which ONE has a winning strategy
in the point-open game. Apply Proposition
\ref{ONE}$(c)$ with $P=Q=(X,\tau,\in)$ and
$P'=Q'=(Y,\rho,\in)$.
\end{proof}

The next result is also a consequence of Theorem 3.1 of \cite{yajima}.
By the \emph{compact-open game} on a topological space $X$ we mean the
following game: In each inning $n\in\omega$, ONE chooses a compact
subset $C_n$ of $X$, and then TWO picks an open set $U_n$ with
$C_n\subseteq U_n$; ONE wins if $X=\bigcup_{n\in\omega}U_n$, and loses
otherwise.

\begin{corol}[Yajima \cite{yajima}]
\label{ONE-c-o}

The property ``ONE has a winning strategy in the compact-open
game''
is preserved under finite products.

\end{corol}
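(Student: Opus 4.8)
The plan is to reduce Corollary~\ref{ONE-c-o} to Proposition~\ref{ONE}$(c)$ by exhibiting the compact-open game on a space as an instance of the game $\mathsf{G}(P,Q)$ for a suitable relation, exactly in the manner of the previous corollary. First I would identify the correct relation: since TWO in the compact-open game chooses an open set $U$ containing a compact set $C$, the natural relation is $P=(K(X),\tau,\supseteq)$, where $C\,(\supseteq)\,U$ is read as $C\subseteq U$ --- that is, $U$ dominates $C$ precisely when $C\subseteq U$. I would then check that $\mathrm{Dom}(P)$ is the family $\mathcal{K}_X$ of $k$-covers of $X$ (as listed in Example~\ref{ex.Dom}), so that ONE wins a play of $\mathsf{G}(P,P)$ exactly when the open sets chosen by TWO form a $k$-cover, which includes the covering condition $X=\bigcup_{n}U_n$ demanded in the compact-open game. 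The key preliminary observation is therefore that the compact-open game on $X$ is \emph{equivalent}, in the sense of Definition~\ref{equivalent.games}, to $\mathsf{G}(P,P)$ with this $P$.

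The main subtlety I anticipate is that the winning condition in the compact-open game as stated asks only that $X=\bigcup_{n}U_n$ (an ordinary open cover), whereas $\mathrm{Dom}(P)$ for $P=(K(X),\tau,\supseteq)$ consists of $k$-covers, which is formally stronger. So the hard part will be verifying that these two games are genuinely equivalent --- i.e.\ that a winning strategy for a player in one transfers to a winning strategy in the other. I expect this to go through because, in the compact-open game, a strategy for ONE can always be arranged to enumerate (a cofinal family of) compact sets and thereby force TWO's responses to cover every compact set, not merely every point; conversely any play producing a $k$-cover certainly produces an open cover. I would state and prove this equivalence as the single nontrivial step, being careful to handle the direction where one must upgrade a point-cover into a $k$-cover. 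Once the equivalence is in hand, the rest is immediate.

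With the relation fixed, the proof of the corollary mirrors that of Corollary~\ref{ONE-p-o}. Let $X$ and $Y$ be spaces on which ONE has a winning strategy in the compact-open game. By the equivalence just established, ONE has a winning strategy in $\mathsf{G}(P,P)$ with $P=(K(X),\tau,\supseteq)$, and likewise in $\mathsf{G}(P',P')$ with $P'=(K(Y),\rho,\supseteq)$. I would then apply Proposition~\ref{ONE}$(c)$ with $Q=P$ and $Q'=P'$ to conclude that ONE has a winning strategy in $\mathsf{G}(P\otimes P',P\otimes P')$. The final step is to recognize this product game as (equivalent to) the compact-open game on $X\times Y$; here one uses that the compact subsets and basic open sets of $X\times Y$ are captured by $P\otimes P'$ --- concretely, that $\mathrm{Dom}(P\otimes P')$ corresponds to $k$-covers of $X\times Y$ by basic open boxes, which suffices to express the covering-of-compacta condition on the product, in the same spirit as the remark in Example~\ref{ex.prod2}. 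This product-space identification, together with the single-space equivalence of the first paragraph, is where all the genuine work lies; everything else is a direct appeal to Proposition~\ref{ONE}$(c)$.
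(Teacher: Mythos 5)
There is a genuine gap, and it comes from overlooking a degree of freedom that the framework provides precisely for this situation: in the game $\mathsf{G}(P,Q)$ of Definition \ref{def.Pgame}, the relation $P$ governing the legality of moves and the relation $Q$ governing the winning condition need not coincide (Example \ref{game.tkachuk} is exactly such an instance). The compact-open game on $X$ is \emph{literally} the game $\mathsf{G}(P,Q)$ with $P=(K(X),\tau,\subseteq)$ and $Q=(X,\tau,\in)$: TWO must respond to a compact set with an open superset, and ONE wins if and only if TWO's responses lie in $\mathrm{Dom}(Q)=\mathcal{O}_X$, i.e., cover $X$. The paper's proof applies Proposition \ref{ONE}$(c)$ with these $P,Q$ and $P'=(K(Y),\rho,\subseteq)$, $Q'=(Y,\rho,\in)$, and then needs only the observation that whenever $C_1\in K(X)$, $C_2\in K(Y)$ and $W\supseteq C_1\times C_2$ is open in $X\times Y$, there is a box $U_1\times U_2$ with $C_1\times C_2\subseteq U_1\times U_2\subseteq W$ (the Wallace property) to translate between arbitrary open responses on the product and the box moves of $\mathsf{G}(P\otimes P',Q\otimes Q')$. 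No equivalence lemma about $k$-covers is needed anywhere.

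By instead setting $P=Q$ with $\mathrm{Dom}(P)=\mathcal{K}_X$, you have made the winning condition for ONE strictly stronger ($k$-cover rather than cover), and your entire plan rests on the unproved claim that a winning strategy for ONE in the compact-open game upgrades to one for the $k$-cover condition. Your sketch of this step does not work: that TWO's responses ``cover every compact set'' is automatic from covering $X$ and is not the issue --- a $k$-cover requires each compact set to be contained in a \emph{single} response, which is a different and much stronger demand. Nor can ONE in general ``enumerate a cofinal family of compact sets'': already for $X=\mathbb{Q}$, where ONE trivially wins the compact-open game (indeed the point-open game), no countable family of compact sets is cofinal in $K(\mathbb{Q})$ under inclusion, and a winning strategy's moves are adaptive and only guarantee point-covering in any case. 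Whether ``ONE has a winning strategy in the compact-open game'' implies ``ONE has a winning strategy in the $k$-cover variant'' is a substantive question that nothing in the paper settles and that is at least as hard as the corollary you are trying to prove; your argument cannot be completed without it, whereas the $P\neq Q$ instantiation above makes it unnecessary. (Your closing product-space identification via boxes is in the right spirit and is essentially the paper's Wallace-property remark, but it cannot rescue the missing single-space step.)
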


\begin{proof}

Let $X$ and $Y$ be spaces on which ONE has a winning strategy
in the compact-open game. Apply Proposition
\ref{ONE}$(c)$ with $P=(K(X),\tau,\subseteq)$, $Q=(X,\tau,\in)$,
$P'=(K(Y),\rho,\subseteq)$ and $Q'=(Y,\rho,\in)$. The result
follows from the observation that, whenever $C_1\in K(X)$, $C_2\in
K(Y)$ and $W$ is an open subset of $X\times Y$ with $C_1\times
C_2\subseteq W$, there exist $U_1\in\tau$ and $U_2\in\rho$ satisfying
$C_1\times C_2\subseteq U_1\times U_2\subseteq W$.
\end{proof}

\begin{corol}
\label{omega-1}

Let $X$ be a topological space on which TWO has a winning
strategy in the game $\mathsf G_1(\Omega,\Omega)$. Then:
\begin{itemize}
\item[$(a)$]
the product $X\times Y$ satisfies $\mathsf{S}_1(\Omega,\Omega)$ for
every topological space $Y$ satisfying $\mathsf{S}_1(\Omega,\Omega)$;
\item[$(a')$]
if $Y$ is a topological space that is Rothberger in every finite
power, then the product $X\times Y$ is Rothberger in every finite
power;
\item[$(b)$]
if TWO has a winning strategy in the game $\mathsf
G_1(\Omega,\Omega)$ on a topological space $Y$, then TWO has
a winning strategy in $\mathsf G_1(\Omega,\Omega)$ on the product
$X\times Y$.
\end{itemize}
\end{corol}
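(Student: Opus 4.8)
The plan is to realize all three parts as applications of Proposition \ref{G1} to the relation generating $\Omega$ (see Example \ref{ex.Dom}), and then to bridge the small gap between $\mathrm{Dom}(P\otimes P')$ and the genuine $\omega$-covers of the product. Concretely, I set $P'=Q'=([X]^{<\aleph_0},\tau,\subseteq)$ and $P=Q=([Y]^{<\aleph_0},\rho,\subseteq)$, so that $\mathrm{Dom}(P')=\mathrm{Dom}(Q')=\Omega_X$ and $\mathrm{Dom}(P)=\mathrm{Dom}(Q)=\Omega_Y$, and the games $\mathsf G_1(\Omega,\Omega)$ on $X$ and on $Y$ are precisely $\mathsf G_1(\mathrm{Dom}(P'),\mathrm{Dom}(Q'))$ and $\mathsf G_1(\mathrm{Dom}(P),\mathrm{Dom}(Q))$. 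As in Example \ref{ex.prod2}, the set $\mathrm{Dom}(P\otimes P')$ is not quite $\Omega_{X\times Y}$: its members are the families of open rectangles $U\times V$ that catch every pair $(F,G)\in[X]^{<\aleph_0}\times[Y]^{<\aleph_0}$, i.e.\ the $\omega$-covers of $X\times Y$ consisting of \emph{basic} open sets. (Here one uses that the boxes $F\times G$ are cofinal among the finite subsets of $X\times Y$, since any finite $H$ lies in $\pi_X(H)\times\pi_Y(H)$.)

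The key bridging lemma is a finite instance of the tube (Wallace) lemma: if $F\subseteq X$ and $G\subseteq Y$ are finite and $F\times G\subseteq O$ with $O$ open in $X\times Y$, then there are open $U\supseteq F$ and $V\supseteq G$ with $F\times G\subseteq U\times V\subseteq O$. Using this, every $\omega$-cover $\mathcal O$ of $X\times Y$ can be refined to the basic $\omega$-cover $\mathcal W(\mathcal O)=\{U\times V: U\times V\subseteq O\text{ for some }O\in\mathcal O\}$: given finite $H$, apply the lemma to the box $\pi_X(H)\times\pi_Y(H)$ and to a member of $\mathcal O$ catching that box to find a rectangle of $\mathcal W(\mathcal O)$ containing $H$. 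Crucially, each member of $\mathcal W(\mathcal O)$ sits inside a member of $\mathcal O$, so any selection from the refined covers can be \emph{lifted} back to a selection from the original covers without destroying the $\omega$-cover property.

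With this in hand: for $(a)$, the hypotheses give that TWO wins $\mathsf G_1(\mathrm{Dom}(P'),\mathrm{Dom}(Q'))$ and that $\mathsf S_1(\mathrm{Dom}(P),\mathrm{Dom}(Q))$ holds, so Proposition \ref{G1}$(b)$ yields $\mathsf S_1(\mathrm{Dom}(P\otimes P'),\mathrm{Dom}(Q\otimes Q'))$, i.e.\ $\mathsf S_1$ for basic $\omega$-covers of $X\times Y$. Given arbitrary $\omega$-covers $(\mathcal O_n)_n$ of $X\times Y$, I would refine each to $\mathcal W(\mathcal O_n)$, apply the basic statement to obtain a selection forming a basic $\omega$-cover, and lift it to a selection from the $\mathcal O_n$; this witnesses $\mathsf S_1(\Omega_{X\times Y},\Omega_{X\times Y})$. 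Part $(b)$ is identical except that Proposition \ref{G1}$(c)$ is used: from a winning strategy for TWO in the basic game one builds a winning strategy for TWO in $\mathsf G_1(\Omega_{X\times Y},\Omega_{X\times Y})$ by having TWO secretly replace ONE's move $\mathcal O_n$ with $\mathcal W(\mathcal O_n)$, consult the given strategy, and lift its rectangle response to a member of $\mathcal O_n$ containing it. Finally, $(a')$ follows from $(a)$ together with the classical equivalence ``$\mathsf S_1(\Omega_Z,\Omega_Z)$ holds if and only if $Z$ is Rothberger in every finite power'', applied both to $Z=Y$ and to $Z=X\times Y$.

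I expect the only real obstacle to be this passage between $\mathrm{Dom}(P\otimes P')$ and $\Omega_{X\times Y}$ — that is, verifying that the framework's product, which only produces covers by basic open sets, nonetheless captures the genuine selective property $\mathsf S_1(\Omega,\Omega)$ (and its game) on the product. The tube lemma for finite sets, valid in arbitrary topological spaces and hence compatible with the paper's blanket absence of separation hypotheses, is exactly what makes the reduction go through; the remaining bookkeeping (identifying pairs $(V,U)$ with rectangles $U\times V$ under $X\times Y\cong Y\times X$, and checking that $\mathcal W(\mathcal O_n)$ is a legal move) is routine.
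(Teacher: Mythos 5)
Your proposal is correct and follows exactly the paper's route: the same choice $P'=Q'=([X]^{<\aleph_0},\tau,\subseteq)$, $P=Q=([Y]^{<\aleph_0},\rho,\subseteq)$ in Proposition \ref{G1}$(b)$--$(c)$, the same refinement of arbitrary $\omega$-covers of $X\times Y$ to $\omega$-covers by basic rectangles, and the same appeal to Sakai's theorem for $(a')$. The only difference is one of detail: the paper merely asserts the refinement exists, whereas you justify it via the finite tube lemma and spell out the lifting of selections (and of TWO's strategy) back to the original covers --- all of which is correct.
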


\begin{proof}

By applying Proposition \ref{G1}$(b)$-$(c)$ with
$P'=Q'=([X]^{<\aleph_0},\tau,\subseteq)$ and
$P=Q=([Y]^{<\aleph_0},\rho,\subseteq)$,
we obtain $(a)$ and $(b)$ --- note that every $\omega$-cover
$\mathcal{U}$ of the product $X\times Y$ has an open refinement
$\mathcal{V}$ that is also an $\omega$-cover and such that every
element of $\mathcal{V}$ is a basic open set of the form $U\times V$.
Now $(a')$ follows from the fact that a topological space
satisfies $\mathsf S_1(\Omega,\Omega)$ if and only if all of its
finite powers are Rothberger \cite{sakai}.
\end{proof}

For the next result, recall that a space $X$ is
\emph{weakly Lindel\"of} \cite{frolik} if
$(\mathcal{O}_X,\mathcal{D}_X)$-Lindel\"of holds. Furthermore, $X$ is
\emph{weakly Rothberger} \cite{daniels} if
$\mathsf{S}_1(\mathcal{O}_X,\mathcal{D}_X)$ holds.

\begin{corol}
\label{weakly.rothberger}

Let $X$ be a topological space on which TWO has a winning
strategy in the game $\mathsf G_1(\mathcal O,\mathcal D)$. Then:
\begin{itemize}
\item[$(a)$]
the topological product $X\times Y$ is weakly Lindel\"of whenever
$Y$ is a weakly Lindel\"of space;
\item[$(b)$]
the topological product $X\times Y$ is weakly Rothberger whenever $Y$
is a weakly Rothberger space;
\item[$(c)$]
TWO has a winning strategy in the game $\mathsf G_1(\mathcal
O,\mathcal D)$ on the product $X\times Y$ whenever TWO has a winning
strategy in $\mathsf G_1(\mathcal O,\mathcal D)$ on the topological
space $Y$.\footnote{A result similar to Corollary
\ref{weakly.rothberger}$(c)$ was independently obtained in \cite{3}
under a stronger assumption.}
\end{itemize}
\end{corol}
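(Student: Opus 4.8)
The plan is to deduce all three parts from Proposition \ref{G1}, taking the relations that encode $X$ to be the primed ones and those encoding $Y$ to be the unprimed ones. Explicitly, set $P'=(X,\tau,\in)$ and $Q'=(\tau\setminus\{\emptyset\},\tau,T)$ with $UTV\leftrightarrow U\cap V\neq\emptyset$, and $P=(Y,\rho,\in)$ and $Q=(\rho\setminus\{\emptyset\},\rho,S)$ with $VSW\leftrightarrow V\cap W\neq\emptyset$. By the table in Example \ref{ex.Dom} we have $\mathrm{Dom}(P')=\mathcal{O}_X$, $\mathrm{Dom}(Q')=\mathcal{D}_X$, $\mathrm{Dom}(P)=\mathcal{O}_Y$ and $\mathrm{Dom}(Q)=\mathcal{D}_Y$, so the standing hypothesis that TWO has a winning strategy in $\mathsf{G}_1(\mathcal{O},\mathcal{D})$ on $X$ is exactly the hypothesis of Proposition \ref{G1}.

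The one point that genuinely needs checking is the identification of the product families, as in Example \ref{ex.prod2}. Here $\mathrm{Dom}(P\otimes P')$ is not all of $\mathcal{O}_{X\times Y}$ but the family of covers of $X\times Y$ by basic open rectangles, and I would verify that $\mathrm{Dom}(Q\otimes Q')$ is exactly the family of collections $\mathcal{W}$ of open rectangles with $\overline{\bigcup\mathcal{W}}=X\times Y$: unwinding Definition \ref{def.prod}, $\mathcal{W}$ dominates in $Q\otimes Q'$ iff every nonempty rectangle $V\times U$ meets some member of $\mathcal{W}$, and since the rectangles form a base this is precisely density of $\bigcup\mathcal{W}$. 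Granting this, Proposition \ref{G1}$(a)$ gives $(\mathrm{Dom}(P\otimes P'),\mathrm{Dom}(Q\otimes Q'))$-Lindel\"of and Proposition \ref{G1}$(b)$ gives $\mathsf{S}_1(\mathrm{Dom}(P\otimes P'),\mathrm{Dom}(Q\otimes Q'))$.

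To obtain $(a)$ and $(b)$ I would pass from these basic-cover statements to the genuine weak covering properties. Given an open cover of $X\times Y$ (or a sequence of them), refine it to the cover by all open rectangles contained in some member; this refinement lies in $\mathrm{Dom}(P\otimes P')$ because the rectangles form a base. Applying the conclusions above produces a countable subfamily, respectively a one-per-cover selection, of rectangles whose union is dense; replacing each chosen rectangle by a member of the original cover that contains it only enlarges the union, so density is preserved, yielding that $X\times Y$ is weakly Lindel\"of in case $(a)$ and weakly Rothberger in case $(b)$. For $(c)$, Proposition \ref{G1}$(c)$ supplies a winning strategy $\sigma$ for TWO in the basic-cover game $\mathsf{G}_1(\mathrm{Dom}(P\otimes P'),\mathrm{Dom}(Q\otimes Q'))$, and I would convert it into a winning strategy in $\mathsf{G}_1(\mathcal{O}_{X\times Y},\mathcal{D}_{X\times Y})$: whenever ONE plays an open cover, TWO internally refines it to the rectangle cover, feeds the resulting sequence to $\sigma$, and plays a member of ONE's cover containing the rectangle that $\sigma$ returns. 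Since $\sigma$'s outputs have dense union and TWO's actual moves contain them, TWO wins.

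The main obstacle, though a mild one, is precisely this discrepancy between $\mathrm{Dom}(P\otimes P')$ and $\mathcal{O}_{X\times Y}$ flagged in Example \ref{ex.prod2}; everything reduces to checking that refining to basic covers and then enlarging selected rectangles back into the original cover respects all three conclusions. The density reformulation of $\mathrm{Dom}(Q\otimes Q')$ is what makes this routine, since density is tested on basic open sets and enlarging the union can never destroy it.
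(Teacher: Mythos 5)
Your proposal is correct and is essentially the paper's own proof: the paper's argument is precisely the one-line application of Proposition~\ref{G1} with $P'=(X,\tau,\in)$, $Q'=(\tau\setminus\{\emptyset\},\tau,T)$, $P=(Y,\rho,\in)$ and $Q=(\rho\setminus\{\emptyset\},\rho,T')$, exactly your choice of relations. The additional bookkeeping you carry out --- identifying $\mathrm{Dom}(Q\otimes Q')$ with dense-union families of rectangles, refining open covers to basic rectangle covers, and enlarging the selected rectangles back (including the deterministic refinement inside TWO's strategy in part $(c)$) --- is left implicit in the paper, in the spirit of the remark in Example~\ref{ex.prod2}, and you verify it correctly.
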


\begin{proof}

Apply Proposition \ref{G1} with
$P'=(X,\tau,\in)$, $Q'=(\tau\setminus\{\emptyset\},\tau,T)$,
$P=(Y,\rho,\in)$ and $Q=(\rho\setminus\{\emptyset\},\rho,T')$,
where $T=\{(U,V)\in(\tau\setminus\{\emptyset\})\times\tau:U\cap
V\neq\emptyset\}$ and
$T'=\{(U,V)\in(\rho\setminus\{\emptyset\})\times\rho:U\cap
V\neq\emptyset\}$.
\end{proof}


The next corollary gives us an application to a nontopological context
--- namely, variations on the countable chain condition for partial
orders (see e.g. \cite[Chapter III]{kunen} for further details on the
concepts involved in this result).

\begin{corol}
\label{po-ccc}

Let $\mathbb{P}$ be a partial order and
$\mathcal{PD}_{\mathbb{P}}=\{W\subseteq\mathbb{P}:W$ is predense in
$\mathbb{P}\}$. Suppose that TWO has a winning strategy in the
game
$\mathsf{G}_1(\mathcal{PD}_{\mathbb{P}},\mathcal{PD}_{\mathbb{P}})$. Then:
\begin{itemize}
\item[$(a)$]
the partial order $\mathbb{P}\times\mathbb{Q}$ is c.c.c. for every
c.c.c. partial order $\mathbb{Q}$;
\item[$(b)$]
the partial order $\mathbb{P}\times\mathbb{Q}$ satisfies
$\mathsf{S}_1(\mathcal{PD},\mathcal{PD})$ for every partial order
$\mathbb{Q}$ satisfying $\mathsf{S}_1(\mathcal{PD},\mathcal{PD})$.
\end{itemize}
\end{corol}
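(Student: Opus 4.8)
The plan is to view predensity as a dominating condition and then invoke Proposition \ref{G1}. First I would associate with any partial order $\mathbb{P}$ the relation $P_{\mathbb{P}}=(\mathbb{P},\mathbb{P},R_{\mathbb{P}})$, where $p\,R_{\mathbb{P}}\,w$ means that $p$ and $w$ are compatible in $\mathbb{P}$; this is a genuine relation in the sense of Definition \ref{def.relation} since $p\,R_{\mathbb{P}}\,p$ always holds. Unwinding Definition \ref{def.Dom} shows immediately that $\mathrm{Dom}(P_{\mathbb{P}})=\mathcal{PD}_{\mathbb{P}}$. The key structural observation is that the product relation $P_{\mathbb{Q}}\otimes P_{\mathbb{P}}$ of Definition \ref{def.prod} is exactly the compatibility relation of $\mathbb{Q}\times\mathbb{P}$: two conditions $(q,p)$ and $(v,w)$ are compatible in the product order if and only if $q,v$ are compatible in $\mathbb{Q}$ and $p,w$ are compatible in $\mathbb{P}$. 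Consequently $\mathrm{Dom}(P_{\mathbb{Q}}\otimes P_{\mathbb{P}})=\mathcal{PD}_{\mathbb{Q}\times\mathbb{P}}$, and since $\mathbb{Q}\times\mathbb{P}$ and $\mathbb{P}\times\mathbb{Q}$ are isomorphic as partial orders, this identifies the relevant dominating families of the product with its predense families.

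With this dictionary in place, the proof reduces to a single nontopological lemma that I would prove (or extract from \cite[Chapter III]{kunen}): a partial order $\mathbb{S}$ is c.c.c.\ if and only if $\mathcal{PD}_{\mathbb{S}}$-Lindel\"of holds, i.e.\ every predense subset of $\mathbb{S}$ contains a countable predense subset. The easy direction is that $\mathcal{PD}_{\mathbb{S}}$-Lindel\"of implies c.c.c.: given an uncountable antichain, enlarge it by Zorn's lemma to a maximal antichain $\bar{A}$, which is predense; a countable predense $A_0\subseteq\bar{A}$ must then equal $\bar{A}$, since any element of $\bar{A}\setminus A_0$ would be compatible with --- hence, as $\bar A$ is an antichain, equal to --- some element of $A_0$, forcing $\bar{A}$ and the original antichain to be countable.

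The converse is where the only real work lies, and it is the step I expect to be the main obstacle, because compatibility is not transitive. Given a predense $W\subseteq\mathbb{S}$ in a c.c.c.\ order, I would pass to the downward set $D=\{r:\exists w\in W\,(r\le w)\}$, which is dense and downward closed, choose by Zorn a maximal antichain $A\subseteq D$ --- this is automatically a maximal, hence predense, antichain of $\mathbb{S}$ because $D$ is dense and open --- and note that $A$ is countable by c.c.c. Finally, selecting for each $a\in A$ some $w_a\in W$ with $a\le w_a$ yields a countable $W_0=\{w_a:a\in A\}\subseteq W$ that is predense: any $p$ is compatible with some $a\in A$, and a common lower bound $t$ of $p$ and $a$ satisfies $t\le a\le w_a$, witnessing compatibility of $p$ and $w_a$.

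I would then assemble the corollary by applying Proposition \ref{G1} with $P'=Q'=P_{\mathbb{P}}$, so that the winning strategy for TWO in $\mathsf{G}_1(\mathrm{Dom}(P'),\mathrm{Dom}(Q'))=\mathsf{G}_1(\mathcal{PD}_{\mathbb{P}},\mathcal{PD}_{\mathbb{P}})$ is precisely the hypothesis, and $P=Q=P_{\mathbb{Q}}$. For $(a)$, the assumption that $\mathbb{Q}$ is c.c.c.\ translates via the lemma into $(\mathrm{Dom}(P),\mathrm{Dom}(Q))$-Lindel\"of, so Proposition \ref{G1}$(a)$ gives $(\mathrm{Dom}(P\otimes P'),\mathrm{Dom}(Q\otimes Q'))$-Lindel\"of, i.e.\ $\mathcal{PD}_{\mathbb{Q}\times\mathbb{P}}$-Lindel\"of, which by the lemma and the isomorphism $\mathbb{Q}\times\mathbb{P}\cong\mathbb{P}\times\mathbb{Q}$ says exactly that $\mathbb{P}\times\mathbb{Q}$ is c.c.c. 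For $(b)$, the hypothesis on $\mathbb{Q}$ is literally $\mathsf{S}_1(\mathrm{Dom}(P),\mathrm{Dom}(Q))$, so Proposition \ref{G1}$(b)$ delivers $\mathsf{S}_1(\mathrm{Dom}(P\otimes P'),\mathrm{Dom}(Q\otimes Q'))=\mathsf{S}_1(\mathcal{PD}_{\mathbb{P}\times\mathbb{Q}},\mathcal{PD}_{\mathbb{P}\times\mathbb{Q}})$, as required.
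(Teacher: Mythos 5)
Your proof is correct and takes essentially the same approach as the paper: both apply Proposition \ref{G1}$(a)$--$(b)$ to the compatibility relations $P'=Q'=(\mathbb{P},\mathbb{P},\not\perp)$ and $P=Q=(\mathbb{Q},\mathbb{Q},\not\perp)$, using that $\mathrm{Dom}$ of these relations (and, componentwise, of their product) is exactly the family of predense sets. The lemma you work out in full --- a partial order is c.c.c.\ if and only if every predense set contains a countable predense subset, obtained via a maximal antichain inside the downward closure --- is precisely the one-line observation with which the paper closes its proof.
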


\begin{proof}

Apply Proposition \ref{G1}$(a)$-$(b)$ with 
$P'=Q'=(\mathbb{P},\mathbb{P},\not\perp)$ and
$P=Q=(\mathbb{Q},\mathbb{Q},\not\perp)$.
The result follows from the observation that a partial order is
c.c.c. if and only if every element of $\mathcal{PD}$ has a countable
subset that is also an element of $\mathcal{PD}$ --- since for every
$X\in\mathcal{PD}$ there is a maximal antichain $A\in\mathcal{PD}$
such that every element of $X$ has an extension in $A$.
\end{proof}

It is worth mentioning that Corollary 1.7 of \cite{open-open} is
equivalent --- by Theorem 4.1 of the same paper --- to the statement
that the property ``TWO has a winning strategy in
$\mathsf G_1(\mathcal{PD},\mathcal{PD})$'' is preserved under
arbitrary products with finite support.

The next result may be viewed as a topological counterpart of
Corollary \ref{po-ccc}.

\begin{corol}
\label{sel-ccc}

Let $X$ be a topological space on which TWO has a winning
strategy in the game $\mathsf G_1(\mathcal D,\mathcal D)$. Then:
\begin{itemize}
\item[$(a)$]
the product $X\times Y$ is c.c.c. for every c.c.c. topological space
$Y$;
\item[$(b)$]
the product $X\times Y$ satisfies
$\mathsf{S}_1(\mathcal{D},\mathcal{D})$ for every topological space
$Y$ that satisfies $\mathsf{S}_1(\mathcal{D},\mathcal{D})$.
\end{itemize}
\end{corol}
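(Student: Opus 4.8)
The plan is to follow the pattern of Corollaries \ref{po-ccc} and \ref{weakly.rothberger} and feed the right relations into Proposition \ref{G1}. I would set $P'=Q'=(\tau\setminus\{\emptyset\},\tau,T)$ on $X$ and $P=Q=(\rho\setminus\{\emptyset\},\rho,T')$ on $Y$, where $T$ and $T'$ are the ``meets'' relations $URV\leftrightarrow U\cap V\neq\emptyset$. With these choices $\mathrm{Dom}(P')=\mathrm{Dom}(Q')=\mathcal{D}_X$ and $\mathrm{Dom}(P)=\mathrm{Dom}(Q)=\mathcal{D}_Y$, so the hypothesis that TWO wins $\mathsf{G}_1(\mathcal{D},\mathcal{D})$ on $X$ is exactly the hypothesis of Proposition \ref{G1}.

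The first thing to record is the topological meaning of the product relation. A direct computation shows that $Z\subseteq\rho\times\tau$ lies in $\mathrm{Dom}(P\otimes P')$ if and only if the family of rectangles $\{U\times V:(V,U)\in Z\}$ meets every nonempty basic open box of $X\times Y$, i.e. if and only if this family has dense union in $X\times Y$. Thus, exactly as in Example \ref{ex.prod2}, $\mathrm{Dom}(P\otimes P')$ is not all of $\mathcal{D}_{X\times Y}$ but only those of its members that consist of basic open rectangles. Bridging this gap is the one genuine point of the proof, and I would handle it with a refinement observation: given any $\mathcal{W}\in\mathcal{D}_{X\times Y}$, the family $\mathcal{W}^{\circ}$ of all nonempty open rectangles contained in some member of $\mathcal{W}$ satisfies $\bigcup\mathcal{W}^{\circ}=\bigcup\mathcal{W}$, hence $\mathcal{W}^{\circ}\in\mathrm{Dom}(P\otimes P')$; and conversely, replacing each rectangle of a dense-union subfamily of $\mathcal{W}^{\circ}$ by a member of $\mathcal{W}$ containing it produces a subfamily of $\mathcal{W}$ of the same cardinality whose union is still dense, since supersets preserve dense union.

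For part $(b)$ I would then argue as follows. From $\mathsf{S}_1(\mathcal{D},\mathcal{D})$ on $Y$, i.e. $\mathsf{S}_1(\mathrm{Dom}(P),\mathrm{Dom}(Q))$, Proposition \ref{G1}$(b)$ gives $\mathsf{S}_1(\mathrm{Dom}(P\otimes P'),\mathrm{Dom}(Q\otimes Q'))$. Given a sequence $(\mathcal{W}_n)_{n\in\omega}$ in $\mathcal{D}_{X\times Y}$, I refine each $\mathcal{W}_n$ to $\mathcal{W}_n^{\circ}$, select rectangles $R_n\in\mathcal{W}_n^{\circ}$ with $\{R_n:n\in\omega\}$ of dense union, and lift each $R_n$ to some $W_n\in\mathcal{W}_n$ with $R_n\subseteq W_n$; then $\{W_n:n\in\omega\}$ witnesses $\mathsf{S}_1(\mathcal{D}_{X\times Y},\mathcal{D}_{X\times Y})$.

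For part $(a)$ the extra ingredient is the equivalence, paralleling the characterization of c.c.c.\ used in Corollary \ref{po-ccc}, that a space is c.c.c.\ if and only if $\mathcal{D}$-Lindel\"of holds (every open family with dense union has a countable subfamily with dense union). The forward direction follows by taking a maximal pairwise disjoint family of nonempty open sets each contained in some member of the given $\mathcal{U}\in\mathcal{D}_X$, which is countable by c.c.c.\ and whose union has the same closure as $\bigcup\mathcal{U}$, hence is dense; the reverse follows by adjoining to an uncountable cellular family the complement of the closure of its union, obtaining a member of $\mathcal{D}_X$ with no countable dense-union subfamily. Granting this, $Y$ c.c.c.\ means $(\mathrm{Dom}(P),\mathrm{Dom}(Q))$-Lindel\"of holds, Proposition \ref{G1}$(a)$ yields $(\mathrm{Dom}(P\otimes P'),\mathrm{Dom}(Q\otimes Q'))$-Lindel\"of, and the same refine-and-lift argument promotes this to $\mathcal{D}_{X\times Y}$-Lindel\"of, i.e.\ $X\times Y$ is c.c.c. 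The main obstacle is purely bookkeeping: making the passage between basic-rectangle families and arbitrary open families precise, exactly the subtlety flagged in Example \ref{ex.prod2}.
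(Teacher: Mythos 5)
Your proof is correct, but it takes a genuinely different route from the paper's. The paper proves this corollary in one line by invoking Corollary \ref{po-ccc} with $\mathbb{P}=(\tau\setminus\{\emptyset\},\subseteq)$ and $\mathbb{Q}=(\rho\setminus\{\emptyset\},\subseteq)$: since compatibility in the poset of nonempty open sets is exactly the ``meets'' relation, predense families are families with dense union, and c.c.c.\ of a space is c.c.c.\ of this poset; crucially, the product \emph{poset} $\mathbb{P}\times\mathbb{Q}$ already has pairs as its elements, so the basic-rectangle bookkeeping that occupies the bulk of your argument is absorbed into the standard correspondence between $X\times Y$ and the dense subposet of nonempty open boxes (which the paper leaves implicit, just as it leaves implicit the c.c.c.\ $\leftrightarrow$ countable-predense-subset equivalence, proved there via maximal antichains rather than your maximal cellular families). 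You instead instantiate Proposition \ref{G1} directly with the relations $(\tau\setminus\{\emptyset\},\tau,T)$ and $(\rho\setminus\{\emptyset\},\rho,T')$, identify $\mathrm{Dom}(P\otimes P')$ with dense-union families of open rectangles, and carry out the refine-and-lift passage between rectangle families and arbitrary members of $\mathcal{D}_{X\times Y}$ by hand --- exactly the subtlety flagged in Example \ref{ex.prod2} --- together with a self-contained proof that a space is c.c.c.\ if and only if $\mathcal{D}$-Lindel\"of holds (both directions of which check out, including the maximality argument and the adjunction of $X\setminus\overline{\bigcup\mathcal{C}}$). What the paper's route buys is brevity and reuse of the poset result (which, via \cite{open-open}, also yields preservation under arbitrary finite-support products); what yours buys is explicitness and independence from Corollary \ref{po-ccc}: every translation step is verified rather than delegated to folklore facts about dense subposets.
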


\begin{proof}

Apply Corollary \ref{po-ccc} with
$\mathbb{P}=(\tau\setminus\{\emptyset\},\subseteq)$ and
$\mathbb{Q}=(\rho\setminus\{\emptyset\},\subseteq)$.
\end{proof}

Again by Corollary 1.7 of \cite{open-open} (see comment after
Corollary \ref{po-ccc}), the property
``TWO has a winning strategy in $\mathsf
G_1(\mathcal{D},\mathcal{D})$'' is preserved in arbitrary
(Tychonoff) products of topological spaces.

Note that, as a consequence of Corollary \ref{sel-ccc}$(a)$, 
TWO does not have a winning strategy in the game $\mathsf
G_1(\mathcal{D},\mathcal{D})$ played on a Suslin line; therefore,
the game $\mathsf G_1(\mathcal{D},\mathcal{D})$ is undetermined on
Suslin lines --- see
\cite[Theorem 14 and observation following Problem 1]{sch5}.

%

For the next result, we recall that a topological space is
\emph{$R$-separable} \cite{sch6} if it satisfies
$\mathsf{S}_1(\mathfrak{D},\mathfrak{D})$. For a topological space
$X$, we define $\delta(X)=\sup\{d(Z):Z\in\mathfrak{D}_X\}$, where
$d(Z)=\min\{|D|:D\in\mathfrak{D}_Z\}+\aleph_0$.

\begin{corol}
\label{R-sep}

Let $X$ be a topological space such that TWO has a winning
strategy in the game $\mathsf G_1(\mathfrak D,\mathfrak D)$ on
$X$. Then:
\begin{itemize}
\item[$(a)$]
$\delta(X\times Y)=\aleph_0$ for every topological space $Y$ with
  $\delta(Y)=\aleph_0$;
\item[$(b)$]
$X\times Y$ is $R$-separable for every $R$-separable space $Y$;
\item[$(c)$]
TWO has a winning strategy in the game
$\mathsf{G}_1(\mathfrak{D},\mathfrak{D})$ on the product $X\times Y$
for every topological space $Y$ on which TWO has a winning
strategy in the game $\mathsf{G}_1(\mathfrak{D},\mathfrak{D})$.
\end{itemize}
\end{corol}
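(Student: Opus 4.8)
The plan is to recognize that Corollary~\ref{R-sep} is a direct application of Proposition~\ref{G1}, exactly paralleling the proofs of Corollaries~\ref{rothberger}, \ref{omega-1} and \ref{sel-ccc}. The key observation is that all three families and games appearing in the statement are instances of the relation $P=(\tau\setminus\{\emptyset\},X,\ni)$ from Example~\ref{ex.prod1}: recall from the table in Example~\ref{ex.Dom} that $\mathrm{Dom}(\tau\setminus\{\emptyset\},X,\ni)=\mathfrak{D}_X$, the family of dense subsets of $X$. Thus the property ``TWO has a winning strategy in $\mathsf{G}_1(\mathfrak{D},\mathfrak{D})$'' is precisely ``TWO has a winning strategy in $\mathsf{G}_1(\mathrm{Dom}(P),\mathrm{Dom}(P))$'' for this $P$, and ``$R$-separable'' is $\mathsf{S}_1(\mathfrak{D},\mathfrak{D})=\mathsf{S}_1(\mathrm{Dom}(P),\mathrm{Dom}(P))$. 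So I would set $P'=Q'=(\tau\setminus\{\emptyset\},X,\ni)$ and $P=Q=(\rho\setminus\{\emptyset\},Y,\ni)$, matching the hypothesis of Proposition~\ref{G1} (TWO wins $\mathsf{G}_1(\mathrm{Dom}(P'),\mathrm{Dom}(Q'))$ on $X$) to our assumption.

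The crucial compatibility step, which I expect to be the main (though mild) obstacle, is verifying that the product relation yields the correct family on $X\times Y$, i.e. that $\mathrm{Dom}(P\otimes P')=\mathfrak{D}_{X\times Y}$. This is exactly the content of Example~\ref{ex.prod1}, which computes $P\otimes P'=((\tau\setminus\{\emptyset\})\times(\rho\setminus\{\emptyset\}),X\times Y,\ni\otimes\ni)$ and identifies its dominating families with the dense subsets of the product. The only subtlety is that the relation sees only sets of the form $\{(b_i,b'_i):i\in I\}$ indexed compatibly rather than arbitrary subsets of $X\times Y$; but a set $D\subseteq X\times Y$ is dense exactly when it meets every basic open rectangle $U\times V$ with $U,V$ nonempty open, which is precisely the condition $\{(b_i,b_i')\}\in\mathrm{Dom}(P\otimes P')$. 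With this identification in hand, parts $(b)$ and $(c)$ follow immediately from Proposition~\ref{G1}$(b)$ and $(c)$ respectively.

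For part $(a)$, concerning the cardinal function $\delta$, the reduction is slightly less mechanical, so this is where I would be most careful. I would first unwind the definition: $\delta(X\times Y)=\aleph_0$ asserts that every dense subset of $X\times Y$ has density at most $\aleph_0$, equivalently that every element of $\mathfrak{D}_{X\times Y}$ contains a \emph{countable} dense subset of itself. The natural route is to observe that $\delta(X\times Y)=\aleph_0$ is equivalent to a Lindel\"of-type statement of the form $(\mathfrak{D}_{X\times Y},\mathfrak{D}_{X\times Y})$-Lindel\"of relativized appropriately, and then invoke Proposition~\ref{G1}$(a)$ with the same choice of relations. More precisely, since TWO having a winning strategy in $\mathsf{G}_1(\mathfrak{D},\mathfrak{D})$ on $X$ in particular implies $\delta(X)=\aleph_0$, and since $\delta(Y)=\aleph_0$ is our hypothesis on $Y$, the $(\mathfrak{D},\mathfrak{D})$-Lindel\"of property for the factors transfers to $X\times Y$ via part $(a)$ of Proposition~\ref{G1}; one then checks that $(\mathfrak{D}_{X\times Y},\mathfrak{D}_{X\times Y})$-Lindel\"of together with the definition of $d$ gives $\delta(X\times Y)=\aleph_0$.

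The main conceptual point throughout is simply that no new combinatorics are needed: Proposition~\ref{G1} already packages the heavy selective argument, and Corollary~\ref{R-sep} is obtained by plugging in the dense-set relation and citing Example~\ref{ex.prod1} to confirm that the product relation captures density in $X\times Y$. The only genuine care required is in translating the cardinal-function statement of part $(a)$ into the $(\mathcal{A},\mathcal{B})$-Lindel\"of language of Definition~\ref{def.Lindelof} so that Proposition~\ref{G1}$(a)$ applies verbatim.
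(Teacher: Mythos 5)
Your proposal is correct and takes essentially the same route as the paper, whose entire proof is the one-line instantiation you chose: apply Proposition \ref{G1} with $P'=Q'=(\tau\setminus\{\emptyset\},X,\ni)$ and $P=Q=(\rho\setminus\{\emptyset\},Y,\ni)$. The additional details you supply --- identifying $\mathrm{Dom}(P\otimes P')$ with $\mathfrak{D}_{X\times Y}$ via Example \ref{ex.prod1}, and translating $\delta(\cdot)=\aleph_0$ into $\mathfrak{D}$-Lindel\"of (using that a subset of a dense set $Z$ is dense in $Z$ if and only if it is dense in the whole space) --- are precisely the verifications the paper leaves implicit.
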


\begin{proof}

Apply Proposition \ref{G1} with
$P'=Q'=(\tau\setminus\{\emptyset\},X,\ni)$ and
$P=Q=(\rho\setminus\{\emptyset\},Y,\ni)$.
\end{proof}

For our last result in this section, recall that a topological space
$X$ is \emph{countably tight} at a point $x\in X$ if
$\Omega_x$-Lindel\"of holds on $X$, and has
\emph{countable strong fan tightness at $x$} \cite{sakai} if
$\mathsf{S}_1(\Omega_x,\Omega_x)$ holds on $X$.

\begin{corol}
\label{Omega_x}

Let $X$ be a topological space and $x\in X$. Suppose that TWO
has a winning strategy in the game $\mathsf G_1(\Omega_x,\Omega_x)$ on
$X$. Then:
\begin{itemize}
\item[$(a)$]
if a topological space $Y$ is countably tight at a point $y\in Y$,
then $X\times Y$ is countably tight at $(x,y)$;
\item[$(b)$]
if a topological space $Y$ has countable strong fan tightness at
$y\in Y$, then the product space $X\times Y$ has countable strong fan
tightness at the point $(x,y)$;
\item[$(c)$]
if a topological space $Y$ is such that TWO has a winning
strategy in the game $\mathsf G_1(\Omega_y,\Omega_y)$ for a point
$y\in Y$, then TWO has a winning strategy in the game $\mathsf
G_1(\Omega_{(x,y)},\Omega_{(x,y)})$ on the product $X\times Y$.
\end{itemize}
\end{corol}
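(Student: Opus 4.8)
The plan is to obtain all three parts from Proposition \ref{G1} by choosing relations whose dominating families are exactly the cluster-point families $\Omega$. I would take $P'=Q'=(\tau_x,X,\ni)$ and $P=Q=(\rho_y,Y,\ni)$. By the entry for $(\tau_x,X,\ni)$ in Example \ref{ex.Dom}, $\mathrm{Dom}(P')=\mathrm{Dom}(Q')=\Omega_x$ and $\mathrm{Dom}(P)=\mathrm{Dom}(Q)=\Omega_y$; hence the standing hypothesis that TWO wins $\mathsf{G}_1(\Omega_x,\Omega_x)$ is precisely the hypothesis of Proposition \ref{G1}, while the hypotheses on $Y$ in (a), (b), (c) become respectively $\Omega_y$-Lindel\"of (countable tightness at $y$), $\mathsf{S}_1(\Omega_y,\Omega_y)$ (countable strong fan tightness at $y$), and TWO having a winning strategy in $\mathsf{G}_1(\Omega_y,\Omega_y)$.

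The crux is to verify that the product relation yields the correct family on the product space, namely that $\mathrm{Dom}(P\otimes P')=\mathrm{Dom}(Q\otimes Q')$ is (a copy of) $\Omega_{(x,y)}$. Unwinding Definition \ref{def.prod}, $P\otimes P'$ has challenge set $\rho_y\times\tau_x$ and response set $Y\times X$, with $(V,U)\,(\ni\otimes\ni)\,(y',x')$ meaning $y'\in V$ and $x'\in U$; so, after identifying $Y\times X$ with $X\times Y$ via $(y',x')\mapsto(x',y')$ (which sends the distinguished point $(y,x)$ to $(x,y)$), a set $Z$ is dominating in $P\otimes P'$ exactly when every box $U\times V$ with $U\in\tau_x$, $V\in\rho_y$ meets $Z$. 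This is where the only genuine work lies: such boxes form a neighborhood base at $(x,y)$ in $X\times Y$, so ``every basic box meets $Z$'' is equivalent to $(x,y)\in\overline{Z}$, i.e.\ to $Z\in\Omega_{(x,y)}$. I expect this base identification to be the main obstacle, although it is routine once spelled out; it is the same refinement phenomenon used in Example \ref{ex.prod2} and in the parenthetical of Corollary \ref{omega-1}.

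With the four families identified, the three conclusions follow at once from the corresponding parts of Proposition \ref{G1}. Part (a) sends $\Omega_y$-Lindel\"of to $\Omega_{(x,y)}$-Lindel\"of, i.e.\ countable tightness of $X\times Y$ at $(x,y)$; part (b) sends $\mathsf{S}_1(\Omega_y,\Omega_y)$ to $\mathsf{S}_1(\Omega_{(x,y)},\Omega_{(x,y)})$, i.e.\ countable strong fan tightness at $(x,y)$; and part (c) sends a winning strategy for TWO in $\mathsf{G}_1(\Omega_y,\Omega_y)$ on $Y$ to one in $\mathsf{G}_1(\Omega_{(x,y)},\Omega_{(x,y)})$ on $X\times Y$. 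These are exactly statements (a), (b), (c).
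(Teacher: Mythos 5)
Your proposal is correct and is essentially the paper's own proof, which likewise applies Proposition \ref{G1} with $P'=Q'=(\tau_x,X,\ni)$ and $P=Q=(\rho_y,Y,\ni)$; your explicit check that, after swapping coordinates, $\mathrm{Dom}(P\otimes P')=\Omega_{(x,y)}$ --- because the boxes $U\times V$ with $U\in\tau_x$, $V\in\rho_y$ form a neighborhood base at $(x,y)$, so here the identification is exact rather than merely up to refinement as in Example \ref{ex.prod2} --- is precisely the step the paper leaves implicit. If anything you are slightly more careful than the paper, whose proof cites only part $(b)$ of Proposition \ref{G1} even though parts $(a)$ and $(c)$ of the corollary require parts $(a)$ and $(c)$ of that proposition.
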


\begin{proof}

Apply Proposition \ref{G1}$(b)$ with $P'=Q'=(\{U\in\tau:x\in
U\},X,\ni)$ and $P=Q=(\{V\in\rho:y\in
V\},Y,\ni)$.
\end{proof}

We note that Corollary \ref{Omega_x}$(a)$ extends Corollary 2.4 of
\cite{ab}, in which the same conclusion is obtained under the
assumption that $X$ is completely regular. This result will be further improved
in Corollary \ref{answer.aurichi-bella}, in view of the fact that, if
$\pi\mathcal{N}_x^{\aleph_0}$-Lindel\"of holds on $X$, then $X$ is
productively countably tight at $x$ \cite[Corollary 2.3]{bella-sakai}.

\section{Products and finite selections}

By Corollary 3 of \cite{topsoe}, Corollary \ref{ONE-c-o} is equivalent
to the statement that the property ``TWO has a winning strategy
in the Menger game'' is finitely productive in the realm of regular
spaces. We will now give a direct proof\footnote{We could
    put this proof together thanks to an idea due to Leandro Aurichi,
    to whom we are very grateful.}
of this fact without assuming any separation axioms, which will serve
as motivation for Proposition \ref{Gfin}.

\begin{prop}
\label{prod.TWOMenger}

Let $X$ and $Y$ be topological spaces such that TWO has a winning
strategy in both of the games
$\mathsf G_{\mathrm{fin}}(\mathcal O_X,\mathcal O_X)$ and
$\mathsf G_{\mathrm{fin}}(\mathcal O_Y,\mathcal O_Y)$.
Then TWO has a winning strategy in the game $\mathsf
G_{\mathrm{fin}}(\mathcal O_{X\times Y},\mathcal O_{X\times Y})$.

\end{prop}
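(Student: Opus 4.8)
The plan is to imitate the scheme behind Propositions \ref{ONE}$(c)$ and \ref{G1}$(c)$, now for finite selections and for two given winning strategies of TWO, after first reducing the product covers to covers by open boxes. Write $\sigma$ for a winning strategy of TWO in $\mathsf{G}_{\mathrm{fin}}(\mathcal{O}_X,\mathcal{O}_X)$ and $\eta$ for one in $\mathsf{G}_{\mathrm{fin}}(\mathcal{O}_Y,\mathcal{O}_Y)$. The first step is the observation already used in Example \ref{ex.prod2} and in Corollary \ref{ONE-c-o}: given an open cover $\mathcal{W}$ of $X\times Y$, the family $\mathcal{R}(\mathcal{W})=\{U\times V:U\in\tau,\ V\in\rho,\ U\times V\subseteq W\text{ for some }W\in\mathcal{W}\}$ is again an open cover of $X\times Y$, and any finite subfamily of $\mathcal{R}(\mathcal{W})$ whose union covers $X\times Y$ pulls back to a finite subfamily of $\mathcal{W}$ with the same property (replace each chosen box by a member of $\mathcal{W}$ containing it). Hence it suffices to build a winning strategy for TWO against opponents that play box covers, i.e.\ in the game $\mathsf{G}_{\mathrm{fin}}(\mathrm{Dom}(P\otimes P'),\mathrm{Dom}(P\otimes P'))$ with $P=(X,\tau,\in)$ and $P'=(Y,\rho,\in)$; this is essentially the instance of the forthcoming Proposition \ref{Gfin} that we are after, so the combinatorial core can be isolated from the topology. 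Note also the trivial reductions: if $X$ or $Y$ is empty the statement is vacuous, and for a fixed $x_0\in X$ the family $\{V:U\times V\in\mathcal{R}(\mathcal{W})\text{ for some }U\ni x_0\}$ is an open cover of $Y$ (and symmetrically for $X$), so box covers do project onto honest covers of the factors.

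For the core I would run the two strategies simultaneously on a tree. Fix a partition $\omega=\dot\bigcup\{L_s:s\in\mbox{}^{<\omega}\omega\}$ into infinite blocks with $\min(L_s)>\max(\mathrm{im}(s))$ for $s\neq\emptyset$, exactly as in Proposition \ref{ONE}$(c)$. The strategy $\eta$ is played along the branches of the tree: at inning $n=m^s_k\in L_s$ the second coordinate advances the play of $\mathsf{G}_{\mathrm{fin}}(\mathcal{O}_Y,\mathcal{O}_Y)$ indexed by the node $s$, feeding $\eta$ the $Y$-projections of the box covers seen at the innings listed by $s$, so that along every branch $f\in\mbox{}^{\omega}\omega$ the $Y$-selections accumulate to a cover of $Y$. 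The strategy $\sigma$ is played along the blocks: within $L_s$ the first coordinate runs a single play of $\mathsf{G}_{\mathrm{fin}}(\mathcal{O}_X,\mathcal{O}_X)$, fed the $X$-projections, so that the $X$-selections in $L_s$ accumulate to a cover of $X$. TWO's actual move at inning $n$ is the finite collection of boxes of $\mathcal{R}(\mathcal{W}_n)$ extracted from the two component selections, and the causality condition $\min(L_s)>\max(\mathrm{im}(s))$ guarantees that, when the block $L_s$ is active, the $Y$-information encoded by $s$ has already been committed to.

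The heart of the argument --- and the step I expect to be the main obstacle --- is the coupling: TWO must cover $X\times Y$ with \emph{single} boxes, whereas a box that the $X$-play selects in order to cover a given $x$ carries a second coordinate that need not contain a prescribed $y$, and symmetrically. In the singleton-selection results this difficulty is invisible, because there TWO's move is literally one pair and the duality of Theorem \ref{dual} lets ONE read off both coordinates of the same move; here no such duality is available for $\mathsf{G}_{\mathrm{fin}}$, and without separation axioms one cannot invoke a tube lemma either. To overcome it I would, rather than projecting the two coordinates independently, feed the $X$-play at a node the slices of the box cover determined by the points of $Y$ that the corresponding $Y$-play is responsible for: the point is that slicing a box cover at a single point of $Y$ yields a genuine open cover of $X$, so a box chosen to cover $x$ in such a slice automatically has a second coordinate containing the intended $Y$-point. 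The winning verification would then mirror that of Proposition \ref{ONE}$(c)$: given $(x,y)$, use that every block yields a cover of $X$ to choose recursively a branch $(n_j)$ along which the $X$-selections catch $x$, and use that the $Y$-play along that branch covers $Y$ to find an inning whose selected box catches $y$ as well, with the causality condition forcing these two events to be realized by one and the same box. Making the slicing and the scheduling fit together so that this single box genuinely lies in $\mathcal{R}(\mathcal{W}_n)$ is exactly the delicate point, and is where I expect the idea attributed to Aurichi to be essential.
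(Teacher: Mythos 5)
Your first paragraph (reduction to covers by open boxes) and your scheduling scheme (one strategy advanced along the branches of a tree of plays, the other along infinite blocks $L_s$ with $\min(L_s)>\max(\mathrm{im}(s))$) match the paper's proof in spirit, and you have correctly isolated the real obstacle: in $\mathsf{G}_{\mathrm{fin}}$ there is no duality like Theorem \ref{dual} to make the two coordinates of a single move cohere, and independent projections do not produce single boxes containing a prescribed pair $(x,y)$. You have even located the right germ of the fix --- slicing the box cover at a point of one factor yields an honest open cover of the other factor. But your proposal stops short of a proof precisely at the step you flag as ``the delicate point,'' and the mechanism you sketch for it is circular as stated: you propose to feed the $X$-play ``the slices determined by the points of $Y$ that the corresponding $Y$-play is responsible for,'' yet at any finite stage a play of $\mathsf{G}_{\mathrm{fin}}(\mathcal{O}_Y,\mathcal{O}_Y)$ has committed only to finitely many open sets whose union covers nothing yet; there is no canonical finite set of ``responsible'' points, and the anchor points can only emerge \emph{after} the complementary strategy has acted on a cover that is itself built from the slice-play's selections. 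Moreover, slicing at several points at once fails (the sets $U^n_i$ with $V^n_i$ containing a fixed finite set of $y$'s need not cover $X$), so one sliced $X$-play per block cannot serve several $Y$-anchors.

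What the paper actually does (the idea you attribute, correctly, to Aurichi) supplies the two devices your sketch is missing. First, in each inning the sliced strategy is run at \emph{every} point of one factor simultaneously: for each $x\in X$ one applies the $Y$-strategy to the slice cover $\mathcal{V}^x_n=\{V^n_i:x\in U^n_i\}$, obtaining a finite $F^x_n\subseteq I^x_n$. Second --- and this is the coupling your verification needs --- one intersects the \emph{complementary} sides of the selected boxes, setting $W^x_n=\bigcap_{i\in F^x_n}U^n_i$, an open neighborhood of $x$; the $X$-strategy is then applied to the cover $\{W^x_n:x\in X\}$ to select finitely many anchors $x^n_k$, and TWO plays the boxes indexed by $\bigcup_k F^{x^n_k}_n$. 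In the verification, one chooses the branch so that $x\in W^{x^{n_r}_{k_r}}_{n_r}$; then the $Y$-plays anchored along that branch catch $y$ in some $V^{n_r}_{i'}$ with $i'\in F^{x^{n_r}_{k_r}}_{n_r}$, and the inclusion $x\in W^{x^{n_r}_{k_r}}_{n_r}\subseteq U^{n_r}_{i'}$ forces the \emph{same} box to catch $x$ --- this inclusion is exactly what your sketch lacks, since a play anchored at $x^{n_r}_{k_r}$ rather than at $x$ itself gives no control over the $U$-sides without the intersection. Finally, a bookkeeping point: because TWO's moves in the finite-selection game produce \emph{several} anchors per inning, the branching must record not only which innings were used but also which anchor index was chosen at each of them; hence the paper partitions $\omega$ into blocks $L^t_s$ indexed by \emph{pairs} $(s,t)\in\mbox{}^n\omega\times\mbox{}^n\omega$, whereas your single-sequence partition copied from Proposition \ref{ONE}$(c)$ (where TWO's response is a single element) does not carry enough information. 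This last defect is repairable by a pairing function, but the simultaneous-plays-plus-intersection device is the missing idea, not a routine elaboration of what you wrote.
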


\begin{proof}

Let
$\varphi:\mbox{}^{<\omega}\mathcal
O_X\setminus\{\emptyset\}\rightarrow[\tau]^{<\aleph_0}\setminus\{\emptyset\}$
and
$\sigma:\mbox{}^{<\omega}\mathcal
O_Y\setminus\{\emptyset\}\rightarrow[\rho]^{<\aleph_0}\setminus\{\emptyset\}$
be winning strategies for TWO in
$\mathsf G_{\mathrm{fin}}(\mathcal O_X,\mathcal O_X)$ and
$\mathsf G_{\mathrm{fin}}(\mathcal O_Y,\mathcal O_Y)$ respectively.
Let $\mathcal
P=\bigcup_{n\in\omega}(\mbox{}^n\omega\times\mbox{}^n\omega)$,
and write
$\omega=\dot\bigcup\{L_s^t:(s,t)\in\mathcal P\}$
with
\begin{itemize}
\item[$\cdot$]
$|L^t_s|=\aleph_0$ for every $(s,t)\in\mathcal P$; and
\item[$\cdot$]
$\min L^t_s>\max(\mathrm{im}(s))$ for every
$(s,t)\in\mathcal P\setminus\{(\emptyset,\emptyset)\}$.
\end{itemize}
Enumerate each set $L_s^t$ as $L_s^t=\{l^t_s(j):j\in\omega\}$,
where $l^t_s(j)<l^t_s(j+1)$ for all $j\in\omega$.

In order to construct a winning strategy for TWO in the game $\mathsf
G_{\mathrm{fin}}(\mathcal O_{X\times Y},\mathcal O_{X\times Y})$,
we may assume that, in each inning $n\in\omega$ of this game, ONE
plays an open cover $\mathcal U_n$ of $X\times Y$ of the form
$\mathcal U_n=\{U^n_i\times V^n_i:i\in I_n\}$. Given such
an open cover, for each $x\in X$, define
$I_n^x=\{i\in I_n:x\in U^n_i\}$ and
$\mathcal V^x_n=\{V^n_i:i\in I^x_n\}\in\mathcal O_Y$.
In each inning $n\in\omega$, we will make use of the
strategy $\sigma$ to assign to each $x\in X$ a finite nonempty subset
of $\mathcal V^x_n$ --- that is, a finite nonempty subset
$F^x_n$ of $I^x_n$. We will then define
$W^x_n=\bigcap_{i\in F^x_n}U^n_i$ for each $x\in
X$, and then make use of the strategy $\varphi$ to choose finitely
many elements of the open cover $\mathcal W_n=
\{W^x_n:x\in X\}$ of $X$ --- that is, a finite subset
$\{x^n_k:k\in\omega\}$ of $X$, here enumerated with infinite
repetition of all of the terms. TWO's answer to $\mathcal U_n$ will
then be the finite set
$\mathcal F_n=\bigcup_{k\in\omega}\left\{U^n_i\times V^n_i:i\in F^{x^n_k}_n\right\}$.
In order to make sure that this will define a winning strategy, when
making use of $\sigma$ we will consider not all of the previous
innings, but only those listed (in a sense that will become clear in
the next paragraph) by the sequences
$s,t\in\mbox{}^{<\omega}\omega$ such that $n\in L_s^t$; also,
when making use of $\varphi$ we will consider a play of $\mathsf
G_{\mathrm{fin}}(\mathcal O_X,\mathcal O_X)$ whose history is given
not by all of the innings $0,1,\dots,n-1$, but only those in
$L^t_s\cap n$.

Now for the details of the procedure.
Suppose that, in the inning $n\in\omega$ of $\mathsf
G_{\mathrm{fin}}(\mathcal O_{X\times Y},\mathcal O_{X\times Y})$, the
play so far is
$(\mathcal U_0,\mathcal F_0,\mathcal U_1,\mathcal F_1,\dots,\mathcal
U_n)$, where each $\mathcal F_m$ is of the form
$\mathcal F_m=\bigcup_{k\in\omega}\left\{U^m_i\times V^m_i:
i\in F^{x^m_k}_m\right\}$, as described in the previous paragraph.
Let $(s,t)\in\mathcal P$ and $j\in\omega$ be such that $n=l^t_s(j)\in
L_s^t$. For each $x\in X$, let
$F^x_n\in[I^x_n]^{<\aleph_0}\setminus\{\emptyset\}$ be such that
$\sigma\left(\left(\mathcal V^{x^{s(r)}_{t(r)}}_{s(r)}
\right)_{r\in\mathrm{dom}(s)}\mbox{}^\frown
\left(\mathcal V_n^x\right)\right)=\{V^n_i:i\in F^x_n\}$.
We can now define the open neighborhood
$W^x_n=\bigcap_{i\in F^x_n}U^n_i$ for each $x\in X$, and then consider
$\mathcal W_n=\{W^x_n:x\in X\}\in\mathcal O_X$.
Now let $\{x^n_k:k\in\omega\}$ be a finite subset of $X$, enumerated
with infinite repetition of all of the terms, satisfying
$\varphi\left(\left(\mathcal W_{l^t_s(h)}
\right)_{h\le j}\right)=\left\{W_n^{x^n_k}:k\in\omega\right\}$.
TWO's answer in the $n$-th inning of the play $(\mathcal U_0,\mathcal
F_0,\mathcal U_1,\mathcal F_1,\dots,\mathcal U_n)$ of $\mathsf
G_{\mathrm{fin}}(\mathcal O_{X\times Y},\mathcal O_{X\times Y})$ is
then the finite subset $\mathcal
F_n=\bigcup_{k\in\omega}\left\{U^n_i\times V^n_i:i\in
F^{x^n_k}_n\right\}$ of $\mathcal U_n$.

Let us now prove that this defines a winning strategy for TWO in
$\mathsf G_{\mathrm{fin}}(\mathcal O_{X\times Y},\mathcal O_{X\times Y})$.
Let $(x,y)\in X\times Y$ be arbitrary.
Pick $n_0\in L^\emptyset_\emptyset$ with
$x\in\bigcup_{k\in\omega}W_{n_0}^{x^{n_0}_k}$ --- such an $n_0$ must
exist, for otherwise
\begin{center}
{\renewcommand{\arraystretch}{3.0}
\renewcommand{\tabcolsep}{0.25cm}
\begin{tabular}{c|c}
ONE & TWO \\

\hline

$\mathcal W_{l^\emptyset_\emptyset(0)}$ &
$\left\{W_{l^\emptyset_\emptyset(0)}^{x^{l^\emptyset_\emptyset(0)}_k}:k\in\omega\right\}$\\
$\mathcal W_{l^\emptyset_\emptyset(1)}$ &
$\left\{W_{l^\emptyset_\emptyset(1)}^{x^{l^\emptyset_\emptyset(1)}_k}:k\in\omega\right\}$\\
$\vdots$ & $\vdots$ \\
$\mathcal W_{l^\emptyset_\emptyset(j)}$ &
$\left\{W_{l^\emptyset_\emptyset(j)}^{x^{l^\emptyset_\emptyset(j)}_k}:k\in\omega\right\}$\\
$\vdots$ & $\vdots$
\end{tabular}}\\
\end{center}
would be a play of $\mathsf G_{\mathrm{fin}}(\mathcal O_X,\mathcal
O_X)$ in which TWO plays according to the winning strategy $\varphi$
and loses. Let then $k_0\in\omega$ be such that $x\in
W_{n_0}^{x^{n_0}_{k_0}}$.
Similarly, we can recursively pick $n_r\in
L^{(k_l)_{l<r}}_{(n_l)_{l<r}}$ and $k_r\in\omega$ satisfying $x\in
W_{n_r}^{x^{n_r}_{k_r}}$ for each $r\in\omega$, thus defining the
sequences
$(n_r)_{r\in\omega}\in\mbox{}^\omega\omega$ and
$(k_r)_{r\in\omega}\in\mbox{}^\omega\omega$.
Now
\begin{center}
{\renewcommand{\arraystretch}{2.5}
\renewcommand{\tabcolsep}{0.25cm}
\begin{tabular}{c|c}
ONE & TWO \\

\hline

$\mathcal V^{x_{k_0}^{n_0}}_{n_0}$ &
$\left\{V^{n_0}_i:i\in F_{n_0}^{x^{n_0}_{k_0}}\right\}$\\
$\mathcal V^{x_{k_1}^{n_1}}_{n_1}$ &
$\left\{V^{n_1}_i:i\in F_{n_1}^{x^{n_1}_{k_1}}\right\}$\\
$\vdots$ & $\vdots$\\
$\mathcal V^{x_{k_r}^{n_r}}_{n_r}$ &
$\left\{V^{n_r}_i:i\in F_{n_r}^{x^{n_r}_{k_r}}\right\}$\\
$\vdots$ & $\vdots$
\end{tabular}}\\
\end{center}
is a play of $\mathsf G_{\mathrm{fin}}(\mathcal O_Y,\mathcal O_Y)$ in
which TWO follows the winning strategy $\sigma$; therefore, there is
$h\in\omega$ such that, for some $i'\in F_{n_r}^{x^{n_r}_{k_r}}$, we
have $y\in V^{n_r}_{i'}$.
Since
$x\in W_{n_r}^{x^{n_r}_{k_r}}=
\bigcap_{i\in F^{x^{n_r}_{k_r}}_{n_r}}U^{n_r}_i\subseteq U^{n_r}_{i'}$
also holds, it follows that $(x,y)\in U^{n_r}_{i'}\times
V^{n_r}_{i'}$. Thus, the sets played by TWO cover the product
$X\times Y$.
\end{proof}

In order to capture the main aspect needed to adapt the proof of
Proposition \ref{prod.TWOMenger} to the general setting of relations,
we define the following auxiliary concept.

%
%

\begin{defin}
\label{def.order}

Let $P=(A,B,R)$ be a relation and $\preceq$ be a partial order on the
set $B$. We say that $\preceq$ is
\begin{itemize}
\item[$\cdot$]
\emph{downwards $P$-compatible} if,
for all $a\in A$ and $b_1,b_2\in B$,
$$(aRb_1\;\&\; aRb_2)\rightarrow\exists \tilde{b}\in B\;(aR\tilde{b}\;\&\; \tilde{b}\preceq
  b_1\;\&\; \tilde{b}\preceq b_2);$$
\item[$\cdot$]
\emph{upwards $P$-compatible} if,
for all $a\in A$ and $b_1,b_2\in B$,
$$(aRb_1\;\&\; b_1\preceq b_2)\rightarrow aRb_2.$$
\end{itemize}
\end{defin}

We can now state the main result of this section.

\begin{prop}
\label{Gfin}

Let $P=(A,B,R)$, $P'=(A',B',R')$, $Q=(C,D,T)$
and $Q'=(C',D',T')$ be relations with $B=D$
such that there is a partial order $\preceq$ on $B$ that is both
downwards $P$-compatible and upwards $Q$-compatible.
Suppose that TWO has a winning strategy in the game
$\mathsf{G}_{\mathrm{fin}}(\mathrm{Dom}(P'),\mathrm{Dom}(Q'))$.

\begin{itemize}

\item[$(a)$]
If $(\mathrm{Dom}(P),\mathrm{Dom}(Q))$-Lindel\"of holds, then
$(\mathrm{Dom}(P\otimes P'),\mathrm{Dom}(Q\otimes Q'))$-Lindel\"of
holds.

\item[$(b)$]
If $\mathsf{S}_{\mathrm{fin}}(\mathrm{Dom}(P),\mathrm{Dom}(Q))$ holds,
then $\mathsf{S}_{\mathrm{fin}}(\mathrm{Dom}(P\otimes
P'),\mathrm{Dom}(Q\otimes Q'))$ also holds.

\item[$(c)$]
If TWO has a winning strategy in the game
$\mathsf{G}_{\mathrm{fin}}(\mathrm{Dom}(P),\mathrm{Dom}(Q))$, then
TWO has a winning strategy in the game
$\mathsf{G}_{\mathrm{fin}}(\mathrm{Dom}(P\otimes
P'),\mathrm{Dom}(Q\otimes Q'))$.

\end{itemize}

\end{prop}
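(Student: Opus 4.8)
The plan is to generalize the argument of Proposition~\ref{prod.TWOMenger} --- whose Menger instance is $P=Q=(X,\tau,\in)$, $P'=Q'=(Y,\rho,\in)$ and $\preceq\,=\,\subseteq$ --- by recognizing its three open-cover-specific steps as relational operations. From a product cover $\{U_i^n\times V_i^n\}$ one passed to the cover $\mathcal V_n^x=\{V_i^n:x\in U_i^n\}$ of $Y$; one formed the neighbourhood $W_n^x=\bigcap_{i\in F_n^x}U_i^n$ of $x$; and one recovered $x\in U_{i'}^n$ from $x\in W_n^x\subseteq U_{i'}^n$. Abstractly these become: from $Z=\{(b_i,b_i'):i\in I\}\in\mathrm{Dom}(P\otimes P')$ and $a\in A$, the family $Z_a'=\{b_i':i\in I,\,aRb_i\}$, which lies in $\mathrm{Dom}(P')$; the manufacture, via downwards $P$-compatibility iterated over a nonempty finite $F\subseteq\{i:aRb_i\}$, of a single $\tilde b\in B$ with $aR\tilde b$ and $\tilde b\preceq b_i$ for all $i\in F$; and the inference, via upwards $Q$-compatibility, of $cTb_{i'}$ from $cT\tilde b$ together with $\tilde b\preceq b_{i'}$. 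I would prove all three parts from one construction, the $(P,Q)$-side being handled by $(\mathrm{Dom}(P),\mathrm{Dom}(Q))$-Lindel\"of in $(a)$, by $\mathsf S_{\mathrm{fin}}(\mathrm{Dom}(P),\mathrm{Dom}(Q))$ in $(b)$, and by a winning strategy $\varphi$ for TWO in $\mathsf G_{\mathrm{fin}}(\mathrm{Dom}(P),\mathrm{Dom}(Q))$ in $(c)$; I describe $(c)$, the richest case.

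First I would fix winning strategies $\sigma$ and $\varphi$ for TWO in $\mathsf G_{\mathrm{fin}}(\mathrm{Dom}(P'),\mathrm{Dom}(Q'))$ and $\mathsf G_{\mathrm{fin}}(\mathrm{Dom}(P),\mathrm{Dom}(Q))$. Since $\mathrm{Dom}(Q)$, $\mathrm{Dom}(Q')$ are closed under supersets and every member of $\mathrm{Dom}(P)$, $\mathrm{Dom}(P')$ is nonempty, I may assume both strategies answer with nonempty finite sets --- the nonemptiness furnishing the base case of the iterated downwards $P$-compatibility. Moreover, because $\sigma$ is \emph{winning}, every element it ever selects lies in $D'$ (the codomain of $Q'$), and since $B=D$ each $\tilde b$ lies in $D$; hence every pair TWO will play sits in $D\times D'$, as the target $\mathrm{Dom}(Q\otimes Q')$ demands. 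The core single-node step is then: given ONE's move $Z=\{(b_i,b_i'):i\in I\}$, feed each $Z_a'$ to $\sigma$ to get a nonempty finite $F^a$, build $\tilde b_a$ as above so that $\{\tilde b_a:a\in A\}\in\mathrm{Dom}(P)$, apply $\varphi$ to select finitely many $a$'s, and have TWO answer with $\bigcup\{(b_i,b_i'):i\in F^a,\ a\text{ selected}\}\subseteq Z$.

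To assemble these into a strategy for $(c)$ I would reuse the bookkeeping of Proposition~\ref{prod.TWOMenger}: a partition $\omega=\dot\bigcup\{L^t_s:(s,t)\in\bigcup_n({}^n\omega\times{}^n\omega)\}$ into infinite sets with $\min L^t_s>\max(\mathrm{im}(s))$, so that along each branch one runs a $\varphi$-play of the $(P,Q)$-game whose innings fill $L^t_s$ and, simultaneously, a $\sigma$-play of the $(P',Q')$-game whose $r$-th round uses the inning recorded by $s(r)$ and the selected $a$ recorded by $t(r)$. For the verification, fix $(c,c')\in C\times C'$; exactly as in Proposition~\ref{prod.TWOMenger}, the winning of $\varphi$ lets me recursively pick inning indices $n_r$ and selection indices $k_r$ with $cT\tilde b_a$ at the $a$ singled out by $k_r$ in inning $n_r$, and these data assemble into a single $\sigma$-play that TWO wins, so $c'T'b_{i'}'$ for some $i'\in F^a$; upwards $Q$-compatibility then yields $cTb_{i'}$, whence $(c,c')\,(T\otimes T')\,(b_{i'},b_{i'}')$ with $(b_{i'},b_{i'}')$ a pair TWO played. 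For $(a)$ and $(b)$, where only $\sigma$ is a strategy, a single-parameter tree indexed by ${}^{<\omega}\omega$ suffices: its nodes carry the countable $(\mathrm{Dom}(P),\mathrm{Dom}(Q))$-Lindel\"of selections in $(a)$, while in $(b)$ one first distributes the given innings through a partition $\omega=\{m^s_k\}$ and applies $\mathsf S_{\mathrm{fin}}(\mathrm{Dom}(P),\mathrm{Dom}(Q))$ at each node, as in the proof of Proposition~\ref{ONE}$(b)$; countability of the selection in $(a)$ is clear since $\bigcup_n({}^n\omega\times{}^n\omega)$ and the tree are countable.

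I expect the main obstacle to be the two-parameter bookkeeping of $(c)$ rather than the algebra of $\preceq$. The compatibility hypotheses enter only at two sharply localized places --- downwards $P$-compatibility to manufacture $\tilde b_a$, upwards $Q$-compatibility in the closing line --- and it is precisely the equality $B=D$ that allows a single order $\preceq$ on $B$ to mediate the domination of $P$ (through $R$) and of $Q$ (through $T$) in the same argument. The delicate point is instead to run $\varphi$ and $\sigma$ as honest, coherent plays over the one shared sequence of innings of the product game: one must guarantee that every branch of the $\varphi$-play is eventually visited and that the $a$'s it distinguishes are exactly those whose $\sigma$-plays are being extended, and this coherence is exactly what the index pairs $(s,t)$ and the growth condition $\min L^t_s>\max(\mathrm{im}(s))$ are engineered to secure.
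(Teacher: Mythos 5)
Your proposal is correct and follows essentially the same route as the paper's proof: the same abstraction of Proposition \ref{prod.TWOMenger} (the families $Z'_a$ in $\mathrm{Dom}(P')$, the element $\tilde b_a$ manufactured by iterated downwards $P$-compatibility below the finite $\sigma$-response, the resulting family $\{\tilde b_a:a\in A\}\in\mathrm{Dom}(P)$ handled by the Lindel\"of property, $\mathsf{S}_{\mathrm{fin}}$, or $\varphi$ respectively, and upwards $Q$-compatibility invoked exactly once at the end), organized over the same trees of $\sigma$-plays with the partition bookkeeping of Proposition \ref{prod.TWOMenger} for part $(c)$ --- which is precisely what the paper does, its proof of $(c)$ being stated as ``analogous to Proposition \ref{prod.TWOMenger}.'' The only deviation is cosmetic: for $(b)$ you index the tree by ${}^{<\omega}\omega$ with children coding (inning, selected-element) pairs, where the paper uses the explicit two-parameter tree $\mathcal{P}=\bigcup_{j\in\omega}({}^{j}\omega\times{}^{j}\omega)$; the two bookkeeping schemes are equivalent via a pairing function.
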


\begin{proof}

Let
$\sigma:\mbox{}^{<\omega}\mathrm{Dom}(P')\setminus\{\emptyset\}\rightarrow[B']^{<\aleph_0}\setminus\{\emptyset\}$
be a winning strategy for TWO in the game
$\mathsf{G}_{\mathrm{fin}}(\mathrm{Dom}(P'),\mathrm{Dom}(Q'))$.\\

$(a)$
Let $\{(b_i,b'_i):i\in I\}\in\mathrm{Dom}(P\otimes P')$ be fixed.
For each $x\in A$, define $Z_x=\{b'_i:i\in I_x\}$, where 
$I_x=\{i\in I:xRb_i\}$; note that $Z_x\in\mathrm{Dom}(P')$.

We will construct indexed families
$\langle F^s_x:s\in\mbox{}^{<\omega}\omega,x\in A\rangle$ and
$\langle a^s_n:s\in\mbox{}^{<\omega}\omega,n\in\omega\rangle$ with
\begin{itemize}
\item[$\cdot$]
$F^s_x\in[I_x]^{<\aleph_0}\setminus\{\emptyset\}$ for all
$s\in\mbox{}^{<\omega}\omega$ and $x\in A$; and
\item[$\cdot$]
$a^s_n\in A$ for all $s\in\mbox{}^{<\omega}\omega$ and $n\in\omega$.
\end{itemize}
We proceed recursively as follows.

Suppose that $k\in\omega$ is such that $\langle
a^t_n:t\in\mbox{}^{<k}\omega,n\in\omega\rangle$ and $\langle
F^t_x:t\in\mbox{}^{<k}\omega,x\in A\rangle$ have already been
constructed, and let $s\in\mbox{}^k\omega$ be fixed.
For each $x\in A$, let $F^s_x\in[I_x]^{<\aleph_0}\setminus\{\emptyset\}$
be such that
$$
\sigma\left(\left(
Z_{a^{s\upharpoonright j}_{s(j)}}
\right)_{j<k}\mbox{}^\frown
\left(
Z_x
\right)
\right)=\{b'_i:i\in F^s_x\}.
$$
Since $\preceq$ is downwards $P$-compatible, for each $x\in A$ we may
then choose $b^s_x\in B$ satisfying $xRb^s_x$ and $b^s_x\preceq b_i$
for all $i\in F^s_x$, thus obtaining a set $\{b^s_x:x\in
A\}\in\mathrm{Dom}(P)$. By our hypothesis, this set has a countable
subset that is an element of $\mathrm{Dom}(Q)$; let then
$(a^s_n)_{n\in\omega}$ be a sequence of elements of $A$ with
$\left\{b^s_{a^s_n}:n\in\omega\right\}\in\mathrm{Dom}(Q)$. This
concludes the recursive construction.

The proof will be finished once we show that
$$
\bigcup_{s\in\mbox{}^{<\omega}\omega}\bigcup_{n\in\omega}\{(b_i,b'_i):i\in
F^s_{a^s_n}\}\in\mathrm{Dom}(Q\otimes Q').
$$
To this end, let $(c,c')\in C\times C'$ be arbitrary. Pick
recursively, for each $k\in\omega$, an $n_k\in\omega$ such that
$cTb^{(n_j)_{j<k}}_{a^{(n_j)_{j<k}}_{n_k}}$ --- this is possible since
$\left\{b^{(n_j)_{j<k}}_{a^{(n_j)_{j<k}}_n}:n\in\omega\right\}\in\mathrm{Dom}(Q)$.
It follows that
\begin{center}
{\renewcommand{\arraystretch}{2.5}
\renewcommand{\tabcolsep}{0.25cm}
\begin{tabular}{c|c}
ONE & TWO \\

\hline

$Z_{a^\emptyset_{n_0}}$ &
$\left\{b'_i:i\in F^\emptyset_{a^\emptyset_{n_0}}\right\}$\\
$Z_{a^{(n_0)}_{n_1}}$ &
$\left\{b'_i:i\in F^{(n_0)}_{a^{(n_0)}_{n_1}}\right\}$\\
$Z_{a^{(n_0,n_1)}_{n_2}}$ &
$\left\{b'_i:i\in F^{(n_0,n_1)}_{a^{(n_0,n_1)}_{n_2}}\right\}$\\
$\vdots$ & $\vdots$\\
$Z_{a^{(n_j)_{j<k}}_{n_k}}$ &
$\left\{b'_i:i\in F^{(n_j)_{j<k}}_{a^{(n_j)_{j<k}}_{n_k}}\right\}$\\
$\vdots$ & $\vdots$
\end{tabular}}\\
\end{center}
is a play of
$\mathsf{G}_{\mathrm{fin}}(\mathrm{Dom}(P'),\mathrm{Dom}(Q'))$ in
which TWO makes use of the winning strategy $\sigma$; hence,
for some $k\in\omega$, there is $i\in
F^{(n_j)_{j<k}}_{a^{(n_j)_{j<k}}_{n_k}}$ such that $c'T'b'_{i}$.
Since $\preceq$ is upwards $Q$-compatible and
$cTb^{(n_j)_{j<k}}_{a^{(n_j)_{j<k}}_{n_k}}$, we also have
$cTb_{i}$;
thus, $(c,c')T\otimes T'(b_{i},b'_{i})$, as required.

$(b)$
Let $(Z_n)_{n\in\omega}$ be a sequence of elements of
$\mathrm{Dom}(P\otimes P')$. For each $n\in\omega$, write
$Z_n=\{(b^n_i,b'^n_i):i\in I_n\}$ and, for each
$x\in A$, define $I_n^x=\{i\in I_n:x R
b^n_i\}$ and $Z_n^x=\{b'^n_i:i\in
I_n^x\}\in\mathrm{Dom}(P')$.

Let
$\mathcal{P}=\bigcup_{j\in\omega}(\mbox{}^j\omega\times\mbox{}^j\omega)$.
Fix a partition
$\omega=\dot\bigcup_{(s,t)\in\mathcal{P}}L_{s,t}$
with $|L_{s,t}|=\aleph_0$ for all $(s,t)\in\mathcal{P}$, and write
$L_{s,t}=\{l^{s,t}_{m}:m\in\omega\}$ with $l^{s,t}_{m_1}\neq
l^{s,t}_{m_2}$ if $m_1\neq m_2$.
We will construct an indexed family $\langle
a_{s,t}:(s,t)\in\mathcal{P}\setminus\{(\emptyset,\emptyset)\}\rangle$
of elements of $A$ as follows.

Suppose that $h\in\omega$ is such that $a_{s,t}$ has already been
defined for all
$(s,t)\in\bigcup_{0<j\le h}(\mbox{}^j\omega\times\mbox{}^j\omega)$,
and let $(\tilde{s},\tilde{t})\in\mbox{}^h\omega\times\mbox{}^h\omega$
be fixed. For each $x\in A$ and each $m\in\omega$, let
$F^{\tilde{s},\tilde{t}}_m(x)\in\left[I^x_{l^{\tilde{s},\tilde{t}}_m}\right]^{<\aleph_0}\setminus\{\emptyset\}$
be such that
$$
\sigma\left(\left(Z_{l^{\tilde{s}\upharpoonright k,\tilde{t}\upharpoonright k}_{\tilde{t}(k)}}
^{a_{\tilde{s}\upharpoonright(k+1),\tilde{t}\upharpoonright(k+1)}}\right)_{k<h}
\mbox{}^\frown\left(Z^x_{l^{\tilde{s},\tilde{t}}_m}
\right)\right)=
\left\{b'^{l^{\tilde{s},\tilde{t}}_m}_i:i\in F^{\tilde{s},\tilde{t}}_m(x)\right\};
$$
then, making use of the fact that $\preceq$ is downwards
$P$-compatible, pick $b^{\tilde{s},\tilde{t}}_m(x)\in B$ satisfying
$xRb^{\tilde{s},\tilde{t}}_m(x)$ and
$b^{\tilde{s},\tilde{t}}_m(x)\preceq b^{l^{\tilde{s},\tilde{t}}_m}_i$
for all $i\in F^{\tilde{s},\tilde{t}}_m(x)$. Since
$\{b^{\tilde{s},\tilde{t}}_m(x):x\in A\}\in\mathrm{Dom}(P)$ for each
$m\in\omega$, we can apply
$\mathsf{S}_{\mathrm{fin}}(\mathrm{Dom}(P),\mathrm{Dom}(Q))$ and
thus obtain a sequence
$(\mathcal{F}^{\tilde{s},\tilde{t}}_m)_{m\in\omega}$ of finite
nonempty subsets of $A$ satisfying
$\bigcup_{m\in\omega}\{b^{\tilde{s},\tilde{t}}_m(x):x\in\mathcal{F}^{\tilde{s},\tilde{t}}_m\}\in\mathrm{Dom}(Q)$.
Now, for each $m\in\omega$, write
$\mathcal{F}^{\tilde{s},\tilde{t}}_m=\{a_{\tilde{s}^\smallfrown(m),\tilde{t}^\smallfrown(n)}:n\in\omega\}$
with each element of $\mathcal{F}^{\tilde{s},\tilde{t}}_m$ appearing
infinitely many times in the listing.

This completes the definition of $\langle
a_{s,t}:(s,t)\in\mathcal{P}\setminus\{(\emptyset,\emptyset)\}\rangle$.
We now claim that the family
$$\bigcup_{(s,t)\in\mathcal{P}}\bigcup_{m,n\in\omega}\left\{\left(b^{l^{s,t}_m}_i,b'^{l^{s,t}_m}_i\right):i\in
F^{s,t}_m(a_{s^\smallfrown(m),t^\smallfrown(n)})\right\}$$
is an element of $\mathrm{Dom}(Q\otimes Q')$.

Let then $(c,c')\in C\times C'$ be arbitrary.
Recursively for each $h\in\omega$, choose $m_h,n_h\in\omega$ such that
$cTb^{(m_k)_{k<h},(n_k)_{k<h}}_{m_h}
(a_{(m_k)_{k\le h},(n_k)_{k\le h}})$.
Since $\preceq$ is upwards $Q$-compatible,
this implies that
$cTb^{l^{(m_k)_{k<h},(n_k)_{k<h}}_{m_h}}_i$ for all $i\in
F^{(m_k)_{k<h},(n_k)_{k<h}}_{m_h}(a_{(m_k)_{k\le h},(n_k)_{k\le h}})$; hence,
it suffices to verify that, for
some $h\in\omega$, there is $i\in
F^{(m_k)_{k<h},(n_k)_{k<h}}_{m_h}(a_{(m_k)_{k\le h},(n_k)_{k\le h}})$
with $c'T'b'^{l^{(m_k)_{k<h},(n_k)_{k<h}}_{m_h}}_i$. Indeed, if this
were not the case, 
\begin{center}
{\renewcommand{\arraystretch}{2.5}
\renewcommand{\tabcolsep}{0.25cm}
\begin{tabular}{c|c}
ONE & TWO \\

\hline

$Z_{l^{\emptyset,\emptyset}_{n_0}}^{a_{(m_0),(n_0)}}$ &
$\left\{b'^{l^{\emptyset,\emptyset}_{m_0}}_i:i\in F^{\emptyset,\emptyset}_{m_0}(a_{(m_0),(n_0)})\right\}$\\
$Z_{l^{(m_0),(n_0)}_{n_1}}^{a_{(m_0,m_1),(n_0,n_1)}}$ &
$\left\{b'^{l^{(m_0),(n_0)}_{m_1}}_i:i\in F^{(m_0),(n_0)}_{m_1}(a_{(m_0,m_1),(n_0,n_1)})\right\}$\\
$\vdots$ & $\vdots$\\
$Z_{l^{(m_k)_{k<h},(n_k)_{k<h}}_{n_h}}^{a_{(m_k)_{k\le h},(n_k)_{k\le h}}}$ &
$\left\{b'^{l^{(m_k)_{k<h},(n_k)_{k<h}}_{m_h}}_i:i\in
F^{(m_k)_{k<h},(n_k)_{k<h}}_{m_h}(a_{(m_k)_{k\le h},(n_k)_{k\le h}})\right\}$\\
$\vdots$ & $\vdots$
\end{tabular}}\\
\end{center}
would be a play of
$\mathsf{G}_{\mathrm{fin}}(\mathrm{Dom}(P'),\mathrm{Dom}(Q'))$ in
which TWO follows the winning strategy $\sigma$ and loses.\\

$(c)$ Analogous to the proof of Proposition \ref{prod.TWOMenger}.
\end{proof}

The following consequence of Proposition \ref{Gfin} was originally
proven by Telg\'arsky under the extra assumption that the space is
regular; it can be obtained by putting together Corollary 14.14 and
Theorem 14.12 of \cite{telg1} and Corollary 3 of \cite{topsoe}.
Here we follow the standard terminology and refer to the game
$\mathsf{G}_{\mathrm{fin}}(\mathcal{O},\mathcal{O})$ as
\emph{the Menger game}; a topological space is \emph{Menger}
\cite{hur} if it satisfies
$\mathsf{S}_{\mathrm{fin}}(\mathcal{O},\mathcal{O})$.

\begin{corol}[Telg\'arsky \cite{telg1,topsoe}, for regular spaces]
\label{menger}

If TWO has a winning strategy in the Menger game on a topological
space $X$, then $X$ is both productively Lindel\"of and productively
Menger.

\end{corol}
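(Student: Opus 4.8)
The plan is to deduce the whole statement from a single application of Proposition \ref{Gfin}, letting $X$ play the role of the \emph{primed} relations (since it is $X$ that carries the winning strategy) and letting an arbitrary partner space $Y$ play the role of the unprimed ones. Concretely, I would set $P'=Q'=(X,\tau,\in)$ and $P=Q=(Y,\rho,\in)$, so that $\mathrm{Dom}(P')=\mathrm{Dom}(Q')=\mathcal{O}_X$ while $\mathrm{Dom}(P)=\mathrm{Dom}(Q)=\mathcal{O}_Y$. The hypothesis that TWO wins the Menger game on $X$ is then literally the statement that TWO has a winning strategy in $\mathsf{G}_{\mathrm{fin}}(\mathrm{Dom}(P'),\mathrm{Dom}(Q'))$, which is exactly the standing assumption of Proposition \ref{Gfin}.

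Before invoking the proposition I must verify its structural hypotheses. The second coordinates of $P$ and $Q$ coincide, so $B=D=\rho$, and I would take $\preceq$ to be the inclusion order $\subseteq$ on $\rho$. This order is downwards $P$-compatible because, whenever $y\in U_1$ and $y\in U_2$ with $U_1,U_2\in\rho$, the set $\widetilde{U}=U_1\cap U_2$ is an open neighborhood of $y$ contained in both $U_1$ and $U_2$; and it is upwards $Q$-compatible because $y\in U_1\subseteq U_2$ immediately forces $y\in U_2$. Thus every assumption of Proposition \ref{Gfin} is in place.

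With this set-up, part $(a)$ of Proposition \ref{Gfin} converts the Lindel\"of-ness of $Y$ --- which is precisely $(\mathrm{Dom}(P),\mathrm{Dom}(Q))$-Lindel\"of --- into $(\mathrm{Dom}(P\otimes P'),\mathrm{Dom}(Q\otimes Q'))$-Lindel\"of, and part $(b)$ converts the Menger property of $Y$ --- which is precisely $\mathsf{S}_{\mathrm{fin}}(\mathrm{Dom}(P),\mathrm{Dom}(Q))$ --- into $\mathsf{S}_{\mathrm{fin}}(\mathrm{Dom}(P\otimes P'),\mathrm{Dom}(Q\otimes Q'))$. The one remaining point, which I expect to demand the most care, is the passage from these two conclusions to the genuine assertions that $X\times Y$ is Lindel\"of and Menger. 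As noted in Example \ref{ex.prod2}, $\mathrm{Dom}(P\otimes P')$ is not the set of all open covers of the product but only the covers by basic rectangles $U\times V$ with $U\in\rho$ and $V\in\tau$. To bridge this gap I would invoke the standard refinement argument: every open cover of the product is refined by a cover consisting of basic rectangles, each lying inside some member of the original cover; conversely, from a countable (respectively, finite-per-inning) selection of basic rectangles one recovers a corresponding selection of original open sets by choosing, for each selected rectangle, a member of the cover containing it. Hence both the $(\cdot)$-Lindel\"of property and $\mathsf{S}_{\mathrm{fin}}(\cdot,\cdot)$ are insensitive to restricting attention to basic rectangles, so the conclusions of Proposition \ref{Gfin} are equivalent to $Y\times X$ being Lindel\"of and Menger, respectively. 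Finally, since $X\times Y$ and $Y\times X$ are homeomorphic, both properties transfer to $X\times Y$; as $Y$ was an arbitrary Lindel\"of (respectively, Menger) space, this shows that $X$ is productively Lindel\"of and productively Menger.
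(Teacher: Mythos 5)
Your proof is correct and takes essentially the same route as the paper's, which likewise applies Proposition \ref{Gfin}$(a)$--$(b)$ with $P'=Q'=(X,\tau,\in)$, $P=Q=(Y,\rho,\in)$ and $\preceq$ being inclusion on $\rho$. The details you make explicit --- verifying the two compatibility conditions for $\subseteq$, the refinement argument passing between covers by basic rectangles and arbitrary open covers of the product (cf.\ Example \ref{ex.prod2}), and the harmless swap $Y\times X\cong X\times Y$ --- are precisely what the paper leaves implicit, and they are all accurate.
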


\begin{proof}

Let $Y$ be a Lindel\"of (respectively, Menger) space. Apply
Proposition \ref{Gfin}$(a)$ (respectively, $(b)$) with
$P'=Q'=(X,\tau,\in)$, $P=Q=(Y,\rho,\in)$ and
$U\preceq V\leftrightarrow U\subseteq V$.
\end{proof}

As in the observation made after Corollary \ref{rothberger}, it is
worth comparing Corollary \ref{menger} with Theorem 11(2) of
\cite{bab.prod}.

It is also worth remarking that, assuming the Continuum Hypothesis,
one can construct a Sierpi\'nski set $S\subseteq\mathbb{R}$ such that
$\mathbb{R}\setminus\mathbb{Q}$ is a continuous image of $S\times S$
\cite[Theorem 2.12]{just}, which implies that $S\times S$ is not
Menger. By Proposition 14 of \cite{bab.prod}, TWO has a winning
strategy in the ``length-($\omega+\omega$)'' variation of the Menger
game played on every Sierpi\'nski set. This shows that the existence
of a winning strategy for TWO in this longer version of the Menger
game, although stronger than the Menger property, is not strong enough
to imply its productivity.

\begin{corol}
\label{omega-fin}

Let $X$ be a topological space on which TWO has a winning
strategy in the game $\mathsf G_{\mathrm{fin}}(\Omega,\Omega)$. Then:
\begin{itemize}
\item[$(a)$]
if $Y$ is a topological space such that $\Omega_Y$-Lindel\"of holds,
then $\Omega_{X\times Y}$-Lindel\"of holds;
\item[$(a')$]
if $Y$ is a topological space such that every finite power of $Y$ is
Lindel\"of, then every finite power of $X\times Y$ is Lindel\"of;
\item[$(b)$]
the product $X\times Y$ satisfies
$\mathsf{S}_{\mathrm{fin}}(\Omega,\Omega)$ for every topological space
$Y$ satisfying $\mathsf{S}_{\mathrm{fin}}(\Omega,\Omega)$;
\item[$(b')$]
if $Y$ is a topological space such that every finite power of $Y$ is
Menger, then every finite power of $X\times Y$ is Menger;
\item[$(c)$]
if TWO has a winning strategy in the game $\mathsf
G_{\mathrm{fin}}(\Omega,\Omega)$ on a topological space $Y$, then he
has a winning strategy in $\mathsf G_{\mathrm{fin}}(\Omega,\Omega)$ on
the product $X\times Y$.
\end{itemize}
\end{corol}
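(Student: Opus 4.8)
The plan is to apply Proposition \ref{Gfin} with $P'=Q'=([X]^{<\aleph_0},\tau,\subseteq)$ and $P=Q=([Y]^{<\aleph_0},\rho,\subseteq)$, so that $\mathrm{Dom}(P')=\mathrm{Dom}(Q')=\Omega_X$ and $\mathrm{Dom}(P)=\mathrm{Dom}(Q)=\Omega_Y$. The common second coordinate is then $B=D=\rho$, and I would take $\preceq$ to be ordinary set inclusion $\subseteq$ on $\rho$. Before invoking the proposition I verify its two compatibility hypotheses: inclusion is downwards $P$-compatible because, whenever a finite $G\subseteq Y$ is contained in open sets $V_1,V_2$, the open set $V_1\cap V_2$ still contains $G$ and lies below both; and it is upwards $Q$-compatible since enlarging an open set preserves containment of a finite set, which is just transitivity of $\subseteq$. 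The remaining hypothesis, that TWO wins $\mathsf{G}_{\mathrm{fin}}(\mathrm{Dom}(P'),\mathrm{Dom}(Q'))$, is precisely the assumption that TWO wins $\mathsf{G}_{\mathrm{fin}}(\Omega_X,\Omega_X)$.

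Since $Q=P$ and $Q'=P'$ we have $Q\otimes Q'=P\otimes P'$, so Proposition \ref{Gfin}$(a)$, $(b)$, $(c)$ deliver, respectively: that $\mathrm{Dom}(P\otimes P')$-Lindel\"of holds whenever $\Omega_Y$-Lindel\"of holds; that $\mathsf{S}_{\mathrm{fin}}(\mathrm{Dom}(P\otimes P'),\mathrm{Dom}(P\otimes P'))$ holds whenever $\mathsf{S}_{\mathrm{fin}}(\Omega_Y,\Omega_Y)$ holds; and that TWO wins $\mathsf{G}_{\mathrm{fin}}(\mathrm{Dom}(P\otimes P'),\mathrm{Dom}(P\otimes P'))$ whenever TWO wins the corresponding game on $Y$. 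These are skeletons of $(a)$, $(b)$, $(c)$, but phrased in terms of $\mathrm{Dom}(P\otimes P')$ rather than $\Omega_{X\times Y}$.

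The step I expect to require the most care is the bridge between $\mathrm{Dom}(P\otimes P')$ and $\Omega_{X\times Y}$, already flagged in the proof of Corollary \ref{omega-1}. Here $\mathrm{Dom}(P\otimes P')$ is exactly the collection of $\omega$-covers of $X\times Y$ by basic open rectangles $U\times V$ (with $U\in\tau$, $V\in\rho$); equivalently, those families of rectangles in which every finite rectangle $H\times G$, with $H\in[X]^{<\aleph_0}$ and $G\in[Y]^{<\aleph_0}$, sits inside a single member. Any such family is in particular an $\omega$-cover of $X\times Y$, and conversely every $\mathcal{U}\in\Omega_{X\times Y}$ refines to one: given $H\times G\subseteq W$ for some $W\in\mathcal{U}$, a finite tube lemma produces a basic $U\times V$ with $H\times G\subseteq U\times V\subseteq W$, obtained by intersecting the finitely many first-coordinate neighborhoods and the finitely many second-coordinate neighborhoods chosen at the points of $H\times G$. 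Combining this refinement with a fixed choice of a member of $\mathcal{U}$ above each selected rectangle lets me transfer every statement: a countable, respectively finitely-selected, basic-rectangle $\omega$-subcover pulls back to a countable, respectively finitely-selected, $\omega$-subcover drawn from the original covers, which yields $(a)$ and $(b)$; and applying the same pullback move by move converts TWO's winning strategy for $\mathrm{Dom}(P\otimes P')$ into a winning strategy for $\Omega_{X\times Y}$, yielding $(c)$. The one thing to check is that the refinement and the pullback choice depend only on ONE's moves, so that the induced strategy is legitimate.

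Finally, $(a')$ and $(b')$ follow formally from $(a)$ and $(b)$ by the classical characterizations that $\Omega_Z$-Lindel\"of is equivalent to all finite powers of $Z$ being Lindel\"of, and that $\mathsf{S}_{\mathrm{fin}}(\Omega_Z,\Omega_Z)$ is equivalent to all finite powers of $Z$ being Menger. Applying these with $Z=Y$ on the hypothesis side and $Z=X\times Y$ on the conclusion side converts the $\omega$-cover statements of $(a)$ and $(b)$ into the finite-power statements of $(a')$ and $(b')$.
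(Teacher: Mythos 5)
Your proposal is correct and takes essentially the same route as the paper, which likewise applies Proposition \ref{Gfin} with $P'=Q'=([X]^{<\aleph_0},\tau,\subseteq)$, $P=Q=([Y]^{<\aleph_0},\rho,\subseteq)$ and $U\preceq V\leftrightarrow U\subseteq V$, passes between $\mathrm{Dom}(P\otimes P')$ and $\Omega_{X\times Y}$ via the basic-rectangle refinement observation recorded in the proof of Corollary \ref{omega-1}, and obtains $(a')$ and $(b')$ from the classical characterizations of \cite{g-n} and \cite[Theorem 3.9]{just}. Your write-up simply makes explicit the compatibility verifications and the pullback-of-refinements argument that the paper leaves to the reader.
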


\begin{proof}

First, we recall that a topological space $Y$ is Lindel\"of in every
finite power if and only if
$\Omega_Y$-Lindel\"of holds \cite{g-n},
and that it is Menger in every finite power if and only if
$\mathsf S_{\mathrm{fin}}(\Omega_Y,\Omega_Y)$ holds
\cite[Theorem 3.9]{just}.
Now apply Proposition \ref{Gfin} with
$P'=Q'=([X]^{<\aleph_0},\tau,\subseteq)$,
$P=Q=([Y]^{<\aleph_0},\rho,\subseteq)$ and
$U\preceq V\leftrightarrow U\subseteq V$, having in mind the
observation made in the proof of Corollary \ref{omega-1}.
\end{proof}

For a topological space $X$, the space of all real-valued continuous
functions on $X$ regarded as a subspace of the (Tychonoff)
power $\mathbb{R}^X$ is denoted by $C_p(X)$.
Recall that a topological space has \emph{countable fan tightness}
\cite{arh2} at a point $x$ if
$\mathsf{S}_{\mathrm{fin}}(\Omega_x,\Omega_x)$ holds.

\begin{corol}
\label{Omega_o-fin}

Let $X$ and $Y$ be completely regular spaces. Assume that TWO has a
winning strategy in the game
$\mathsf{G}_{\mathrm{fin}}(\Omega_{\mathbf{0}},\Omega_{\mathbf{0}})$
played on $C_p(X)$.
\begin{itemize}
\item[$(a)$]
If $C_p(Y)$ is countably tight, then $C_p(X)\times C_p(Y)$
is also countably tight.
\item[$(b)$]
If $C_p(Y)$ has countable fan tightness, then the product
$C_p(X)\times C_p(Y)$ also has countable fan tightness.
\item[$(c)$]
If TWO has a winning strategy in
$\mathsf{G}_{\mathrm{fin}}(\Omega_{\mathbf{0}},\Omega_{\mathbf{0}})$
played on $C_p(Y)$, then TWO has a winning strategy in
$\mathsf{G}_{\mathrm{fin}}(\Omega_{(\mathbf{0},\mathbf{0})},\Omega_{(\mathbf{0},\mathbf{0})})$
played on the product $C_p(X)\times C_p(Y)$.
\end{itemize}
\end{corol}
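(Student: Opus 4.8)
The plan is to derive all three parts from Proposition \ref{Gfin}, exactly as Corollary \ref{omega-fin} is derived from it: I choose relations whose dominating families are the relevant $\Omega$-families at the zero functions, and I exhibit a single order on the codomain that is simultaneously downwards $P$-compatible and upwards $Q$-compatible. Write $\mathbf{0}$ for the constant zero function in each factor, and for finite $F\subseteq Y$ and $\epsilon>0$ put $[F,\epsilon]_Y=\{g\in C_p(Y):|g(y)|<\epsilon\text{ for all }y\in F\}$, with $[F,\epsilon]_X$ defined analogously. Since the game $\mathsf{G}_{\mathrm{fin}}$ and the properties $(\mathcal{A},\mathcal{B})$-Lindel\"of and $\mathsf{S}_{\mathrm{fin}}(\mathcal{A},\mathcal{B})$ depend only on the families $\mathrm{Dom}(P)$, I am free to represent $\Omega_{\mathbf{0}}$ by the relation built from the standard neighborhood base at $\mathbf{0}$ rather than from all of $\tau_{\mathbf{0}}$. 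Accordingly I set
$$P'=Q'=(\{[F,\epsilon]_X:F\in[X]^{<\aleph_0},\epsilon>0\},C_p(X),\ni),\quad P=Q=(\{[F,\epsilon]_Y:F\in[Y]^{<\aleph_0},\epsilon>0\},C_p(Y),\ni).$$
Because these are neighborhood bases at the respective zero functions, $\mathrm{Dom}(P')=\mathrm{Dom}(Q')=\Omega_{\mathbf{0}}$ on $C_p(X)$ and $\mathrm{Dom}(P)=\mathrm{Dom}(Q)=\Omega_{\mathbf{0}}$ on $C_p(Y)$; moreover the product-basic neighborhoods $[F,\epsilon]_X\times[G,\delta]_Y$ form a base at $(\mathbf{0},\mathbf{0})$, so $\mathrm{Dom}(P\otimes P')=\mathrm{Dom}(Q\otimes Q')=\Omega_{(\mathbf{0},\mathbf{0})}$. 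The hypothesis of the corollary is precisely that TWO wins $\mathsf{G}_{\mathrm{fin}}(\mathrm{Dom}(P'),\mathrm{Dom}(Q'))$, and $B=D=C_p(Y)$ as required.

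The key step is to produce the compatible order on $B=D=C_p(Y)$; this is the $C_p$-analogue of the inclusion order $U\preceq V\Leftrightarrow U\subseteq V$ used in Corollaries \ref{menger} and \ref{omega-fin}. I would define $g\preceq h$ iff $|h(y)|\le|g(y)|$ for every $y\in Y$. Upwards $Q$-compatibility is immediate: if $g_1\in[F,\epsilon]_Y$ and $g_1\preceq g_2$ then $|g_2(y)|\le|g_1(y)|<\epsilon$ on $F$, so $g_2\in[F,\epsilon]_Y$. For downwards $P$-compatibility, given $g_1,g_2\in[F,\epsilon]_Y$ the function $\tilde{g}=\max(|g_1|,|g_2|)$ is again continuous, lies in $[F,\epsilon]_Y$, and satisfies $\tilde{g}\preceq g_1$ and $\tilde{g}\preceq g_2$; here it is essential that $C_p(Y)$ is closed under pointwise maxima and absolute values, which is the structural feature making the $C_p$-setting work. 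I expect this verification --- in particular, keeping the common lower bound both continuous and inside the prescribed basic neighborhood --- to be the main point of the argument. Note that $\preceq$ is only a \emph{preorder}, since $g$ and $-g$ have equal absolute values; but the proof of Proposition \ref{Gfin} uses only reflexivity, transitivity, and the two compatibility clauses, never antisymmetry, so this causes no real difficulty (one may also pass to a refining partial order if a genuinely antisymmetric relation is wanted).

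With these choices, Proposition \ref{Gfin}$(a)$, $(b)$, $(c)$ yield the three conclusions once the topological properties are read off the $\mathrm{Dom}$-families. Since $C_p(X)$, $C_p(Y)$ and $C_p(X)\times C_p(Y)$ are topological groups under pointwise addition, they are homogeneous, so countable tightness and countable fan tightness at a single point are equivalent to the corresponding global properties. Thus ``$C_p(Y)$ is countably tight'' is $\Omega_{\mathbf{0}}$-Lindel\"of, i.e. $(\mathrm{Dom}(P),\mathrm{Dom}(Q))$-Lindel\"of; ``$C_p(Y)$ has countable fan tightness'' is $\mathsf{S}_{\mathrm{fin}}(\Omega_{\mathbf{0}},\Omega_{\mathbf{0}})$; and the analogous readings hold at $(\mathbf{0},\mathbf{0})$ for the product. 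Part $(a)$ then follows from Proposition \ref{Gfin}$(a)$, part $(b)$ from Proposition \ref{Gfin}$(b)$, and part $(c)$ from Proposition \ref{Gfin}$(c)$. Complete regularity of $X$ and $Y$ enters only to keep $C_p(X)$ and $C_p(Y)$ in their standard form, ensuring the neighborhood bases and the homogeneity arguments behave as expected.
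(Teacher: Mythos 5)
Your proof is correct, but it takes a genuinely different route from the paper's. The paper does not apply Proposition \ref{Gfin} inside the function spaces at all: it deduces the corollary from Corollary \ref{omega-fin} (the $\Omega$-cover statement about the ground spaces $X$ and $Y$) via three translation theorems --- Scheepers' result that TWO wins $\mathsf{G}_{\mathrm{fin}}(\Omega,\Omega)$ on $X$ iff TWO wins $\mathsf{G}_{\mathrm{fin}}(\Omega_{\mathbf{0}},\Omega_{\mathbf{0}})$ on $C_p(X)$, Arkhangel'skii's theorem that countable fan tightness of $C_p(X)$ is equivalent to all finite powers of $X$ being Menger, and the Arkhangel'skii--Pytkeev theorem identifying countable tightness of $C_p(X)$ with all finite powers of $X$ being Lindel\"of. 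You instead verify the order hypothesis of Proposition \ref{Gfin} directly in $C_p(Y)$, and this is the one place where new content is required: your choice of the neighborhood-base relation (legitimate, since the games and selection properties depend only on the $\mathrm{Dom}$-families, and downwards compatibility would genuinely fail for arbitrary open neighborhoods of $\mathbf{0}$) together with the preorder $g\preceq h\leftrightarrow |h|\le|g|$ pointwise, whose compatibility rests on $C_p(Y)$ being closed under $\max$ and absolute value, is exactly right, and you correctly observe that the proof of Proposition \ref{Gfin} uses only reflexivity and transitivity of $\preceq$ (iterated pairwise meets plus transitivity for finite sets), never antisymmetry, so the preorder issue is harmless --- one can even force antisymmetry by adding a small positive constant to $\max(|g_1|,|g_2|)$ while staying inside $[F,\epsilon]_Y$. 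Comparing the two: the paper's proof is shorter given the cited literature, but it genuinely needs complete regularity (the translation theorems require it) and some bookkeeping since $C_p(X)\times C_p(Y)=C_p(X\oplus Y)$; your direct argument bypasses all three external theorems and, as you half-notice in your last sentence, never actually uses complete regularity, so it proves a formally more general statement. Your construction also illuminates the paper's subsequent remark that the order hypothesis in Proposition \ref{Gfin} is essential: Uspenski\u\i's example ($C_p(\mathbb{R})$ is not productively countably tight even though TWO wins the relevant game) involves partners that are not of the form $C_p(Y)$, and your lattice-theoretic order explains precisely why productivity does hold within the class of function spaces.
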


\begin{proof}

This follows from Corollary \ref{omega-fin}, in view of the following
results:
\begin{itemize}
\item[$\cdot$]
for a completely regular space $X$, TWO has a winning strategy in
$\mathsf{G}_{\mathrm{fin}}(\Omega,\Omega)$ on $X$ if and only if TWO
has a winning strategy in
$\mathsf{G}_{\mathrm{fin}}(\Omega_{\mathbf{0}},\Omega_{\mathbf{0}})$
on $C_p(X)$ \cite[Theorem 26]{sch-tight};
\item[$\cdot$]
a completely regular space $X$ is Menger in every finite power if and only if
$C_p(X)$ has countable fan tightness \cite[Theorem 4]{arh2};
\item[$\cdot$]
a completely regular space $X$ is Lindel\"of in every finite power if and only
if $C_p(X)$ is countably tight --- see \cite[Theorem 2]{arh1} and
\cite[Theorem 1]{pyt}.
\end{itemize}
\end{proof}

The hypothesis about the partial order $\preceq$ in Proposition
\ref{Gfin} is essential and there is no possibility of obtaining a
result as general as Proposition \ref{G1} for properties involving
finite selections. This can be seen, for example, by considering the
same properties appearing in Corollary \ref{Omega_o-fin}: It follows
from a result of Uspenski\u\i\mbox{} \cite[Theorem 1]{uspenskii} that
a space $X$ is Lindel\"of in the $G_\delta$-topology is and only if
$C_p(X)$ is productively countably tight; note that, by Theorem 26 of
\cite{sch-tight}, TWO has a winning strategy in
$\mathsf{G}_{\mathrm{fin}}(\Omega_{\mathbf{0}},\Omega_{\mathbf{0}})$
on $C_p(\mathbb{R})$ since TWO has a winning strategy in
$\mathsf{G}_{\mathrm{fin}}(\Omega,\Omega)$ on $\mathbb{R}$
\cite[Lemma 2]{bella-SS}, and yet $C_p(\mathbb{R})$ is not
productively countably tight in view of Uspenski\u\i\mbox{}'s result.

\section{$\gamma$-dominating sequences}

In this section we study games defined in terms of
\emph{$\gamma$-dominating sequences}, which allow us
to express convergence-like properties.

We start off with a simple fact:

\begin{lemma}
\label{equiv.gamma}

Let $P=(A,B,R)$ be a relation and
$(z_n)_{n\in\omega}\in\mbox{}^\omega B$. The following assertions
are equivalent:

\begin{itemize}

\item[$(a)$]
$\{n\in\omega:\neg (aRz_n)\}$ is finite for all $a\in A$;

\item[$(b)$]
$\{z_n:n\in X\}\in\mathrm{Dom}(P)$ for all $X\in[\omega]^{\aleph_0}$.

\end{itemize}

\end{lemma}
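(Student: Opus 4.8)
The plan is to prove the two implications $(a)\Rightarrow(b)$ and $(b)\Rightarrow(a)$ separately, with the second direction handled by contraposition.

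For $(a)\Rightarrow(b)$, I would fix an arbitrary infinite set $X\in[\omega]^{\aleph_0}$ and verify that $\{z_n:n\in X\}\in\mathrm{Dom}(P)$; by Definition \ref{def.Dom} this means checking that every $a\in A$ is dominated in $R$ by some element of this set. So let $a\in A$ be arbitrary. By hypothesis $(a)$, the set $\{n\in\omega:\neg(aRz_n)\}$ is finite, hence its complement $\{n\in\omega:aRz_n\}$ is cofinite and in particular infinite. Since $X$ is infinite, it cannot be contained in the finite set $\{n\in\omega:\neg(aRz_n)\}$, so there exists $n\in X$ with $aRz_n$. Thus $z_n\in\{z_n:n\in X\}$ dominates $a$ in $R$, and since $a$ was arbitrary we conclude $\{z_n:n\in X\}\in\mathrm{Dom}(P)$.

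For $(b)\Rightarrow(a)$, I would argue by contraposition: assume $(a)$ fails and produce a witness to the failure of $(b)$. If $(a)$ fails, there is some $a\in A$ for which the set $X=\{n\in\omega:\neg(aRz_n)\}$ is infinite, so $X\in[\omega]^{\aleph_0}$. For this $X$, no element $z_n$ with $n\in X$ satisfies $aRz_n$ (by the very definition of $X$), so $a$ is not dominated in $R$ by any element of $\{z_n:n\in X\}$. Hence $\{z_n:n\in X\}\notin\mathrm{Dom}(P)$, which shows $(b)$ fails for this particular $X$.

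I do not anticipate any genuine obstacle here, since the statement is essentially a reformulation of the definitions; the lemma amounts to the elementary observation that a subset of $\omega$ is cofinite exactly when it meets every infinite set. The only point demanding a little care is the direction $(b)\Rightarrow(a)$, where one must resist trying to prove $(a)$ directly and instead recognize that the natural witness set is precisely the hypothesized-infinite ``bad set'' $\{n:\neg(aRz_n)\}$ itself.
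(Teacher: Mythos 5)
Your proof is correct and complete; note that the paper itself states this lemma without proof, presenting it as ``a simple fact,'' and your argument is exactly the routine verification the authors leave implicit. Both directions are handled properly, including the key step in $(b)\Rightarrow(a)$ of taking the witness $X$ to be the infinite set $\{n\in\omega:\neg(aRz_n)\}$ itself.
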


\begin{defin}
\label{def.gamma}

Let $P=(A,B,R)$ be a relation.
A sequence $(z_n)_{n\in\omega}\in\mbox{}^\omega B$ is
\emph{$\gamma$-dominating in $P$} if the conditions in Lemma
\ref{equiv.gamma} hold. The family of all $\gamma$-dominating
sequences in $P$ will be denoted by $\mathrm{Dom}_\gamma(P)$.

\end{defin}

We shall now consider variations of the games $\mathsf{G}$ and
$\mathsf{G}_1$ involving $\gamma$-dominating sequences.

\begin{defin}
\label{def.G_gamma}

Let $P$ and $Q$ be relations.
\begin{itemize}
\item[$(a)$]
The game $\mathsf{G}_\gamma(P,Q)$ is
played according to the same rules as $\mathsf{G}(P,Q)$, but now
ONE wins if $(b_n)_{n\in\omega}\in\mathrm{Dom}_\gamma(Q)$, and
loses otherwise.
\item[$(b)$]
In a slight abuse of notation (cf. Definition \ref{def.G1}), we shall
designate by $\mathsf{G}_1(\mathrm{Dom}(P),\mathrm{Dom}_\gamma(Q))$
the game played according to the same rules as
$\mathsf{G}_1(\mathrm{Dom}(P),\mathrm{Dom}(Q))$, but in which the
winner is TWO if $(b_n)_{n\in\omega}\in\mathrm{Dom}_\gamma(Q)$ and
ONE otherwise.
\end{itemize}
\end{defin}

The games defined above satisfy the following duality theorem,
which parallels Theorem \ref{dual}.

\begin{thm}[Galvin \cite{galvin}]
\label{dual-gamma}

The games $\mathsf{G}_\gamma(P,Q)$ and
$\mathsf{G}_1(\mathrm{Dom}(P),\mathrm{Dom}_\gamma(Q))$ are dual for
all relations $P$ and $Q$.

\end{thm}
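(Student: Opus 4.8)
The plan is to mirror the proof of Theorem \ref{dual} as closely as possible, since $\mathsf{G}_\gamma(P,Q)$ and $\mathsf{G}_1(\mathrm{Dom}(P),\mathrm{Dom}_\gamma(Q))$ differ from $\mathsf{G}(P,Q)$ and $\mathsf{G}_1(\mathrm{Dom}(P),\mathrm{Dom}(Q))$ only in the winning condition for the ``success'' set: $\{b_n:n\in\omega\}\in\mathrm{Dom}(Q)$ is replaced by $(b_n)_{n\in\omega}\in\mathrm{Dom}_\gamma(Q)$. There are four implications to verify, and I will focus on the one the authors emphasized, namely that a winning strategy for TWO in $\mathsf{G}_1(\mathrm{Dom}(P),\mathrm{Dom}_\gamma(Q))$ yields one for ONE in $\mathsf{G}_\gamma(P,Q)$. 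The combinatorial core is exactly the observation $(\dagger)$ from the proof of Theorem \ref{dual}, which concerns only the relation $P$ and TWO's strategy $\sigma$, and so is completely insensitive to whether the target is $\mathrm{Dom}(Q)$ or $\mathrm{Dom}_\gamma(Q)$; I would restate and reuse it verbatim.

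The construction of ONE's strategy in $\mathsf{G}_\gamma(P,Q)$ then proceeds identically: using $(\dagger)$, each move $b_n$ that TWO plays is realized as $\sigma((Z_k)_{k\le n})$ for suitable $Z_0,\dots,Z_n\in\mathrm{Dom}(P)$, and ONE's next move is the dominating point $a_{(Z_0,\dots,Z_n)}\in A$ supplied by $(\dagger)$. The only place the argument is sensitive to the modified winning condition is at the very end, where one reads off the conclusion: since $\sigma$ is winning for TWO in $\mathsf{G}_1(\mathrm{Dom}(P),\mathrm{Dom}_\gamma(Q))$, the resulting sequence satisfies $(b_n)_{n\in\omega}=\bigl(\sigma((Z_k)_{k\le n})\bigr)_{n\in\omega}\in\mathrm{Dom}_\gamma(Q)$, which is exactly what ONE needs to win $\mathsf{G}_\gamma(P,Q)$. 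So this direction requires essentially no new idea beyond tracking the $\gamma$-condition through the same recursion.

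For the three remaining implications I would appeal to the symmetric structure and to Lemma \ref{equiv.gamma}. The implications ``ONE wins $\mathsf{G}_\gamma(P,Q)$ $\Rightarrow$ TWO wins $\mathsf{G}_1(\mathrm{Dom}(P),\mathrm{Dom}_\gamma(Q))$'' and the two dual statements with the roles of ONE and TWO exchanged all follow by the same transcription of strategies between the two game formats that underlies Theorem \ref{dual}; the point is that a play of one game is turned into a play of the other by the bijective correspondence between TWO's responses $b_n\in B$ and ONE's choices of witnessing $Z_n\in\mathrm{Dom}(P)$ via $\sigma$, and this correspondence is defined purely in terms of $P$. In each case the final ``who wins'' step is decided by the common sequence $(b_n)_{n\in\omega}$ and the single criterion ``$(b_n)_{n\in\omega}\in\mathrm{Dom}_\gamma(Q)$,'' interpreted as a win for ONE in $\mathsf{G}_\gamma(P,Q)$ and a win for TWO in $\mathsf{G}_1(\mathrm{Dom}(P),\mathrm{Dom}_\gamma(Q))$, so the logical equivalences line up automatically.

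I do not expect a genuine obstacle here, precisely because the $\gamma$-modification is confined to the success set and never interacts with the strategy-transfer machinery. The one point deserving care is notational bookkeeping: $\mathrm{Dom}_\gamma(Q)$ is a set of \emph{sequences} rather than of sets, so when transcribing a play I must preserve the ordering and multiplicity of TWO's moves rather than passing to the unordered range $\{b_n:n\in\omega\}$; as long as the correspondence $b_n\leftrightarrow\sigma((Z_k)_{k\le n})$ is kept as an equality of sequences indexed by $n$, the argument of Theorem \ref{dual} carries over unchanged. Accordingly, I would write the proof as ``The proof is identical to that of Theorem \ref{dual}, replacing every occurrence of $\mathrm{Dom}(Q)$ by $\mathrm{Dom}_\gamma(Q)$ and reading the success condition as a statement about the sequence $(b_n)_{n\in\omega}$ rather than its range; observation $(\dagger)$ and the strategy constructions go through verbatim.''
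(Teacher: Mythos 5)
Your proposal is correct and coincides with the paper's intended argument: the paper states Theorem \ref{dual-gamma} without proof, presenting it as a parallel of Theorem \ref{dual}, and your verbatim adaptation --- reusing observation $(\dagger)$ and the strategy transcriptions, which depend only on $P$, and reading the success condition as $(b_n)_{n\in\omega}\in\mathrm{Dom}_\gamma(Q)$ for the sequence rather than its range --- is exactly the omitted proof. Your closing caveat about preserving the indexing of TWO's moves is the one genuinely new point in the $\gamma$-setting, and you handle it correctly.
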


The following consequence of Theorem \ref{dual-gamma} was observed in
the proof of Proposition 1 of \cite{gerlits2}.
First, let us recall
the game $\mathsf{G}_{O,P}(X,x)$, introduced by Gruenhage in
\cite{gruen76}. Let a topological space $X$ and $x\in X$ be fixed. In
each inning $n\in\omega$, ONE picks $V_n\in\tau_x$, and then TWO
chooses $x_n\in V_n$. ONE wins if the sequence $(x_n)_{n\in\omega}$
converges to $x$, and loses otherwise.

\begin{corol}[Gerlits \cite{gerlits2}]
\label{dual.o-p_x}

Let $X$ be a topological space, $x\in X$
and $\Gamma_x=\{(x_n)_{n\in\omega}\in\mbox{}^\omega
X:x_n\stackrel{n\rightarrow\infty}\longrightarrow x\}$.
Then the game $\mathsf{G}_{O,P}(X,x)$ and the game
$\mathsf{G}_1(\Omega_x,\Gamma_x)$ on $X$ are dual.

\end{corol}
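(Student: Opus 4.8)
The plan is to derive this corollary directly from the general duality theorem for $\gamma$-dominating sequences, Theorem \ref{dual-gamma}, by applying it to the single relation $P = Q = (\tau_x, X, \ni)$. Recall from the table in Example \ref{ex.Dom} that for this relation one has $\mathrm{Dom}(P) = \Omega_x$, the collection of subsets of $X$ having $x$ as a cluster point; this supplies the first coordinate $\Omega_x$ appearing in the statement.

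The only genuine identification to make is that $\mathrm{Dom}_\gamma(P) = \Gamma_x$. To see this, I would unfold Definition \ref{def.gamma} by way of Lemma \ref{equiv.gamma}: a sequence $(x_n)_{n\in\omega}\in\mbox{}^\omega X$ is $\gamma$-dominating in $P$ exactly when, for every $V\in\tau_x$, the set $\{n\in\omega:\neg(V\ni x_n)\} = \{n\in\omega: x_n\notin V\}$ is finite. Since $\tau_x$ is precisely the family of open neighborhoods of $x$, this says that every neighborhood of $x$ contains all but finitely many terms of the sequence, which is the definition of convergence of $(x_n)_{n\in\omega}$ to $x$. Hence $\mathrm{Dom}_\gamma(P) = \Gamma_x$, giving the second coordinate in $\mathsf{G}_1(\Omega_x, \Gamma_x)$.

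Next I would check that $\mathsf{G}_\gamma(P,P)$ is literally Gruenhage's game $\mathsf{G}_{O,P}(X,x)$ recalled just before the statement. In inning $n$ of $\mathsf{G}_\gamma(P,P)$ player ONE chooses $V_n\in\tau_x$ and player TWO responds with $x_n\in X$ subject to $V_n\ni x_n$, i.e.\ $x_n\in V_n$; ONE wins a play if $(x_n)_{n\in\omega}\in\mathrm{Dom}_\gamma(P)$, that is --- by the identification above --- if $(x_n)_{n\in\omega}$ converges to $x$. These are exactly the moves and the winning condition of $\mathsf{G}_{O,P}(X,x)$.

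With these identifications in place, Theorem \ref{dual-gamma} applied to $P = Q = (\tau_x, X, \ni)$ asserts that $\mathsf{G}_\gamma(P,P)$ and $\mathsf{G}_1(\mathrm{Dom}(P), \mathrm{Dom}_\gamma(P))$ are dual, which by the above reads precisely as the duality of $\mathsf{G}_{O,P}(X,x)$ and $\mathsf{G}_1(\Omega_x, \Gamma_x)$. There is no real obstacle: the entire content is the observation that the $\gamma$-dominating sequences of the neighborhood relation are exactly the sequences converging to $x$, after which the result is a direct substitution into Theorem \ref{dual-gamma}.
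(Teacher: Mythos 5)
Your proposal is correct and is exactly the paper's proof: the paper's argument for Corollary \ref{dual.o-p_x} consists precisely of applying Theorem \ref{dual-gamma} with $P=Q=(\tau_x,X,\ni)$, and your identifications $\mathrm{Dom}(P)=\Omega_x$, $\mathrm{Dom}_\gamma(P)=\Gamma_x$ and $\mathsf{G}_\gamma(P,P)=\mathsf{G}_{O,P}(X,x)$ are just the routine verifications the paper leaves implicit. You have merely written out those checks in full, which is fine and accurate.
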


\begin{proof}

Apply Theorem \ref{dual-gamma} with $P=Q=(\tau_x,X,\ni)$.
\end{proof}

Our main goal in this section is to find conditions under which
the existence of a winning strategy for ONE in the game $\mathsf{G}(P,Q)$
yields the existence of a winning strategy for ONE also in the game
$\mathsf{G}_\gamma(P,Q)$. In order to formulate such conditions, we will
need the following auxiliary notion.

\begin{defin}[cf. Definition \ref{def.order}]
\label{down.P-compat}

Let $P=(A,B,R)$ be a relation. We say that a partial order
$\trianglelefteq$ on $A$ is
%
%
%
\emph{downwards $P$-compatible} if, for
all $a_1,a_2\in A$ and $b\in B$,
$$
(a_1\trianglelefteq a_2\;\&\;a_2Rb)\rightarrow a_1Rb.
$$


\end{defin}

The argument for the next result is essentially taken from
Theorem 3.9 of \cite{gruen76} (which we state as
Corollary \ref{equiv.W}); see also Theorem 1 of \cite{g-n} (also
stated here as Corollary \ref{equiv-g-n}).

\begin{thm}[Gruenhage \cite{gruen76}]
\label{gn}

Let $P=(A,B,R)$ and $Q=(C,D,T)$ be relations. Suppose that there is a
downwards $P$-compatible partial order $\trianglelefteq$ on $A$ such
that, for each finite subset $F$ of $A$, there is $\tilde{a}(F)\in A$
satisfying $a\trianglelefteq \tilde{a}(F)$ for all $a\in F$. Then the
following conditions are equivalent:

\begin{itemize}

\item[$(a)$]
ONE has a winning strategy in $\mathsf{G}(P,Q)$;

\item[$(b)$]
ONE has a winning strategy in $\mathsf{G}_\gamma(P,Q)$.

\end{itemize}

\end{thm}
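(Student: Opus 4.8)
The plan is to treat the two implications separately, with essentially all of the substance lying in $(a)\Rightarrow(b)$. For $(b)\Rightarrow(a)$ I would simply note that the winning condition for ONE in $\mathsf{G}_\gamma(P,Q)$ is strictly more demanding than in $\mathsf{G}(P,Q)$: if $(b_n)_{n\in\omega}\in\mathrm{Dom}_\gamma(Q)$ then, by Lemma \ref{equiv.gamma} applied with $X=\omega$, the set $\{b_n:n\in\omega\}$ already lies in $\mathrm{Dom}(Q)$. Hence a winning strategy for ONE in $\mathsf{G}_\gamma(P,Q)$ is verbatim a winning strategy for ONE in $\mathsf{G}(P,Q)$, and no hypothesis on $\trianglelefteq$ is needed for this direction.

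For $(a)\Rightarrow(b)$, fix a winning strategy $\sigma\colon{}^{<\omega}B\to A$ for ONE in $\mathsf{G}(P,Q)$. The idea is to have ONE, while playing $\mathsf{G}_\gamma(P,Q)$, run infinitely many plays of $\mathsf{G}(P,Q)$ governed by $\sigma$ in parallel, arranged so that each single response of TWO can be fed to all of them at once. Recursively on $n$ I would define ONE's $n$-th move together with the finite set $T_n$ of all \emph{threads} available at stage $n$ --- that is, all strictly increasing sequences $i_0<\dots<i_{j-1}<n$ of earlier innings whose associated responses $(b_{i_0},\dots,b_{i_{j-1}})$ form a legal $\sigma$-guided partial play of $\mathsf{G}(P,Q)$, in the sense that $\sigma$ of each proper initial segment was scheduled at the corresponding inning. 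Setting $F_n=\{\sigma((b_{i_0},\dots,b_{i_{j-1}})):(i_0,\dots,i_{j-1})\in T_n\}$, a finite nonempty subset of $A$ (finite because $T_n$ injects into the increasing sequences below $n$; nonempty because the empty thread always belongs to $T_n$), ONE's move at inning $n$ is declared to be $\tilde{a}(F_n)$. The two hypotheses on $\trianglelefteq$ are exactly what legitimize this: directedness guarantees that $\tilde{a}(F_n)$ exists and satisfies $\sigma(t)\trianglelefteq\tilde{a}(F_n)$ for every $t\in T_n$, and downwards $P$-compatibility then ensures that TWO's single reply $b_n$, which satisfies $\tilde{a}(F_n)\,R\,b_n$, also satisfies $\sigma(t)\,R\,b_n$ for each such $t$. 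Thus the one response $b_n$ legally extends \emph{every} thread whose $\sigma$-dictated continuation was scheduled at stage $n$.

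The heart of the verification --- and the step I expect to be the main obstacle --- is to show that if ONE follows this strategy then $(b_n)_{n\in\omega}\in\mathrm{Dom}_\gamma(Q)$. I would argue by contradiction: supposing some $c\in C$ has $S=\{n\in\omega:\neg(c\,T\,b_n)\}$ infinite, I would build, by recursion, an increasing sequence $n_0<n_1<\cdots$ drawn entirely from $S$ that assembles into a single $\sigma$-guided play of $\mathsf{G}(P,Q)$. The decisive point is that $F_m$ contains the $\sigma$-continuation of \emph{every} thread formed from innings below $m$; so, having built $(b_{n_0},\dots,b_{n_{j-1}})$ as a thread, its $\sigma$-continuation lies in $F_m$ for all $m>n_{j-1}$, and since $S$ is infinite I may choose $n_j\in S$ with $n_j>n_{j-1}$, at which stage downwards $P$-compatibility certifies that $b_{n_j}$ extends the thread. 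The resulting infinite play follows $\sigma$, yet since every $n_j\in S$ none of TWO's responses dominates $c$ in $T$; hence $\{b_{n_j}:j\in\omega\}\notin\mathrm{Dom}(Q)$, so ONE loses, contradicting that $\sigma$ is winning. The only genuine delicacies to pin down are that the recursion defining $T_n$, $F_n$ and ONE's moves is well defined and that $T_n$ is finite at every stage, so that $\tilde{a}(F_n)$ is always available.
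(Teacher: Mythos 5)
Your proposal is correct and is essentially the paper's own proof: your thread sets $T_n$ coincide with the paper's sets $S_n$ of all strictly increasing sequences with range in $n$ (the ``legality'' condition you impose is automatically satisfied, by induction, exactly via the downwards $P$-compatibility argument you give), so your strategy is precisely the paper's $\varphi((b_j)_{j<n})=\tilde{a}(\{\sigma((b_{s(i)})_{i\in\mathrm{dom}(s)}):s\in S_n\})$. The only cosmetic difference is that you verify the winning condition by contradiction through clause $(a)$ of Lemma \ref{equiv.gamma}, whereas the paper argues directly through clause $(b)$ by showing that the subsequence along an arbitrary infinite $X\subseteq\omega$ is a $\sigma$-guided play; the two verifications are trivially equivalent.
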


\begin{proof}

The implication $(b)\rightarrow(a)$ is immediate. We will prove
that $(a)$ implies $(b)$.

Let $\sigma:\mbox{}^{<\omega}B\rightarrow A$ be a winning strategy for
ONE in $\mathsf{G}(P,Q)$.
For each $n\in\omega$, let $S_n$ be the (finite) set of all strictly
increasing sequences with range included in $n$. Now define
$\varphi:\mbox{}^{<\omega}B\rightarrow A$ by
$\varphi((b_j)_{j<n})=\tilde{a}({\{\sigma((b_{s(i)})_{i\in\mathrm{dom}(s)}):s\in S_n\}})$.
We claim that $\varphi$ is a winning strategy for ONE in
$\mathsf{G}_\gamma(P,Q)$.

Indeed, let $(a_0,b_0,a_1,b_1,\dots,a_n,b_n,\dots)$ be a play of
$\mathsf{G}_\gamma(P,Q)$ in which ONE follows the strategy
$\varphi$, and let $X\in[\omega]^{\aleph_0}$ be arbitrary. Write
$X=\{x_k:k\in\omega\}$ with $x_k<x_{k+1}$ for all $k\in\omega$.
As $\trianglelefteq$ is downwards $P$-compatible, it follows that
$$
(
\sigma(\emptyset),b_{x_0},\sigma((b_{x_0})),b_{x_1},\sigma((b_{x_0},b_{x_1})),b_{x_2},\dots,
\sigma((b_{x_i})_{i<k}),b_{x_k},\dots
)
$$
is a play of $\mathsf{G}(P,Q)$ in which ONE makes use of the
winning strategy $\sigma$, since for each $k\in\omega$ we have
$$
\sigma((b_{x_i})_{i<k})\trianglelefteq
\tilde{a}({\{\sigma((b_{s(i)})_{i\in\mathrm{dom}(s)}):s\in S_{x_k}\}})
=\varphi((b_j)_{j<x_k})=a_{x_k}
$$
and
$
a_{x_k}Rb_{x_k}.
$
Thus $\{b_{x_k}:k\in\omega\}\in\mathrm{Dom}(Q)$; since $X$ was chosen
arbitrarily, it follows that
$(b_n)_{n\in\omega}\in\mathrm{Dom}_\gamma(Q)$.
\end{proof}

\begin{corol}[Gruenhage \cite{gruen76}]
\label{equiv.W}

Let $X$ be a topological space and $x\in X$. The following statements
are equivalent:

\begin{itemize}

\item[$(a)$]
ONE has a winning strategy in the game $\mathsf{G}^c_{O,P}(X,x)$ (see
Example \ref{game.gruen});

\item[$(b)$]
ONE has a winning strategy in the game $\mathsf{G}_{O,P}(X,x)$ (see
paragraph preceding Corollary \ref{dual.o-p_x}).

\end{itemize}

\end{corol}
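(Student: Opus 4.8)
The plan is to deduce the corollary directly from Theorem \ref{gn} by instantiating it at the relation appearing in Example \ref{game.gruen}. Specifically, I would set $P=Q=(\tau_x,X,\ni)$, so that $\mathrm{Dom}(P)=\Omega_x$ and the game $\mathsf{G}(P,Q)$ is precisely Gruenhage's game $\mathsf{G}^c_{O,P}(X,x)$. Thus statement $(a)$ is exactly the assertion that ONE has a winning strategy in $\mathsf{G}(P,Q)$.

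The first thing to check is that $\mathsf{G}_\gamma(P,Q)$ coincides with $\mathsf{G}_{O,P}(X,x)$. Both games have the same moves --- ONE plays $V_n\in\tau_x$ and TWO responds with a point $x_n\in V_n$ (that is, $V_n\ni x_n$) --- so it only remains to match the winning conditions. By Definition \ref{def.gamma} together with Lemma \ref{equiv.gamma}, a sequence $(x_n)_{n\in\omega}\in\mbox{}^\omega X$ lies in $\mathrm{Dom}_\gamma(P)$ if and only if $\{n\in\omega:x_n\notin V\}$ is finite for every $V\in\tau_x$, which is exactly the assertion that $(x_n)_{n\in\omega}$ converges to $x$. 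Hence $\mathrm{Dom}_\gamma(P)=\Gamma_x$ and $\mathsf{G}_\gamma(P,Q)$ is $\mathsf{G}_{O,P}(X,x)$, so statement $(b)$ is the assertion that ONE has a winning strategy in $\mathsf{G}_\gamma(P,Q)$.

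It then remains to verify the hypotheses of Theorem \ref{gn}. I would take $\trianglelefteq$ to be reverse inclusion on $A=\tau_x$, i.e. $a_1\trianglelefteq a_2\leftrightarrow a_1\supseteq a_2$. This is a partial order, and it is downwards $P$-compatible in the sense of Definition \ref{down.P-compat}: if $a_1\supseteq a_2$ and $a_2\ni b$, then $b\in a_2\subseteq a_1$, so $a_1\ni b$. Finally, for any finite subset $F$ of $\tau_x$ the set $\bigcap F$ is an open neighborhood of $x$, hence an element of $\tau_x$, and it satisfies $a\supseteq\bigcap F$ for every $a\in F$; thus $\tilde a(F)=\bigcap F$ witnesses the remaining hypothesis (with the convention $\tilde a(\emptyset)=X$). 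With these in place, Theorem \ref{gn} yields the equivalence of $(a)$ and $(b)$.

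The argument is essentially bookkeeping, so I do not anticipate a genuine obstacle; the only point requiring care is fixing the direction of $\trianglelefteq$ correctly, since downwards $P$-compatibility forces the order to \emph{reverse} the inclusion of neighborhoods (larger neighborhoods dominate more points), and the finite-upper-bound requirement then becomes the stability of $\tau_x$ under finite intersections.
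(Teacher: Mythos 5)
Your proposal is correct and is exactly the paper's proof: the paper also applies Theorem \ref{gn} with $P=Q=(\tau_x,X,\ni)$ and $V\trianglelefteq W\leftrightarrow V\supseteq W$, leaving the routine verifications (which you spell out correctly, including the identification $\mathrm{Dom}_\gamma(P)=\Gamma_x$ and $\tilde a(F)=\bigcap F$) to the reader.
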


\begin{proof}

Apply Theorem \ref{gn} with $P=Q=(\tau_x,X,\ni)$ and $V\trianglelefteq
W\leftrightarrow V\supseteq W$.
\end{proof}

Before stating the next result, we evoke Corollaries 4.3 and 4.4 of
\cite{telg1}, which can be combined in a single statement as:

\begin{prop}[Telg\'arsky \cite{telg1}]
\label{finite-open}

The point-open game is equivalent to the finite-open game, which is
played according to the following rules: In each inning $n\in\omega$,
ONE picks a finite subset $F_n$ of $X$, and then TWO chooses
$U_n\in\tau$ with $F_n\subseteq U_n$; ONE wins if
$\{U_n:n\in\omega\}\in\mathcal{O}_X$, and TWO wins otherwise.

\end{prop}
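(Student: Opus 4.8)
The plan is to establish the equivalence in the sense of Definition \ref{equivalent.games} by verifying the four implications between winning strategies, two of which are immediate while two carry the real content. It is convenient to write the point-open game as $\mathsf{G}(P,P)$ and the finite-open game as $\mathsf{G}(P',P)$, where $P=(X,\tau,\in)$ and $P'=([X]^{<\aleph_0},\tau,\subseteq)$; both have the same winning condition for ONE, namely $\{U_n:n\in\omega\}\in\mathcal{O}_X$. Since a point $x$ may be read as the singleton $\{x\}\in[X]^{<\aleph_0}$, and the legal answers to $\{x\}$ in the finite-open game are exactly the legal answers to $x$ in the point-open game, a winning strategy for ONE in the point-open game is at once a winning strategy for ONE in the finite-open game (ONE plays only singletons), and symmetrically a winning strategy for TWO in the finite-open game restricts to one for TWO in the point-open game (TWO treats ONE's point as a singleton). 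I would dispose of these two directions in a line each.

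The substance lies in the two converse implications, and here the naive simulation fails: if ONE, trying to realise a finite set $F$, plays its points one at a time, TWO answers each $x\in F$ with an open set containing only $x$, and no single one of these sets need contain all of $F$, so there is nothing to hand to a finite-open strategy. The device that repairs this is the observation that coverage of a prescribed finite set can be \emph{forced} in finitely many point-open innings: repeatedly play a point of $F$ not yet covered by TWO's previous answers, so that each answer covers at least the point just played; after at most $|F|$ innings the finitely many open sets produced by TWO have union $U\supseteq F$. This $U$ is a legitimate finite-open answer, assembled entirely from genuine point-open moves. I expect this bridging between a single finite set and a spread of single points to be the only real obstacle, and the finite-union trick resolves it uniformly in both directions.

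For the implication that a winning strategy $\varphi$ for ONE in the finite-open game yields one for ONE in the point-open game, I would have ONE run a simulated finite-open play on the side. Having reached a simulated history $(F_0,U_0,\dots,U_{n-1})$, ONE forms $F_n=\varphi(U_0,\dots,U_{n-1})$ and spends the next (finitely many) point-open innings forcing coverage of $F_n$ as above, letting $U_n$ be the union of the open sets TWO plays during this phase, so that $U_n\supseteq F_n$. The resulting $(F_0,U_0,F_1,U_1,\dots)$ is a play in which ONE follows $\varphi$, hence is won by ONE, giving $\bigcup_{n}U_n=X$; since each $U_n$ is a finite union of sets actually played by TWO in the point-open game, the union of \emph{all} of TWO's point-open answers is $X$ as well, so ONE wins the point-open game.

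The remaining implication — a winning strategy $t$ for TWO in the point-open game yields one for TWO in the finite-open game — is the exact dual, and I would argue it symmetrically: when ONE plays a finite set $F$, TWO feeds the points of $F$ one by one to $t$ in a simulated point-open play and responds in the finite-open game with the union of the resulting open sets, which contains $F$. Concatenating these phases over all innings produces a single point-open play in which TWO follows $t$; since $t$ is winning, the union of all of $t$'s answers misses some point of $X$, and this union is exactly $\bigcup_n U_n$, so TWO wins the finite-open game. The bookkeeping interleaving the simulated and actual innings, together with the harmless empty-move edge cases (an empty $F_n$ costs no inning; if only finitely many phases are nonempty then finitely many answers already cover $X$ and ONE has won, and the case $X=\emptyset$ is vacuous), is routine.
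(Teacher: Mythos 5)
Your proof is correct, but there is no proof in the paper to measure it against: the paper states Proposition \ref{finite-open} purely as a citation, combining Corollaries 4.3 and 4.4 of \cite{telg1} into one statement, and supplies no argument. Your argument is essentially the standard (and essentially Telg\'arsky's original) one: the two trivial directions via the identification of a point $x$ with the singleton $\{x\}$, and, for the two substantive directions, the finite-union simulation in which coverage of a prescribed finite set $F$ is forced in at most $|F|$ point-open innings by repeatedly playing a not-yet-covered point of $F$, the union of TWO's answers in the phase serving as a single legal finite-open answer. Telg\'arsky proves this in the greater generality of games $\mathsf{G}(\mathbf{K},X)$ for classes $\mathbf{K}$ closed under finite unions, but the combinatorial core is the same. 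Note that the paper's own duality machinery would not let you bypass it: Theorem \ref{dual} translates the statement into the equivalence of $\mathsf{G}_1(\mathcal{O}_X,\mathcal{O}_X)$ and $\mathsf{G}_1(\Omega_X,\mathcal{O}_X)$, whose proof requires the same finite-set bookkeeping, so your direct route is the right one.

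One small repair to your final paragraph: the parenthetical clause ``if only finitely many phases are nonempty then finitely many answers already cover $X$ and ONE has won'' resolves the degenerate case of the ONE-direction, but it is placed in the TWO-direction, where the degeneracy is different. If ONE plays $F_n=\emptyset$ from some inning on, your simulated point-open play stalls at a finite partial play, and ``since $t$ is winning, the union of all of $t$'s answers misses some point'' does not literally apply to a partial play. The fix is equally routine but should be stated: extend the stalled partial play arbitrarily to a full play in which TWO follows $t$ (possible since $X\neq\emptyset$); as $t$ is winning, the union of TWO's answers along the full play misses a point of $X$, and the union of TWO's actual finite-open answers --- being exactly the union of the simulated answers of the partial play --- is contained in it, so TWO still wins. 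With that half-line added, all four implications are complete.
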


With Proposition \ref{finite-open} in mind, we recall the
\emph{strict point-open game}, introduced in \cite{g-n}. The game is
played according to the same rules as the finite-open game,\footnote{In
view of Proposition \ref{finite-open}, the authors make no distinction
between the point-open game and the finite-open game in \cite{g-n}.}
but now ONE wins if
$X=\bigcup_{n\in\omega}\bigcap_{m\in\omega\setminus n}U_m$.

\begin{corol}[Gerlits--Nagy \cite{g-n}]
\label{equiv-g-n}

Let $X$ be a topological space. The following statements
are equivalent:

\begin{itemize}

\item[$(a)$]
ONE has a winning strategy in the point-open game on $X$;

\item[$(b)$]
ONE has a winning strategy in the strict point-open game on $X$.

\end{itemize}

\end{corol}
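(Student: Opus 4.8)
The plan is to derive this corollary as an instance of Theorem~\ref{gn}, exactly as the earlier corollaries (\ref{equiv.W}) are derived. The key observation is that both the point-open game and the strict point-open game can be expressed as games of the form $\mathsf{G}(P,Q)$ and $\mathsf{G}_\gamma(P,Q)$ respectively, for a suitable pair of relations. First I would recall from Proposition~\ref{finite-open} that the point-open game is equivalent to the finite-open game, in which ONE plays finite subsets $F_n\subseteq X$ and TWO responds with open $U_n\supseteq F_n$. This makes it natural to take $P = ([X]^{<\aleph_0},\tau,\subseteq)$, so that a legal move for TWO dominating $F_n$ in $\subseteq$ is precisely an open set containing $F_n$, and $\mathrm{Dom}(P) = \mathcal{O}_X$ by the table in Example~\ref{ex.Dom} (identifying $\mathcal{O}_X$ with $\Omega_X$ up to the standard equivalence, or more directly noting that $\{U_n\}$ dominates all finite sets iff it is an open cover). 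With $Q = P = ([X]^{<\aleph_0},\tau,\subseteq)$, the game $\mathsf{G}(P,Q)$ is the finite-open game, hence equivalent to the point-open game.

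Next I would identify the strict point-open game with $\mathsf{G}_\gamma(P,Q)$ for this same $P=Q$. In $\mathsf{G}_\gamma(P,Q)$, ONE wins a play $F_0,U_0,F_1,U_1,\dots$ precisely when $(U_n)_{n\in\omega}\in\mathrm{Dom}_\gamma(Q)$, which by Definition~\ref{def.gamma} and Lemma~\ref{equiv.gamma}(a) means that for every finite $F\subseteq X$ the set $\{n:\neg(F\subseteq U_n)\}$ is finite, i.e.\ $F\subseteq U_n$ for all but finitely many $n$. Taking $F$ to range over singletons $\{x\}$, this is equivalent to saying that each $x\in X$ lies in $U_n$ for all sufficiently large $n$, which is exactly the condition $X=\bigcup_{n\in\omega}\bigcap_{m\in\omega\setminus n}U_m$ defining a win for ONE in the strict point-open game. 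So $\mathsf{G}_\gamma(P,Q)$ is the strict point-open game.

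Having matched both games, the corollary follows by applying Theorem~\ref{gn} with $P=Q=([X]^{<\aleph_0},\tau,\subseteq)$ and the partial order $F\trianglelefteq G\leftrightarrow F\subseteq G$ on $A=[X]^{<\aleph_0}$. I would verify the two hypotheses: this $\trianglelefteq$ is downwards $P$-compatible because if $F\subseteq G$ and $G\subseteq U$ (i.e.\ $G\,{\subseteq}\,U$, the relation $R$), then $F\subseteq U$; and for any finite collection $\{F_1,\dots,F_k\}$ of finite subsets of $X$ the union $\tilde{a}(\{F_1,\dots,F_k\}) = F_1\cup\cdots\cup F_k$ is again a finite subset with each $F_i\trianglelefteq\tilde{a}$. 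Theorem~\ref{gn} then gives the equivalence of ONE having a winning strategy in $\mathsf{G}(P,Q)$ and in $\mathsf{G}_\gamma(P,Q)$, which is the desired equivalence.

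I do not expect a serious obstacle here; the argument is a direct translation into the relational framework, mirroring the proof of Corollary~\ref{equiv.W}. The only point requiring a little care is the bookkeeping that matches $\mathrm{Dom}_\gamma(Q)$ to the literal winning condition $X=\bigcup_n\bigcap_{m\ge n}U_m$ of the strict point-open game, and the appeal to Proposition~\ref{finite-open} to pass between the point-open and finite-open formulations so that $P$ can be taken with first coordinate $[X]^{<\aleph_0}$ rather than $X$; using $[X]^{<\aleph_0}$ is what makes the order $\subseteq$ closed under finite joins, which is exactly what Theorem~\ref{gn} requires.
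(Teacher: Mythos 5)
Your overall plan --- reduce to Theorem~\ref{gn} via Proposition~\ref{finite-open}, with $P=([X]^{<\aleph_0},\tau,\subseteq)$ and $F\trianglelefteq G\leftrightarrow F\subseteq G$ --- is exactly the paper's, but your choice $Q=P$ introduces a genuine gap. With $Q=([X]^{<\aleph_0},\tau,\subseteq)$, the table in Example~\ref{ex.Dom} gives $\mathrm{Dom}(Q)=\Omega_X$, the family of $\omega$-covers, \emph{not} $\mathcal{O}_X$; your parenthetical justification that ``$\{U_n\}$ dominates all finite sets iff it is an open cover'' is false: on $X=\mathbb{R}$ the family $\{(n,n+2):n\in\mathbb{Z}\}$ is an open cover, but no member contains the finite set $\{0,3/2\}$. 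Consequently your $\mathsf{G}(P,Q)$ is the finite-open game with the strictly stronger winning condition that TWO's responses form an $\omega$-cover, and the step from ``ONE has a winning strategy in the finite-open game'' to ``ONE has a winning strategy in $\mathsf{G}(P,Q)$'' --- which is what your chain needs for the nontrivial direction $(a)\rightarrow(b)$ --- is not formal, since the $\omega$-cover condition is harder for ONE. (The implication is in fact true, but proving it essentially requires Telg\'arsky's product theorem, Corollary~\ref{ONE-p-o}, applied to all finite powers $X^k$ together with an interleaving of the resulting strategies --- a substantial detour, not a ``direct translation.'') There is likewise no ``standard equivalence'' identifying $\mathcal{O}_X$ with $\Omega_X$: already at the level of selection principles, $\mathcal{O}$-Lindel\"of and $\Omega$-Lindel\"of (the latter being Lindel\"of in all finite powers) differ.

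The repair is immediate, and is what the paper does: Theorem~\ref{gn} does not require $P=Q$, and its hypotheses concern only the partial order on the first coordinate of $P$. Keep $P=([X]^{<\aleph_0},\tau,\subseteq)$ and $\trianglelefteq=\subseteq$, but take $Q=(X,\tau,\in)$. Then $\mathrm{Dom}(Q)=\mathcal{O}_X$, so $\mathsf{G}(P,Q)$ is literally the finite-open game, equivalent to the point-open game by Proposition~\ref{finite-open}; and $\mathrm{Dom}_\gamma(Q)$ consists of the sequences $(U_n)_{n\in\omega}$ in which every point of $X$ lies in all but finitely many $U_n$, i.e.\ $X=\bigcup_{n\in\omega}\bigcap_{m\in\omega\setminus n}U_m$, so $\mathsf{G}_\gamma(P,Q)$ is literally the strict point-open game. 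Your verification of the hypotheses on $\trianglelefteq$ (downwards $P$-compatibility, and finite joins given by unions of finite sets) is correct and carries over unchanged. Note also that your observation matching $\mathrm{Dom}_\gamma$ for finite sets with the pointwise condition $X=\bigcup_n\bigcap_{m\ge n}U_m$ is correct --- a finite intersection of cofinite sets is cofinite --- which is why your choice of $Q$ causes no harm on the $\gamma$-side; the damage is confined entirely to the identification of $\mathsf{G}(P,Q)$ with the point-open game.
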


\begin{proof}

Apply Theorem \ref{gn} with $P=([X]^{<\aleph_0},\tau,\subseteq)$,
$Q=(X,\tau,\in)$ and $F\trianglelefteq G\leftrightarrow F\subseteq G$.
\end{proof}

For the next two corollaries, we recall the game $\mathsf{G}^*(X)$
introduced by Gruenhage in \cite{eberlein} for every noncompact space
$X$. In each inning $n\in\omega$ of $\mathsf{G}^*(X)$, ONE picks a
compact set $C_n\subseteq X$, and then TWO picks a nonempty compact
set $L_n\subseteq X$ with $C_n\cap L_n=\emptyset$. ONE wins if the
family $\{L_n:n\in\omega\}$ is locally finite, and loses otherwise.
We shall also consider a variation $\mathsf{G}^{**}(X)$ of this game,
which is played according to the same rules but now ONE wins if and
only if $\{L_n:n\in\omega\}\in\mathfrak{M}_X$.

\begin{corol}
\label{moving-off-game}

Let $X$ be a noncompact locally compact space.
The following assertions are equivalent:

\begin{itemize}

\item[$(a)$]
ONE has a winning strategy in $\mathsf{G}^{*}(X)$;

\item[$(b)$]
ONE has a winning strategy in $\mathsf{G}^{**}(X)$.

\end{itemize}

\end{corol}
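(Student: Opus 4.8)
The plan is to recognize Corollary~\ref{moving-off-game} as an instance of Theorem~\ref{gn}, exactly as the preceding corollaries were obtained. First I would identify the relation $P$ whose dominating families and game $\mathsf{G}(P,Q)$ reproduce the game $\mathsf{G}^*(X)$. Looking at Example~\ref{ex.Dom}, the natural candidate is the moving-off relation $P=(K(X),K(X)\setminus\{\emptyset\},R)$ with $HRK \leftrightarrow H\cap K=\emptyset$, for which $\mathrm{Dom}(P)=\mathfrak{M}_X$ is the collection of moving-off families. With this relation, a play of $\mathsf{G}(P,Q)$ consists of ONE picking compact sets $C_n$ and TWO responding with nonempty compact $L_n$ disjoint from $C_n$ --- precisely the moves of Gruenhage's game. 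The point of the corollary is then that $\mathsf{G}^*(X)$, whose winning condition for ONE is that $\{L_n:n\in\omega\}$ be locally finite, should coincide (as far as existence of winning strategies is concerned) with $\mathsf{G}_\gamma(P,Q)$, while $\mathsf{G}^{**}(X)$, whose winning condition is $\{L_n:n\in\omega\}\in\mathfrak{M}_X$, should coincide with $\mathsf{G}(P,Q)$ for the appropriate $Q$.

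The key conceptual step is to match the two winning conditions with the $\gamma$-dominating versus ordinary dominating dichotomy. Taking $Q=P$, so that $\mathrm{Dom}(Q)=\mathfrak{M}_X$, the condition $(L_n)_{n\in\omega}\in\mathrm{Dom}_\gamma(P)$ unwinds via Lemma~\ref{equiv.gamma} to: for every compact $H\in K(X)$, all but finitely many $L_n$ satisfy $H\cap L_n=\emptyset$. On a locally compact space this is exactly the statement that $\{L_n:n\in\omega\}$ is locally finite, since each point has a compact neighborhood meeting only finitely many $L_n$, and conversely a locally finite family of compacta meets any given compact set only finitely often. Thus the winning condition of $\mathsf{G}^*(X)$ is $\mathrm{Dom}_\gamma(P)$ and that of $\mathsf{G}^{**}(X)$ is $\mathrm{Dom}(P)$, so the two games are $\mathsf{G}_\gamma(P,P)$ and $\mathsf{G}(P,P)$ respectively. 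I would verify both directions of this equivalence of conditions carefully, as it is where local compactness is actually used.

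It then remains to supply the partial order $\trianglelefteq$ on $A=K(X)$ required by Theorem~\ref{gn} and check its two hypotheses. The obvious choice is $H_1\trianglelefteq H_2 \leftrightarrow H_1\subseteq H_2$. Downwards $P$-compatibility asks that $H_1\subseteq H_2$ and $H_2\cap K=\emptyset$ imply $H_1\cap K=\emptyset$, which is immediate. The directedness hypothesis asks that every finite subset $F\subseteq K(X)$ admit an upper bound $\tilde a(F)\in K(X)$; here $\tilde a(F)=\bigcup F$ works, since a finite union of compact sets is compact and contains each member of $F$. With these verifications in place, Theorem~\ref{gn} applied to $P=Q=(K(X),K(X)\setminus\{\emptyset\},R)$ with $\trianglelefteq$ equal to $\subseteq$ yields the equivalence of ONE having a winning strategy in $\mathsf{G}(P,P)=\mathsf{G}^{**}(X)$ and in $\mathsf{G}_\gamma(P,P)=\mathsf{G}^*(X)$, which is exactly the claim.

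The main obstacle I anticipate is the translation between ``locally finite'' and the $\gamma$-dominating condition, which is the only place where the hypotheses ``noncompact locally compact'' enter and which must be argued on both sides; everything else is the now-routine checking that $\subseteq$ is downwards $P$-compatible and that finite unions provide the needed upper bounds. One should also confirm that $Q=P$ (equivalently $D=K(X)\setminus\{\emptyset\}$ and $\mathrm{Dom}(Q)=\mathfrak{M}_X$) is the correct choice making $\mathsf{G}(P,Q)$ literally Gruenhage's $\mathsf{G}^{**}(X)$, so that Theorem~\ref{gn} applies verbatim with no mismatch between the first and second coordinate relations.
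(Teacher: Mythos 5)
Your reduction of $\mathsf{G}^{**}(X)$ to $\mathsf{G}(P,P)$ with $P=(K(X),K(X)\setminus\{\emptyset\},R)$, $HRK\leftrightarrow H\cap K=\emptyset$, and your verification of the hypotheses of Theorem \ref{gn} (the order $\subseteq$ on $K(X)$ is downwards $P$-compatible, and $\tilde a(F)=\bigcup F$ provides the finite upper bounds) are correct and agree with the paper. The gap is in your other identification: it is \emph{not} true that the winning condition of $\mathsf{G}^*(X)$ coincides with $(L_n)_{n\in\omega}\in\mathrm{Dom}_\gamma(P)$. Local compactness does give the implication ``$\gamma$-dominating $\Rightarrow$ locally finite'', which is the only implication the paper uses; but your ``conversely'' conflates \emph{members} of the family $\{L_n:n\in\omega\}$ with \emph{indices} $n$. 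Local finiteness is a property of the \emph{set} of TWO's moves, whereas $\gamma$-domination (Lemma \ref{equiv.gamma}) requires $\{n\in\omega:H\cap L_n\neq\emptyset\}$ to be finite for every compact $H$. If TWO answers with one and the same nonempty compact set $L$ in every inning (legal whenever $L$ misses ONE's moves), then $\{L_n:n\in\omega\}=\{L\}$ is locally finite, so ONE wins this play of $\mathsf{G}^*(X)$, yet taking $H=L$ shows $(L_n)_{n\in\omega}\notin\mathrm{Dom}_\gamma(P)$. Hence ONE's winning condition in $\mathsf{G}_\gamma(P,P)$ is strictly stronger than in $\mathsf{G}^*(X)$, and your single appeal to Theorem \ref{gn} yields only: ONE wins $\mathsf{G}^{**}(X)$ $\Leftrightarrow$ ONE wins $\mathsf{G}_\gamma(P,P)$ $\Rightarrow$ ONE wins $\mathsf{G}^*(X)$. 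The implication $(a)\Rightarrow(b)$ is left unproven in your write-up.

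The paper closes precisely this direction by a separate, direct argument (in the printed proof the labels of the two implications appear interchanged, but the content is as follows): from a winning strategy $\sigma$ for ONE in $\mathsf{G}^*(X)$, define $\varphi((L_i)_{i<n})=\sigma((L_i)_{i<n})\cup\bigcup_{i<n}L_i$. Every play against $\varphi$ is also a legal play against $\sigma$, so TWO's moves form a locally finite family; moreover they are pairwise disjoint nonempty compacta, hence pairwise distinct, so the family is infinite, and an infinite locally finite family of nonempty compact sets is moving-off (Lemma 4 of \cite{paracompactness}). Thus $\varphi$ wins $\mathsf{G}^{**}(X)$. The same device would repair your route: against a strategy that adds $\bigcup_{i<n}L_i$ to each move, TWO's responses are injective, and for injective sequences local finiteness does imply the $\gamma$-condition (each compact $H$ meets only finitely many members, hence only finitely many indices). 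But this extra argument must be supplied --- Theorem \ref{gn} does not apply ``verbatim'' as you assert. (Incidentally, noncompactness of $X$ is used not in this translation but to make $P$ a relation at all: it guarantees that every $H\in K(X)$ admits a nonempty compact set disjoint from it.)
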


\begin{proof}

For $(b)\rightarrow(a)$, let $\sigma$ be a winning strategy for ONE in
$\mathsf{G}^{**}(X)$. We can define a strategy $\varphi$ for
ONE in $\mathsf{G}^*(X)$ by setting
$\varphi((L_i)_{i<n})=\sigma((L_i)_{i<n})\cup\bigcup_{i<n}L_i$ for all
$(L_i)_{i<n}\in\mbox{}^{<\omega}(K(X)\setminus\{\emptyset\})$. Note
that $\varphi$ is a winning strategy since the set of TWO's moves in a
play in which ONE follows $\varphi$ is an infinite locally finite
family of nonempty compact sets, and hence is a moving-off family by
Lemma 4 of \cite{paracompactness}.

For $(a)\rightarrow(b)$, apply Theorem \ref{gn} with
$P=Q=(K(X),K(X)\setminus\{\emptyset\},R)$, where $CRL\leftrightarrow
C\cap L=\emptyset$. The result follows from the observation that, if
$X$ is locally compact and
$(L_n)_{n\in\omega}\in\mathrm{Dom}_\gamma(Q)$, then
$\{L_n:n\in\omega\}$ is locally finite.
\end{proof}

The next corollary presents some variations on the game-theoretic
characterization of paracompactness for locally compact $T_2$ spaces
obtained by Gruenhage in \cite{eberlein} --- which states that a
locally compact $T_2$ space is paracompact if and only if ONE has a
winning strategy in the game $\mathsf{G}^*(X)$.

\begin{corol}
\label{gruen-mop}

Let $X$ be a noncompact locally compact $T_2$ space and
$\mathfrak{L}=\{\mathcal{L}\subseteq
K(X)\setminus\{\emptyset\}:\mathcal{L}$ is locally finite$\}$. The
following conditions are equivalent:

\begin{itemize}

\item[$(a)$]
$X$ is paracompact;

\item[$(b)$]
ONE has a winning strategy in $\mathsf{G}^*(X)$;

\item[$(c)$]
ONE has a winning strategy in $\mathsf{G}^{**}(X)$;

\item[$(d)$]
TWO has a winning strategy in
$\mathsf{G}_1(\mathfrak{M},\mathfrak{L})$;

\item[$(e)$]
TWO has a winning strategy in
$\mathsf{G}_1(\mathfrak{M},\mathfrak{M})$.

\end{itemize}

\end{corol}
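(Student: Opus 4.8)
The plan is to funnel all five conditions through the single relation $P=(K(X),K(X)\setminus\{\emptyset\},R)$ with $CRL\leftrightarrow C\cap L=\emptyset$, for which $\mathrm{Dom}(P)=\mathfrak{M}_X$ by Example \ref{ex.Dom}. First I would check that $P$ really is a relation in the sense of Definition \ref{def.relation}: this is exactly where noncompactness is used, since for each compact $C$ one has $X\setminus C\neq\emptyset$, and picking $p\in X\setminus C$ gives the nonempty compact set $\{p\}$ disjoint from $C$. Unwinding the definitions, $\mathsf{G}^{**}(X)$ is \emph{literally} the game $\mathsf{G}(P,P)$: ONE plays $C_n\in K(X)$, TWO plays $L_n\in K(X)\setminus\{\emptyset\}$ with $C_nRL_n$, and ONE wins iff $\{L_n:n\in\omega\}\in\mathfrak{M}_X=\mathrm{Dom}(P)$. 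Likewise $\mathsf{G}^{*}(X)$ is the game obtained from $\mathsf{G}(P,P)$ by declaring ONE the winner exactly when $\{L_n:n\in\omega\}\in\mathfrak{L}$ rather than when it lies in $\mathrm{Dom}(P)$.

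With these identifications in place I would assemble the equivalences from three inputs. The equivalence $(a)\Leftrightarrow(b)$ is precisely Gruenhage's characterization of paracompactness for locally compact $T_2$ spaces quoted just before the statement, and $(b)\Leftrightarrow(c)$ is Corollary \ref{moving-off-game} (which is where local compactness enters once more). For $(c)\Leftrightarrow(e)$ I would simply invoke Theorem \ref{dual} with $P=Q$ as above: it asserts that $\mathsf{G}(P,P)=\mathsf{G}^{**}(X)$ and $\mathsf{G}_1(\mathrm{Dom}(P),\mathrm{Dom}(P))=\mathsf{G}_1(\mathfrak{M},\mathfrak{M})$ are dual, so ONE has a winning strategy in the former iff TWO has one in the latter.

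The only step that is not a direct citation is $(b)\Leftrightarrow(d)$, and this is where I expect the main obstacle to lie. I want to say that $\mathsf{G}^{*}(X)$ and $\mathsf{G}_1(\mathfrak{M},\mathfrak{L})$ are dual, but $\mathfrak{L}$ is downward closed rather than upward closed and hence is \emph{not} of the form $\mathrm{Dom}(Q)$ for any relation $Q$; so Theorem \ref{dual} cannot be applied verbatim. The key observation is that the proof of Theorem \ref{dual} never uses that the target family is a dominating family: the auxiliary property $(\dagger)$ there depends only on $P$ (equivalently, on the family $\mathrm{Dom}(P)=\mathfrak{M}$ that ONE plays from), and the winning family enters only at the final line, where one concludes that the constructed play is a win because the set of TWO's moves belongs to it. Consequently the identical argument shows, for any fixed family $\mathcal{B}$ of subsets of $K(X)\setminus\{\emptyset\}$, that the variant of $\mathsf{G}(P,P)$ in which ONE wins when $\{L_n:n\in\omega\}\in\mathcal{B}$ is dual to $\mathsf{G}_1(\mathfrak{M},\mathcal{B})$; taking $\mathcal{B}=\mathfrak{L}$ yields $(b)\Leftrightarrow(d)$. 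I would only need the half of duality asserting ``ONE wins $\mathsf{G}^{*}(X)$ iff TWO wins $\mathsf{G}_1(\mathfrak{M},\mathfrak{L})$'', and both of its constituent implications generalize in exactly this way.

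The hard part is therefore conceptual rather than computational: recognizing that Galvin's duality argument is insensitive to the shape of the winning set, just as the (unproved) Theorem \ref{dual-gamma} already exploits for the non-dominating family $\mathrm{Dom}_\gamma$. Once this generalization of Theorem \ref{dual} is in hand, chaining the pieces gives $(a)\Leftrightarrow(b)\Leftrightarrow(c)\Leftrightarrow(e)$ together with $(b)\Leftrightarrow(d)$, which establishes the equivalence of all five conditions.
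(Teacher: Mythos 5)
Your proposal is correct, and four of your five links are exactly the paper's: like the paper, you get $(a)\leftrightarrow(b)$ from Gruenhage's characterization (Theorem 5 of \cite{eberlein}), $(b)\leftrightarrow(c)$ from Corollary \ref{moving-off-game}, and $(c)\leftrightarrow(e)$ from Theorem \ref{dual} applied with $P=Q=(K(X),K(X)\setminus\{\emptyset\},R)$, where $CRL\leftrightarrow C\cap L=\emptyset$. The genuine divergence is $(b)\leftrightarrow(d)$: the paper disposes of it with a citation (Theorem 2 of \cite{paracompactness}), whereas you reprove it by observing that Galvin's duality argument never uses that the payoff family is a dominating family --- the key property $(\dagger)$ and the back-and-forth simulations depend only on the relation $P$ and on ONE's moves in $\mathsf{G}_1$ lying in $\mathrm{Dom}(P)$, the winning family entering only through a final membership test --- so the variant of $\mathsf{G}(P,P)$ with payoff $\mathfrak{L}$, which is literally $\mathsf{G}^*(X)$, is dual to $\mathsf{G}_1(\mathfrak{M},\mathfrak{L})$ even though $\mathfrak{L}$, being downward rather than upward closed, is not $\mathrm{Dom}(Q)$ for any relation $Q$. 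This generalization is sound: both implications you need (indeed all four constituting duality) go through verbatim with an arbitrary family $\mathcal{B}$ of subsets of $B$ in place of $\mathrm{Dom}(Q)$, exactly the flexibility that the unproved Theorem \ref{dual-gamma} already presupposes for the family $\mathrm{Dom}_\gamma$ of sequences. What each route buys: the paper's citation is shorter and records the provenance of the fact, while your version makes the corollary self-contained (modulo Gruenhage's paracompactness theorem) and isolates a reusable principle --- payoff-insensitive duality --- that the paper exploits only tacitly; your remark that noncompactness of $X$ is precisely what makes $P$ a relation in the sense of Definition \ref{def.relation} is also correct and worth keeping.
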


\begin{proof}

We have just proven $(b)\leftrightarrow(c)$ in Corollary
\ref{moving-off-game}. The equivalences
$(a)\leftrightarrow(b)$ and $(b)\leftrightarrow(d)$ are
Theorem 5 of \cite{eberlein} and Theorem 2 of \cite{paracompactness}
respectively. Finally, $(c)\leftrightarrow(e)$ follows from Theorem
\ref{dual} with $P=Q=(K(X),K(X)\setminus\{\emptyset\},R)$, where
$CRL\leftrightarrow C\cap L=\emptyset$.
\end{proof}

\section{$\aleph_0$-modifications}

In this section, we study another variation of the game
$\mathsf{G}(P,Q)$ (resp. $\mathsf{G}_1(\mathrm{Dom}(P),\mathrm{Dom}(Q))$)
for which the existence of a winning strategy for player ONE (resp. TWO)
although apparently stronger, turns out to be equivalent to
the same condition for the original game.

This variation will be defined in terms of the following concept.

\begin{defin}
\label{def.aleph_0-modif}

The \emph{$\aleph_0$-modification} of a relation $P=(A,B,R)$ is the
relation
$P_{\aleph_0}=(A,[B]^{\le\aleph_0}\setminus\{\emptyset\},\widetilde
R)$, where $a\widetilde R E\leftrightarrow\forall b\in E\;(aRb)$.

\end{defin}

The equivalence previously mentioned can then be stated as follows.

\begin{prop}
\label{equiv.game-aleph_0}

Let $P$ and $Q$ be relations. The following conditions are equivalent:
\begin{itemize}
\item[$(a)$]
ONE has a winning strategy in the game $\mathsf{G}(P,Q)$;
\item[$(b)$]
ONE has a winning strategy in the game
$\mathsf{G}(P_{\aleph_0},Q_{\aleph_0})$.
\end{itemize}

\end{prop}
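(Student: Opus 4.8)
The plan is to establish the two implications separately. The direction $(b)\Rightarrow(a)$ is routine: I would first note that each $b\in B$ can be identified with the singleton $\{b\}\in[B]^{\le\aleph_0}\setminus\{\emptyset\}$, and that $aRb$ holds exactly when $a\widetilde R\{b\}$. Hence, from a winning strategy $\sigma$ for ONE in $\mathsf{G}(P_{\aleph_0},Q_{\aleph_0})$ I would define a strategy for ONE in $\mathsf{G}(P,Q)$ sending a history $(b_j)_{j<n}$ of TWO's moves to $\sigma((\{b_j\})_{j<n})$. Any play resulting from this strategy corresponds, via singletons, to a play of $\mathsf{G}(P_{\aleph_0},Q_{\aleph_0})$ in which ONE follows $\sigma$; since $c\widetilde T\{b\}\leftrightarrow cTb$, the condition $\{\{b_n\}:n\in\omega\}\in\mathrm{Dom}(Q_{\aleph_0})$ is equivalent to $\{b_n:n\in\omega\}\in\mathrm{Dom}(Q)$, so ONE wins.

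The substance lies in $(a)\Rightarrow(b)$, and my approach parallels the tree arguments in the proofs of Theorem \ref{dual} and Proposition \ref{ONE}$(c)$. Fix a winning strategy $\sigma\colon\mbox{}^{<\omega}B\to A$ for ONE in $\mathsf{G}(P,Q)$. The obstruction is that in $\mathsf{G}(P_{\aleph_0},Q_{\aleph_0})$ player ONE must answer TWO's countable move $E_n$ with a single $a_n\in A$, so one run of $\sigma$ cannot follow every element that TWO offers at once. Instead I would run one thread of $\sigma$ along each branch of the tree $\mbox{}^{<\omega}\omega$, dovetailed across the innings. To this end, fix for each nonempty $E\in[B]^{\le\aleph_0}$ a surjection $g_E\colon\omega\to E$, and enumerate $\mbox{}^{<\omega}\omega$ as $(N_n)_{n\in\omega}$ by nondecreasing length, so that $N_m\subsetneq N_n$ implies $m<n$ and $N_0=\emptyset$; let $\nu$ be the inverse enumeration, so that inning $n$ ``visits'' the node $N_n$. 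At inning $n$, writing $t=N_n$, I let ONE play $\sigma\bigl((b_j)_{j<|t|}\bigr)$, where $b_j=g_{E_{\nu(t\upharpoonright j)}}(t(j))$. This is well defined because $t\upharpoonright j\subsetneq t$ forces $\nu(t\upharpoonright j)<n$, so every set $E_{\nu(t\upharpoonright j)}$ already occurs in the history.

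The key claim I would verify is that, for every branch $f\in\mbox{}^\omega\omega$, reading off the nodes $f\upharpoonright k$ yields a legal play of $\mathsf{G}(P,Q)$ in which ONE obeys $\sigma$ and TWO's $k$-th move is $g_{E_{\nu(f\upharpoonright k)}}(f(k))\in E_{\nu(f\upharpoonright k)}$; legality of this move for TWO follows from $a_{\nu(f\upharpoonright k)}\widetilde R E_{\nu(f\upharpoonright k)}$. Since $\sigma$ is winning, the set of TWO's moves along $f$ lies in $\mathrm{Dom}(Q)$. To conclude, I would argue by contradiction: if the strategy above were not winning, some $c\in C$ would satisfy $\neg(c\widetilde T E_n)$ for all $n$, yielding for each $n$ a ``bad'' element $d_n\in E_n$ with $\neg(cTd_n)$. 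Building a branch $f$ recursively so that $g_{E_{\nu(f\upharpoonright k)}}(f(k))=d_{\nu(f\upharpoonright k)}$ (possible since each $g_E$ is onto), I would obtain a $\sigma$-play of $\mathsf{G}(P,Q)$ all of whose TWO-moves are bad for $c$, contradicting that they dominate in $Q$.

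I expect the scheduling to be the main obstacle: the enumeration of $\mbox{}^{<\omega}\omega$ must respect the ancestor relation so that the $\sigma$-history read along any branch refers only to set-moves already played, and I must check that the recursively constructed ``bad'' branch is a legitimate element of $\mbox{}^\omega\omega$. Once this bookkeeping is in place, both the per-branch winning-play claim and the final contradiction are immediate.
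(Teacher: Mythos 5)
Your proof is correct and is essentially the paper's own argument: the paper likewise threads the winning strategy $\sigma$ along the branches of $\mbox{}^{<\omega}\omega$, using an injective map $s\mapsto m_s$ with $s\subseteq t\rightarrow m_s\le m_t$ (playing a dummy move $\tilde{a}$ at innings outside its range, where you instead use a bijective enumeration), fixes a surjective enumeration $E=\{b^E_k:k\in\omega\}$ of each countable set just as your $g_E$ does, and derives the contradiction along a recursively constructed bad branch $f$ exactly as in your last paragraph. One small repair: no bijective enumeration of $\mbox{}^{<\omega}\omega$ can have nondecreasing length (there are infinitely many nodes of each positive length), but the property you actually invoke --- $N_m\subsetneq N_n\Rightarrow m<n$ --- is all that is needed and is attainable, e.g.\ by collapsing the range of the paper's injection $m_s=\prod_{i\in\mathrm{dom}(s)}p_i^{s(i)+1}$ order-isomorphically onto $\omega$.
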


\begin{proof}

Write $P=(A,B,R)$ and $Q=(C,D,T)$.

The implication $(b)\rightarrow(a)$ is immediate, since
$\mathsf{G}(P,Q)$ is equivalent to the game
$\mathsf{G}(P_{\aleph_0},Q_{\aleph_0})$ played with the additional
restriction that TWO must choose one-element subsets of $B$.

For $(a)\rightarrow(b)$, let $\sigma:\mbox{}^{<\omega}B\rightarrow A$
be a winning strategy for ONE in $\mathsf{G}(P,Q)$.
Fix an injective function $s\mapsto m_s$ from
$\mbox{}^{<\omega}\omega$ to $\omega$ satisfying $s\subseteq
t\rightarrow m_s\le m_t$ for every $s,t\in\mbox{}^{<\omega}\omega$ ---
for example, define $m_s=\prod_{i\in\mathrm{dom}(s)}p_i^{s(i)+1}$,
where $p_i$ is the $i$-th prime number.
Now write each $E\in[B]^{\le\aleph_0}\setminus\{\emptyset\}$ as
$E=\{b^E_k:k\in\omega\}$, and let $\tilde{a}\in A$ be fixed.
Define
$\varphi:\mbox{}^{<\omega}([B]^{\le\aleph_0}\setminus\{\emptyset\})\rightarrow
A$
by
$$
\varphi((E_j)_{j<n})=\left\{\begin{array}{lll}
\sigma\left(\left(
b^{E_{m_{s\upharpoonright i}}}_{s(i)}
\right)_{i\in\mathrm{dom}(s)}
\right),&&\textrm{ if }n=m_s;\\
&&\\
\tilde{a},&&\textrm{ otherwise.}
\end{array}\right.
$$
We claim that $\varphi$ is a winning strategy for ONE in
$\mathsf{G}(P_{\aleph_0},Q_{\aleph_0})$.

Indeed, let $(a_0,E_0,a_1,E_1,\dots)$ be a play of
$\mathsf{G}(P_{\aleph_0},Q_{\aleph_0})$ in which ONE follows
the strategy $\varphi$, and suppose that there is $c\in C$ such that
$c\widetilde TE_n$ does not hold for any $n\in\omega$. Define
$f:\omega\rightarrow\omega$ by recursively choosing $f(i)\in\omega$
such that $cTb^{E_{m_{f\upharpoonright i}}}_{f(i)}$ does not
hold. Then we obtain a contradiction from the fact that
\begin{center}
{\renewcommand{\arraystretch}{3.0}
\renewcommand{\tabcolsep}{0.25cm}
\begin{tabular}{c|c}
ONE & TWO \\

\hline

$\sigma(\emptyset)$ & $b^{E_{m_\emptyset}}_{f(0)}$\\
$\sigma\left(\left(b^{E_{m_\emptyset}}_{f(0)}\right)\right)$ &
$b^{E_{m_{(f(0))}}}_{f(1)}$\\
$\sigma\left(\left(b^{E_{m_\emptyset}}_{f(0)},b^{E_{m_{(f(0))}}}_{f(1)}\right)\right)$
& $b^{E_{m_{(f(0),f(1))}}}_{f(2)}$\\
$\vdots$ & $\vdots$ \\
$\sigma\left(\left(b^{E_{m_{f\upharpoonright j}}}_{f(j)}\right)_{j<i}\right)$ &
$b^{E_{m_{f\upharpoonright i}}}_{f(i)}$\\
$\vdots$ & $\vdots$
\end{tabular}}\\
\end{center}
is a play of $\mathsf{G}(P,Q)$ in which ONE makes use of the
winning strategy $\sigma$ and loses --- since none of TWO's
moves dominate $c$ in $T$.
\end{proof}

In view of Theorem \ref{dual}, we can rewrite Proposition
\ref{equiv.game-aleph_0} as:

\begin{corol}
\label{G1-aleph_0}

Let $P$ and $Q$ be relations. The following conditions are equivalent:
\begin{itemize}
\item[$(a)$]
TWO has a winning strategy in the game
$\mathsf{G}_1(\mathrm{Dom}(P),\mathrm{Dom}(Q))$;
\item[$(b)$]
TWO has a winning strategy in the game
$\mathsf{G}_1(\mathrm{Dom}(P_{\aleph_0}),\mathrm{Dom}(Q_{\aleph_0}))$.
\end{itemize}

\end{corol}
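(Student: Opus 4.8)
The plan is to deduce this corollary directly from Proposition~\ref{equiv.game-aleph_0} by feeding both of its sides through the duality of Theorem~\ref{dual}; no new combinatorics are needed, only careful tracking of the role-reversal between ONE and TWO that duality entails.

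First I would record that $P_{\aleph_0}$ and $Q_{\aleph_0}$ are themselves relations in the sense of Definition~\ref{def.relation}: given $a\in A$, any $b\in B$ with $aRb$ yields the singleton $E=\{b\}\in[B]^{\le\aleph_0}\setminus\{\emptyset\}$ with $a\widetilde R E$, so the seriality condition is inherited. This guarantees that Theorem~\ref{dual} is applicable to the pair $(P_{\aleph_0},Q_{\aleph_0})$ exactly as it is to $(P,Q)$.

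Now I would chain together three equivalences. By Theorem~\ref{dual}, the games $\mathsf{G}(P,Q)$ and $\mathsf{G}_1(\mathrm{Dom}(P),\mathrm{Dom}(Q))$ are dual; reading off the clause of Definition~\ref{def.dual} in which TWO winning in one game corresponds to ONE winning in the other, condition $(a)$ is equivalent to ONE having a winning strategy in $\mathsf{G}(P,Q)$. By Proposition~\ref{equiv.game-aleph_0}, the latter is equivalent to ONE having a winning strategy in $\mathsf{G}(P_{\aleph_0},Q_{\aleph_0})$. Finally, applying Theorem~\ref{dual} once more, this time to the relations $P_{\aleph_0}$ and $Q_{\aleph_0}$, the existence of such a strategy for ONE is equivalent to TWO having a winning strategy in $\mathsf{G}_1(\mathrm{Dom}(P_{\aleph_0}),\mathrm{Dom}(Q_{\aleph_0}))$, that is, to condition $(b)$. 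Concatenating these equivalences gives $(a)\Leftrightarrow(b)$.

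The only point that requires care---and it is a bookkeeping point rather than a genuine obstacle---is making sure the ONE/TWO swap in the definition of dual games is invoked consistently at both ends of the chain, so that the two applications of duality translate a statement about TWO into one about ONE and then back into one about TWO. Since the real content (the passage from a strategy in $\mathsf{G}(P,Q)$ to one in $\mathsf{G}(P_{\aleph_0},Q_{\aleph_0})$) is already carried by Proposition~\ref{equiv.game-aleph_0}, nothing further is needed.
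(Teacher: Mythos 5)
Your proof is correct and is essentially the paper's own argument: the paper derives Corollary~\ref{G1-aleph_0} from Proposition~\ref{equiv.game-aleph_0} by exactly this double application of the duality in Theorem~\ref{dual}, once to $(P,Q)$ and once to $(P_{\aleph_0},Q_{\aleph_0})$. Your additional check that $P_{\aleph_0}$ and $Q_{\aleph_0}$ satisfy the seriality condition of Definition~\ref{def.relation} is a sound (if routine) verification that the paper leaves implicit.
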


The following result is Theorem 5.1 of \cite{telg2}. Given a nonempty
family $\mathbf{K}$ of subsets of a topological space $X$, we call
\emph{$\mathbf{K}$-open game on $X$} the game in which, in each inning
$n\in\omega$, ONE chooses $K_n\in\mathbf{K}$ and then TWO
picks an open set $U_n\subseteq X$ with $K_n\subseteq U_n$; the
winner is ONE if $X=\bigcup_{n\in\omega}U_n$, and TWO
otherwise. The \emph{$\mathbf{K}$-$G_\delta$ game on $X$} is played
according to the same rules, replacing ``open'' with ``$G_\delta$''.

\begin{corol}[Telg\'arsky \cite{telg2}]
\label{telg83}

Let $X$ be a topological space and $\mathbf{K}$ be a nonempty family
of subsets of $X$. The following conditions are equivalent:
\begin{itemize}
\item[$(a)$]
ONE has a winning strategy in the $\mathbf{K}$-open game on $X$;
\item[$(b)$]
ONE has a winning strategy in the $\mathbf{K}$-$G_\delta$ game
on $X$.
\end{itemize}

\end{corol}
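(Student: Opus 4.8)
The statement to be proved (Corollary \ref{telg83}) is that, for a space $X$ and a nonempty family $\mathbf K$ of subsets, player ONE has a winning strategy in the $\mathbf K$-open game if and only if ONE has a winning strategy in the $\mathbf K$-$G_\delta$ game. My plan is to obtain this as a direct application of Proposition \ref{equiv.game-aleph_0} (or rather its game-theoretic content about $\aleph_0$-modifications), by recognizing both games as instances of $\mathsf G(P,Q)$ and its $\aleph_0$-modification $\mathsf G(P_{\aleph_0},Q_{\aleph_0})$ for a suitably chosen relation.

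\begin{proof}
Let $P = Q = (\mathbf K, \tau, \supseteq^{-1})$, i.e.\ the relation $(\mathbf K,\tau,R)$ where $KRU \leftrightarrow K\subseteq U$. (If $\mathbf K$ contains a set meeting every nonempty open set trivially one adjusts by working with $\tau\setminus\{\emptyset\}$, but for the covering condition the relation as stated is what we want.) Then $\mathrm{Dom}(P)=\mathcal O_X$ is the family of open covers of $X$, and the game $\mathsf G(P,Q)$ is exactly the $\mathbf K$-open game: in inning $n$, ONE plays $K_n\in\mathbf K$, TWO plays an open $U_n\supseteq K_n$, and ONE wins if $\{U_n:n\in\omega\}\in\mathcal O_X$, i.e.\ $X=\bigcup_{n\in\omega}U_n$. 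Now consider the $\aleph_0$-modification $P_{\aleph_0}=(\mathbf K,[\tau]^{\le\aleph_0}\setminus\{\emptyset\},\widetilde R)$, where $K\widetilde R E \leftrightarrow \forall U\in E\,(K\subseteq U)$. Here TWO's move in response to $K_n$ is a nonempty countable family $E_n$ of open sets each containing $K_n$; equivalently, TWO supplies a $G_\delta$ set $\bigcap E_n \supseteq K_n$ together with a countable witnessing family. Thus $\mathsf G(P_{\aleph_0},Q_{\aleph_0})$ is (up to the trivial identification of a countable family of open sets with the $G_\delta$ set it defines) precisely the $\mathbf K$-$G_\delta$ game: the winning condition for ONE becomes $\{b : b\in E_n,\ n\in\omega\}\in\mathcal O_X$, i.e.\ $X$ is covered by the countably many open sets played, which is the covering-by-$G_\delta$'s condition once one notes a $G_\delta$ cover can be refined to the countable open family defining it. The equivalence $(a)\leftrightarrow(b)$ of Proposition \ref{equiv.game-aleph_0} applied to this $P=Q$ then yields exactly the claimed equivalence.
\end{proof}

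\emph{Anticipated obstacle.} The one point requiring care, and where I would slow down, is the precise translation between TWO's move in $P_{\aleph_0}$ (a countable family $E_n$ of open supersets of $K_n$) and TWO's move in the $\mathbf K$-$G_\delta$ game (a single $G_\delta$ superset of $K_n$). These are not literally the same object, so I must check that the winning conditions match under the identification $E_n \mapsto \bigcap E_n$: ONE wins $\mathsf G(P_{\aleph_0},Q_{\aleph_0})$ iff the union of \emph{all} open sets appearing in the $E_n$ covers $X$, whereas ONE wins the $G_\delta$ game iff the $G_\delta$ sets $\bigcap E_n$ cover $X$. A covering by $G_\delta$ sets $\bigcap E_n$ immediately gives a covering by the open sets in $\bigcup_n E_n$ (each point in some $\bigcap E_n$ lies in every member of $E_n$), and conversely a strategy achieving an open cover from the $E_n$'s can be massaged into one achieving a $G_\delta$ cover, since ONE controls $K_n$ and TWO's $G_\delta$ response must contain $K_n$. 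I expect this bookkeeping to be routine but it is the only genuinely non-formal step; the heart of the result is already done inside Proposition \ref{equiv.game-aleph_0}, so the corollary reduces to correctly naming the relation and verifying the two dictionaries (game $\leftrightarrow$ relation, and open-family $\leftrightarrow$ $G_\delta$) line up.
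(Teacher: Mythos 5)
There is a genuine gap, and it sits precisely where you flagged your ``anticipated obstacle'': in the choice of the relation $Q$ and hence in the winning condition. You set $P=Q=(\mathbf{K},\tau,R)$ with $KRU\leftrightarrow K\subseteq U$ and assert $\mathrm{Dom}(P)=\mathcal{O}_X$. That is false in general: $\mathrm{Dom}((\mathbf{K},\tau,\subseteq))$ is the family of open collections $\mathcal{U}$ such that \emph{every $K\in\mathbf{K}$ is contained in a single member of} $\mathcal{U}$ (a ``$\mathbf{K}$-cover'', as in the row for $\mathcal{K}_X$ in Example \ref{ex.Dom}), not the family of open covers of $X$. Since ONE's winning condition in $\mathsf{G}(P,Q)$ is $\{b_n:n\in\omega\}\in\mathrm{Dom}(Q)$, your game is the one in which ONE wins iff TWO's open sets $\mathbf{K}$-dominate, whereas the $\mathbf{K}$-open game requires $X=\bigcup_{n\in\omega}U_n$. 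These games are not equivalent: take $X=\mathbb{R}$ and $\mathbf{K}=\{\{q\}:q\in\mathbb{Q}\}$. In your game ONE wins outright by enumerating $\mathbb{Q}$ and playing $\{q_n\}$ in inning $n$; but in the $\mathbf{K}$-open game TWO wins by answering $\{q\}$ in inning $n$ with an open interval of length $2^{-n}$ around $q$, so that the union of TWO's moves has finite measure and never covers $\mathbb{R}$. A second, compounding error: your reading of the winning condition of $\mathsf{G}(P_{\aleph_0},Q_{\aleph_0})$ as ``the union of all open sets appearing in the $E_n$ covers $X$'' is not what Definitions \ref{def.Pgame} and \ref{def.aleph_0-modif} give. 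ONE wins iff every $c\in C$ satisfies $c\widetilde{T}E_n$ for some \emph{single} $n$, i.e.\ $c$ is $T$-related to \emph{every} member of that one $E_n$; under your choice of $Q$ this reads ``every $K\in\mathbf{K}$ is contained in $\bigcap E_n$ for some $n$'', which is neither the open-cover nor the $G_\delta$-cover condition of the corollary.

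The repair is the paper's choice: apply Proposition \ref{equiv.game-aleph_0} with $P=(\mathbf{K},\tau,\subseteq)$ but $Q=(X,\tau,\in)$ --- the proposition does not require $P=Q$, and forcing $P=Q$ is what broke the translation. Then $\mathrm{Dom}(Q)=\mathcal{O}_X$, so $\mathsf{G}(P,Q)$ is literally the $\mathbf{K}$-open game; and in $\mathsf{G}(P_{\aleph_0},Q_{\aleph_0})$ TWO's move is a nonempty countable $E_n\subseteq\tau$ with $K_n\subseteq U$ for all $U\in E_n$, while ONE wins iff every $x\in X$ lies in $\bigcap E_n$ for some $n$, i.e.\ iff the $G_\delta$ sets $\bigcap E_n$ cover $X$. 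With this $Q$, your dictionary $E_n\mapsto\bigcap E_n$ (and conversely, $W=\bigcap_{m\in\omega}U_m\supseteq K_n$ forces $U_m\supseteq K_n$ for every $m$) does line up, and the bookkeeping you anticipated becomes the routine check you expected. So your overall strategy is exactly the paper's, but as written the proof establishes a different, inequivalent statement.
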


\begin{proof}

Apply Proposition \ref{equiv.game-aleph_0} with
$P=(\mathbf{K},\tau,\subseteq)$ and $Q=(X,\tau,\in)$.
The result follows from the observation that the
games $\mathbf{K}$-$G_\delta$ and
$\mathsf{G}(P_{\aleph_0},Q_{\aleph_0})$ are equivalent.
\end{proof}

As another consequence of Proposition \ref{equiv.game-aleph_0},
we have:

\begin{corol}
\label{supertight-game}

Let $X$ be a topological space and $x\in X$. The following conditions
are equivalent:
\begin{itemize}
\item[$(a)$]
TWO has a winning strategy in the game
$\mathsf{G}_1(\Omega_x,\Omega_x)$;
\item[$(b)$]
TWO has a winning strategy in the game
$\mathsf{G}_1(\pi\mathcal{N}_{x},\pi\mathcal{N}_{x})$;
\item[$(c)$]
TWO has a winning strategy in the game
$\mathsf{G}_1(\pi\mathcal{N}^{\aleph_0}_{x},\pi\mathcal{N}^{\aleph_0}_{x})$.
\end{itemize}

\end{corol}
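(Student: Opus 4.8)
The plan is to prove the three conditions equivalent by establishing the cycle $(a)\Rightarrow(c)\Rightarrow(b)\Rightarrow(a)$, where the step $(a)\leftrightarrow(c)$ is handed to us directly by the $\aleph_0$-modification machinery of Corollary \ref{G1-aleph_0}, and the two remaining implications are elementary strategy transfers. Concretely, I would apply Corollary \ref{G1-aleph_0} with $P=Q=(\tau_x,X,\ni)$.

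The computational core of the $(a)\leftrightarrow(c)$ step is to identify the dominating families of $P$ and of $P_{\aleph_0}$. On the one hand, $\mathrm{Dom}((\tau_x,X,\ni))=\Omega_x$: a set $A\subseteq X$ is dominating exactly when $A\cap U\neq\emptyset$ for every $U\in\tau_x$, i.e. when $x\in\overline{A}$. On the other hand, by Definition \ref{def.aleph_0-modif} we have $(\tau_x,X,\ni)_{\aleph_0}=(\tau_x,[X]^{\le\aleph_0}\setminus\{\emptyset\},\widetilde\ni)$ with $U\,\widetilde\ni\,E\leftrightarrow\forall b\in E\,(b\in U)\leftrightarrow E\subseteq U$; hence a family $Z\subseteq[X]^{\le\aleph_0}\setminus\{\emptyset\}$ is dominating in $P_{\aleph_0}$ precisely when for every $U\in\tau_x$ there is $E\in Z$ with $E\subseteq U$ — that is, when $Z$ is a $\pi$-network at $x$. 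Thus $\mathrm{Dom}(P_{\aleph_0})=\pi\mathcal{N}^{\aleph_0}_x$, and Corollary \ref{G1-aleph_0} yields $(a)\leftrightarrow(c)$ at once.

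For $(c)\Rightarrow(b)$ I would use that $\pi\mathcal{N}_x\subseteq\pi\mathcal{N}^{\aleph_0}_x$, so every move of ONE in $\mathsf{G}_1(\pi\mathcal{N}_{x},\pi\mathcal{N}_{x})$ is also a legal move in $\mathsf{G}_1(\pi\mathcal{N}^{\aleph_0}_{x},\pi\mathcal{N}^{\aleph_0}_{x})$. Feeding these moves into a winning strategy for TWO in the latter game, TWO's responses are elements of finite-set networks, hence finite sets, and a winning family in $\pi\mathcal{N}^{\aleph_0}_x$ consisting of finite sets already lies in $\pi\mathcal{N}_x$; so the same strategy wins $(b)$. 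For $(b)\Rightarrow(a)$ I would pass from points to singletons: to each $A\in\Omega_x$ associate $\widehat{A}=\{\{a\}:a\in A\}$, which belongs to $\pi\mathcal{N}_x$ precisely because $x\in\overline{A}$ forces every neighborhood of $x$ to meet $A$. Running TWO's winning strategy for $\mathsf{G}_1(\pi\mathcal{N}_{x},\pi\mathcal{N}_{x})$ on the networks $\widehat{A_n}$ produces singletons $\{b_n\}$ with $b_n\in A_n$ whose collection is a $\pi$-network at $x$; this says every neighborhood of $x$ contains some $b_n$, i.e. $\{b_n:n\in\omega\}\in\Omega_x$, so TWO wins $\mathsf{G}_1(\Omega_x,\Omega_x)$.

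The only genuinely delicate point is the bookkeeping in the $\aleph_0$-modification — verifying that $\widetilde\ni$ unwinds to set-inclusion and that $\mathrm{Dom}(P_{\aleph_0})$ is \emph{exactly} the countable $\pi$-network family. Once that identification is secured, Corollary \ref{G1-aleph_0} does all the work for $(a)\leftrightarrow(c)$, and I expect no obstacle from the two transfer arguments: in each case TWO's responses automatically remain in the smaller family (finite sets stay finite; singleton picks stay singletons), so the winning condition is preserved without any further construction.
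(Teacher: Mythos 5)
Your proof is correct and follows essentially the same route as the paper: the paper also obtains $(a)\leftrightarrow(c)$ by applying Corollary \ref{G1-aleph_0} with $P=Q=(\tau_x,X,\ni)$ (so that $\mathrm{Dom}(P)=\Omega_x$ and $\mathrm{Dom}(P_{\aleph_0})=\pi\mathcal{N}^{\aleph_0}_x$), and dismisses $(c)\rightarrow(b)\rightarrow(a)$ as clear. Your only addition is to spell out those two transfer arguments (restriction to finite-set networks, and the singleton trick $A\mapsto\{\{a\}:a\in A\}$), which are exactly the intended ones.
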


\begin{proof}

It is clear that $(c)\rightarrow(b)\rightarrow(a)$. Now the
equivalence between $(a)$ and $(c)$ follows from Corollary
\ref{G1-aleph_0} with $P=Q=(\tau_x,X,\ni)$.
\end{proof}

As an immediate consequence of Corollary
\ref{supertight-game} (see also Corollary \ref{supertight-equiv}), we
have the following result, which answers Question 4.9 of \cite{ab} in
the affirmative.

\begin{corol}
\label{answer.aurichi-bella}

Let $X$ be a topological space and $x\in X$.
If TWO has a winning strategy in the game
$\mathsf{G}_1(\Omega_x,\Omega_x)$ on $X$, then
$\pi\mathcal{N}^{\aleph_0}_x$-Lindel\"of holds.

\end{corol}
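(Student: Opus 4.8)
The plan is to read the statement off the equivalences in Corollary~\ref{supertight-game} combined with the elementary chain of implications for the game $\mathsf{G}_1$ recorded just before Theorem~\ref{dual}. No new construction is required: all the genuine content has already been packaged into Corollary~\ref{supertight-game}, so the argument is a matter of stringing together results proved earlier.

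First I would apply Corollary~\ref{supertight-game}. The hypothesis is exactly condition~$(a)$ there, so the equivalence $(a)\Leftrightarrow(c)$ yields that TWO has a winning strategy in the game $\mathsf{G}_1(\pi\mathcal{N}^{\aleph_0}_x,\pi\mathcal{N}^{\aleph_0}_x)$. This is the only substantive step, and its weight lies entirely in the $\aleph_0$-modification machinery of Section~7: concretely, Corollary~\ref{G1-aleph_0} applied to $P=Q=(\tau_x,X,\ni)$, whose $\aleph_0$-modification $P_{\aleph_0}=(\tau_x,[X]^{\le\aleph_0}\setminus\{\emptyset\},\supseteq)$ has $\mathrm{Dom}(P_{\aleph_0})=\pi\mathcal{N}^{\aleph_0}_x$. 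Since that corollary may be assumed, nothing remains to be verified at this stage.

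Second I would invoke the chain $\text{TWO-winning}\Rightarrow\mathsf{S}_1(\mathcal{A},\mathcal{B})\Rightarrow(\mathcal{A},\mathcal{B})$-Lindel\"of displayed before Theorem~\ref{dual}, which holds for arbitrary families $\mathcal{A},\mathcal{B}$. Instantiating it with $\mathcal{A}=\mathcal{B}=\pi\mathcal{N}^{\aleph_0}_x$, and recalling from Definition~\ref{def.Lindelof} that $(\mathcal{A},\mathcal{A})$-Lindel\"of is by definition $\mathcal{A}$-Lindel\"of, I conclude that $\pi\mathcal{N}^{\aleph_0}_x$-Lindel\"of holds, which is the assertion.

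I expect no real obstacle. The only care needed is bookkeeping: one should check that the abstract statement $\pi\mathcal{N}^{\aleph_0}_x$-Lindel\"of unwinds, via Example~\ref{ex.Dom}, to the concrete claim that every $\pi$-network at $x$ whose members are nonempty countable subsets of $X$ contains a countable subfamily that is again such a $\pi$-network --- which is the affirmative answer sought for Question~4.9 of \cite{ab}. If one preferred to avoid citing the displayed chain, the same conclusion follows by first passing through $\mathsf{S}_1(\pi\mathcal{N}^{\aleph_0}_x,\pi\mathcal{N}^{\aleph_0}_x)$, as noted after Definition~\ref{def.S1-Sfin}.
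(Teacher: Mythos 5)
Your proposal is correct and follows essentially the same route as the paper, which likewise obtains the statement as an immediate consequence of the equivalence $(a)\Leftrightarrow(c)$ in Corollary \ref{supertight-game} (itself Corollary \ref{G1-aleph_0} applied to $P=Q=(\tau_x,X,\ni)$, whose $\aleph_0$-modification has $\mathrm{Dom}(P_{\aleph_0})=\pi\mathcal{N}^{\aleph_0}_x$), combined with the chain of implications from a winning strategy for TWO down to $(\mathcal{A},\mathcal{B})$-Lindel\"of recorded before Theorem \ref{dual}. Your closing remark about routing through $\mathsf{S}_1(\pi\mathcal{N}^{\aleph_0}_x,\pi\mathcal{N}^{\aleph_0}_x)$ corresponds to the paper's parenthetical pointer to Corollary \ref{supertight-equiv}.
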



\section{$\aleph_0$-preserving relations}
\label{sec.aleph_0-preserving}

Inspired by some features of relations of the form $P_{\aleph_0}$
introduced in the previous section (see Lemma
\ref{P_aleph_0-preserving}), we will now aim at finding
general conditions on relations under which a Lindel\"of-like
property turns out to be strong enough to yield the nonexistence
of a winning strategy for ONE in the associated selective game.

\begin{defin}
\label{countably.compatible}

Let $P=(A,B,R)$ be a relation and $\preceq$ be a partial order on
$B$. We say that $\preceq$ is
\emph{countably downwards $P$-compatible} if, for every $a\in A$ and
every $E\in[B]^{\le\aleph_0}\setminus\{\emptyset\}$,
$$
(\forall b\in E\;(aRb))\rightarrow\exists\tilde{b}\in
B\;(aR\tilde{b}\;\&\;\forall b\in E\;(\tilde{b}\preceq b)).
$$

\end{defin}

\begin{defin}
\label{aleph_0-preserving}

A relation $P=(A,B,R)$ is \emph{$\aleph_0$-preserving} if there is a
partial order $\preceq$ on $B$ that is both upwards $P$-compatible
(see Definition \ref{def.order}) and countably downwards
$P$-compatible.

\end{defin}

Our main examples of $\aleph_0$-preserving relations will be of
the form $P_{\aleph_0}$:

\begin{lemma}
\label{P_aleph_0-preserving}

Let $P$ be a relation. Then $P_{\aleph_0}$ (see Definition
\ref{def.aleph_0-modif}) is an $\aleph_0$-preserving relation.

\end{lemma}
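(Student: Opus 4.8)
The plan is to exhibit an explicit partial order witnessing that $P_{\aleph_0}$ is $\aleph_0$-preserving, namely reverse inclusion on the codomain of $P_{\aleph_0}$. Write $P=(A,B,R)$, so that $P_{\aleph_0}=(A,[B]^{\le\aleph_0}\setminus\{\emptyset\},\widetilde{R})$ as in Definition \ref{def.aleph_0-modif}. I would define a partial order $\preceq$ on $[B]^{\le\aleph_0}\setminus\{\emptyset\}$ by $E_1\preceq E_2\leftrightarrow E_1\supseteq E_2$, and then verify that this $\preceq$ is both upwards $P_{\aleph_0}$-compatible (Definition \ref{def.order}) and countably downwards $P_{\aleph_0}$-compatible (Definition \ref{countably.compatible}), which is precisely what Definition \ref{aleph_0-preserving} demands.

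For upwards compatibility, suppose $a\widetilde{R}E_1$ and $E_1\preceq E_2$, i.e. $E_2\subseteq E_1$. Since $a\widetilde{R}E_1$ unfolds to ``$aRb$ for every $b\in E_1$'' and $E_2\subseteq E_1$, we immediately obtain $aRb$ for every $b\in E_2$, that is, $a\widetilde{R}E_2$. This step is routine set-theoretic bookkeeping and requires no additional hypotheses on $P$.

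The substantive point is countable downwards compatibility. Here I am given $a\in A$ together with a countable nonempty family $\mathcal{E}\subseteq[B]^{\le\aleph_0}\setminus\{\emptyset\}$ such that $a\widetilde{R}E$ for every $E\in\mathcal{E}$, and I must produce a single $\tilde{E}\in[B]^{\le\aleph_0}\setminus\{\emptyset\}$ with $a\widetilde{R}\tilde{E}$ and $\tilde{E}\preceq E$ (equivalently $\tilde{E}\supseteq E$) for every $E\in\mathcal{E}$. The natural candidate is $\tilde{E}=\bigcup\mathcal{E}$. It then remains to check three things: (i) $\tilde{E}\in[B]^{\le\aleph_0}\setminus\{\emptyset\}$, which holds because a countable union of countable sets is countable and $\tilde{E}$ is visibly nonempty; (ii) $a\widetilde{R}\tilde{E}$, since any $b\in\tilde{E}$ belongs to some $E\in\mathcal{E}$, whence $aRb$ follows from $a\widetilde{R}E$; and (iii) $\tilde{E}\supseteq E$ for each $E\in\mathcal{E}$, which is immediate from the definition of union. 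Once these are in place, $\preceq$ witnesses the claim.

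I do not anticipate any genuine obstacle. The only two points worth flagging are, first, getting the orientation of $\preceq$ correct: enlarging $E$ makes the condition $a\widetilde{R}E$ \emph{harder} to satisfy, so growth of $E$ must correspond to moving $\preceq$-downward, which forces $\preceq$ to be reverse inclusion rather than inclusion. Second, one relies on the standard fact that a countable union of countable sets is countable to guarantee that $\bigcup\mathcal{E}$ remains in the codomain $[B]^{\le\aleph_0}\setminus\{\emptyset\}$ of $P_{\aleph_0}$; this is unproblematic in the ZFC setting of the paper.
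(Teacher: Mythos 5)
Your proof is correct and is exactly the paper's argument: the paper's proof consists of observing that the reverse-inclusion order $E_1\preceq E_2\leftrightarrow E_1\supseteq E_2$ on $[B]^{\le\aleph_0}\setminus\{\emptyset\}$ witnesses $\aleph_0$-preservation, leaving the verification implicit. You simply spell out the two routine checks (upwards compatibility from $E_2\subseteq E_1$, countable downwards compatibility via $\tilde{E}=\bigcup\mathcal{E}$, using that a countable union of countable sets is countable), all of which are sound.
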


\begin{proof}

Just note that, if $P=(A,B,R)$, then the partial order $\preceq$ on
$[B]^{\le\aleph_0}$ defined by $E_1\preceq E_2\leftrightarrow
E_1\supseteq E_2$ witnesses the fact that $P_{\aleph_0}$ is
$\aleph_0$-preserving.
\end{proof}

The following proposition is the main result of this section.

\begin{prop}
\label{equiv.aleph_0-preserving}

Let $P$ be an $\aleph_0$-preserving relation. The following conditions
are equivalent:
\begin{itemize}
\item[$(a)$]
$\mathrm{Dom}(P)$-Lindel\"of;
\item[$(b)$]
$\mathsf{S}_1(\mathrm{Dom}(P),\mathrm{Dom}(P))$;
\item[$(c)$]
ONE does not have a winning strategy in the game
$\mathsf{G}_1(\mathrm{Dom}(P),\mathrm{Dom}(P))$.
\end{itemize}
\end{prop}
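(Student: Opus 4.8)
The plan is to notice first that two of the three implications are free. By the chain of implications displayed immediately after Definition \ref{def.Gfin} (taken with $\mathcal A=\mathcal B=\mathrm{Dom}(P)$), the statement ``ONE does not have a winning strategy in $\mathsf{G}_1(\mathrm{Dom}(P),\mathrm{Dom}(P))$'' implies $\mathsf{S}_1(\mathrm{Dom}(P),\mathrm{Dom}(P))$, which in turn implies $\mathrm{Dom}(P)$-Lindel\"of; thus $(c)\Rightarrow(b)\Rightarrow(a)$ hold for every relation. Consequently the entire content of the proposition is the single implication $(a)\Rightarrow(c)$, and this is the only place where the hypothesis that $P$ is $\aleph_0$-preserving can enter.

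The engine I would build is a merging lemma extracted from the witnessing order $\preceq$ (Definitions \ref{def.order}, \ref{countably.compatible}, \ref{aleph_0-preserving}): given $a\in A$ and a countable $E\subseteq B$ each of whose members dominates $a$, countable downwards $P$-compatibility produces a single $\tilde b\in B$ with $aR\tilde b$ and $\tilde b\preceq b$ for every $b\in E$, and then upwards $P$-compatibility upgrades this so that \emph{every} member of $E$ dominates not just $a$ but every $a'\in A$ with $a'R\tilde b$. As a warm-up (which also yields $(a)\Rightarrow(b)$ directly) I would run this on a sequence $(Z_n)_n$ in $\mathrm{Dom}(P)$: pick $b^a_n\in Z_n$ dominating $a$, merge $\{b^a_n:n\in\omega\}$ to a lower bound $\tilde b^a$ with $aR\tilde b^a$, apply $\mathrm{Dom}(P)$-Lindel\"of to the dominating family $\{\tilde b^a:a\in A\}$ to obtain a countable dominating $\{\tilde b^{a_m}:m\in\omega\}$, and select $c_n=b^{a_{m(n)}}_n\in Z_n$ along a surjection $m(\cdot)\colon\omega\to\omega$. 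The key point, to be carried into the game, is that once a target $a$ is assigned a class index $m$ (via $aR\tilde b^{a_m}$), upwards compatibility makes $b^{a_m}_n$ dominate $a$ for \emph{every} inning $n$, so domination becomes uniform across innings and a surjective schedule suffices.

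For $(a)\Rightarrow(c)$ I would fix a strategy $\sigma$ for ONE and defeat it. First build the tree of $\sigma$-legal plays indexed by $\mbox{}^{<\omega}\omega$: at a node $s$ the move $Z_s=\sigma(\cdots)\in\mathrm{Dom}(P)$ is determined, and $\mathrm{Dom}(P)$-Lindel\"of reduces it to a countable dominating $D_s=\{d^s_k:k\in\omega\}\subseteq Z_s$, whose elements index the children, the child $s^\frown k$ corresponding to TWO playing $d^s_k$. Setting $C^s_k=\{a\in A:k\text{ is least with }aRd^s_k\}$ partitions $A$ so that $d^s_k$ dominates all of $C^s_k$; hence a branch $f\in\mbox{}^\omega\omega$ is won by TWO precisely when $\bigcup_{n}C^{f\upharpoonright n}_{f(n)}=A$. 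To produce such a branch I would invoke the merging lemma together with a bookkeeping partition of $\omega$, exactly in the spirit of the constructions in Propositions \ref{ONE} and \ref{Gfin}: applying countable downwards $P$-compatibility to the responses attached to a fixed target along a branch, and one further application of $\mathrm{Dom}(P)$-Lindel\"of, collapses the targets to be covered to countably many classes, each of which—by upwards $P$-compatibility—admits a dominating child at every inning at which it is scheduled, so that a single surjective schedule yields one branch catching every $a\in A$.

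The hard part is precisely this last passage from a tree of selections to a single dominating branch. The tree dominates trivially (already $D_\emptyset$ does), but ONE may replace the dominating family at each response of TWO, so TWO cannot simply enumerate a fixed countable dominating set; some genuine merging must replace enumeration. What rescues the argument is the uniformity noted above: $\aleph_0$-preservation forces a merged lower bound to have a dominating representative available at \emph{every} node, which is exactly what reduces the uncountable target family to countably many classes that a bookkeeping branch can schedule. The technical heart, which I expect to be the main obstacle, is to carry out the scheduling recursively while respecting $\sigma$-legality and to check that it really exhausts $A$; I anticipate this needing the same partition-of-$\omega$ apparatus as Section 4, arranged so that the innings devoted to a given class convert that class's lower bound, via upwards $P$-compatibility, into an actual legal move of TWO inside ONE's current dominating family.
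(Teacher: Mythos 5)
Your proposal is correct and takes essentially the same route as the paper's proof: after using $(a)$ to shrink each move of ONE's strategy to a countable set, so that the strategy becomes a family $(b_t)_{t\in\mbox{}^{<\omega}\omega\setminus\{\emptyset\}}$ indexed by a countable tree, the paper merges, for each target $a\in A$ and each selector $F:\mbox{}^{<\omega}\omega\rightarrow\omega$ with $aRb_{s^\frown(F(s))}$ for all $s$, the countable set $\{b_{s^\frown(F(s))}:s\in\mbox{}^{<\omega}\omega\}$ into a single $\tilde{b}^F_a$ with $aR\tilde{b}^F_a$, applies $(a)$ once more to obtain countably many pairs $(a_n,F_n)$ with $\{\tilde{b}^{F_n}_{a_n}:n\in\omega\}\in\mathrm{Dom}(P)$, and defeats the strategy along the diagonal branch $f(n)=F_n(f\upharpoonright n)$, with upwards $P$-compatibility transferring domination exactly as in your ``classes''. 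Two small repairs to your wording: the merge must be taken over \emph{all} nodes of the reduced tree --- ``along a branch'' would be circular since the branch is what is being built, though your later remark that the merged bound has a dominating representative ``at every node'' shows you intend exactly this, and the countability of the reduced tree is precisely what makes countable downwards $P$-compatibility applicable --- and no Section 4 partition apparatus is needed, since the diagonal $f(n)=F_n(f\upharpoonright n)$ replaces your surjective schedule (also, TWO wins along $f$ \emph{if} $\bigcup_{n}C^{f\upharpoonright n}_{f(n)}=A$, not ``precisely when'', but only that direction is used).
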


\begin{proof}

Clearly, $(c)\rightarrow(b)\rightarrow(a)$. We will prove the
implication $(a)\rightarrow(c)$.

Let a strategy for ONE in
$\mathsf{G}_1(\mathrm{Dom}(P),\mathrm{Dom}(P))$ be fixed. By $(a)$, we
may assume that each of ONE's moves in this strategy is a
countable set; this allows us to regard such strategy as an indexed
family $(b_t)_{t\in\mbox{}^{<\omega}\omega\setminus\{\emptyset\}}$ ---
meaning that, if $s\in\mbox{}^n\omega$ is such that
TWO's choices in the first $n$ innings were
$(b_{s\upharpoonright i})_{i<n}$, then
ONE's move in the $n$-th inning is
$\{b_{s^\smallfrown(k)}:k\in\omega\}\in\mathrm{Dom}(P)$.

Now write $P=(A,B,R)$, and let $\preceq$ be a partial order on $B$
witnessing the fact that $P$ is $\aleph_0$-preserving.
For each $a\in A$ and each function
$F:\mbox{}^{<\omega}\omega\rightarrow\omega$, let then
$\tilde{b}^F_a\in B$ be such that $aR\tilde{b}^F_a$ and $\forall
s\in\mbox{}^{<\omega}\omega\;(\tilde{b}^F_a\preceq
b_{s^\smallfrown(F(s))})$.
Note that
$\{\tilde{b}^F_a:a\in
A,F\in\mbox{}^{(\mbox{}^{<\omega}\omega)}\omega\}\in\mathrm{Dom}(P)$;
by $(a)$, it follows that there exist $\{a_n:n\in\omega\}\subseteq A$
and
$\{F_n:n\in\omega\}\subseteq\mbox{}^{(\mbox{}^{<\omega}\omega)}\omega$
with $\{\tilde{b}^{F_n}_{a_n}:n\in\omega\}\in\mathrm{Dom}(P)$. Now
define $f:\omega\rightarrow\omega$ recursively by
$f(n)=F_n(f\upharpoonright n)$ for each $n\in\omega$. We claim that
$\{b_{f\upharpoonright(n+1)}:n\in\omega\}\in\mathrm{Dom}(P)$ --- which
shows that the strategy at hand for ONE can be defeated.

In order to see this, let $a\in A$ be arbitrary. Since
$\{\tilde{b}^{F_n}_{a_n}:n\in\omega\}\in\mathrm{Dom}(P)$, there is
$m\in\omega$ such that $aR\tilde{b}^{F_m}_{a_m}$; thus, as
$\tilde{b}^{F_m}_{a_m}\preceq b_{s^\smallfrown(F(s))}$ holds for
$s=f\upharpoonright m$ in particular, it follows from the equality
$f(m)=F_m(f\upharpoonright m)$ and the hypothesis that $\preceq$ is
upwards $P$-compatible that $aRb_{f\upharpoonright(m+1)}$, as required.
\end{proof}

A topological space is \emph{strongly Alster} \cite{renan} if
$\mathcal{G}_K$-Lindel\"of holds, where
$\mathcal{G}_K=\{\mathcal{W}:(\forall W\in\mathcal{W}\;(W$ is a
$G_\delta$ subset of $X))$ $\&\;(\forall C\in K(X)\;\exists
W\in\mathcal{W}\;(C\subseteq W))\}$.

\begin{corol}
\label{strongly.alster}

The following statements are equivalent for a topological space $X$:

\begin{itemize}

\item[$(a)$]
$X$ is strongly Alster;

\item[$(b)$]
$\mathsf{S}_1(\mathcal{G}_K,\mathcal{G}_K)$;

\item[$(c)$]
ONE does not have a winning strategy in the game
$\mathsf{G}_1(\mathcal{G}_K,\mathcal{G}_K)$.

\end{itemize}

\end{corol}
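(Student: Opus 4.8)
The plan is to realize $\mathcal{G}_K$ as the dominating family of a single $\aleph_0$-preserving relation and then quote Proposition \ref{equiv.aleph_0-preserving} directly. Write $\mathcal{G}_\delta$ for the collection of all $G_\delta$ subsets of $X$, and set $P=(K(X),\mathcal{G}_\delta,\subseteq)$. This is a relation in the sense of Definition \ref{def.relation}: since $X$ itself is a $G_\delta$ set, every compact $C\subseteq X$ is dominated by $X\in\mathcal{G}_\delta$. Unwinding Definition \ref{def.Dom}, a set $Z\subseteq\mathcal{G}_\delta$ belongs to $\mathrm{Dom}(P)$ exactly when every member of $Z$ is $G_\delta$ and every compact subset of $X$ is contained in some element of $Z$; that is, $\mathrm{Dom}(P)=\mathcal{G}_K$. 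Hence $\mathrm{Dom}(P)$-Lindel\"of is precisely the assertion that $X$ is strongly Alster, while $\mathsf{S}_1(\mathrm{Dom}(P),\mathrm{Dom}(P))$ and the statement about ONE's winning strategy coincide with conditions $(b)$ and $(c)$.

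Next I would verify that $P$ is $\aleph_0$-preserving (Definition \ref{aleph_0-preserving}) using the partial order $\preceq\,=\,\subseteq$ on $\mathcal{G}_\delta$. Upwards $P$-compatibility (Definition \ref{def.order}) is just transitivity of inclusion: if $C\subseteq W_1$ and $W_1\subseteq W_2$, then $C\subseteq W_2$. For countable downwards $P$-compatibility (Definition \ref{countably.compatible}), given $C\in K(X)$ and a nonempty countable $E\subseteq\mathcal{G}_\delta$ with $C\subseteq W$ for every $W\in E$, I would take $\tilde{W}=\bigcap E$. A countable intersection of $G_\delta$ sets is again a $G_\delta$ set, so $\tilde{W}\in\mathcal{G}_\delta$; moreover $C\subseteq\tilde{W}$, and $\tilde{W}\subseteq W$ (that is, $\tilde{W}\preceq W$) for each $W\in E$, which is exactly what the definition demands.

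With both compatibility conditions in hand, $P$ is $\aleph_0$-preserving, and applying Proposition \ref{equiv.aleph_0-preserving} to $P$ yields the equivalence of $(a)$, $(b)$ and $(c)$ word for word. I do not anticipate any real obstacle: the one substantive point is the stability of the class of $G_\delta$ sets under countable intersections, which is what makes $\subseteq$ countably downwards $P$-compatible; this plays the same structural role that $\supseteq$ plays in Lemma \ref{P_aleph_0-preserving}. Everything else is a matter of matching definitions.
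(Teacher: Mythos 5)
Your proposal is correct and follows the paper's own proof essentially verbatim: the paper likewise applies Proposition \ref{equiv.aleph_0-preserving} with $P=(K(X),G_\delta(X),\subseteq)$ and observes that $W_1\preceq W_2\leftrightarrow W_1\subseteq W_2$ witnesses that $P$ is $\aleph_0$-preserving. The extra details you supply --- that $\mathrm{Dom}(P)=\mathcal{G}_K$, that transitivity of $\subseteq$ gives upwards compatibility, and that closure of $G_\delta$ sets under countable intersections gives countable downwards compatibility --- are exactly the routine verifications the paper leaves implicit.
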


\begin{proof}

Apply Proposition \ref{equiv.aleph_0-preserving} with
$P=(K(X),G_\delta(X),\subseteq)$, where $K(X)=\{C\subseteq X:C$ is
compact$\}$ and $G_\delta(X)=\{W\subseteq X:W$ is a countable
intersection of open sets$\}$. (Note that $W_1\preceq
W_2\leftrightarrow W_1\subseteq W_2$ witnesses that $P$ is
$\aleph_0$-preserving.)
\end{proof}

The next corollary deals with a game that was also explored in
Corollary \ref{supertight-game}.

\begin{corol}
\label{supertight-equiv}

Let $X$ be a topological space and $x\in X$. The following conditions
are equivalent:

\begin{itemize}

\item[$(a)$]
$\pi\mathcal{N}^{\aleph_0}_x$-Lindel\"of holds;

\item[$(b)$]
$\mathsf{S}_1(\pi\mathcal{N}^{\aleph_0}_x,\pi\mathcal{N}^{\aleph_0}_x)$;

\item[$(c)$]
ONE does not have a winning strategy in the game
$\mathsf{G}_1(\pi\mathcal{N}^{\aleph_0}_x,\pi\mathcal{N}^{\aleph_0}_x)$.

\end{itemize}

\end{corol}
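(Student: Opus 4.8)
The plan is to recognize this corollary as a direct instance of Proposition \ref{equiv.aleph_0-preserving}, once the appropriate $\aleph_0$-preserving relation has been exhibited. From the last row of the table in Example \ref{ex.Dom}, we have $\pi\mathcal{N}^{\aleph_0}_x=\mathrm{Dom}(P)$ for $P=(\tau_x,[X]^{\le\aleph_0}\setminus\{\emptyset\},\supseteq)$, where $V R S$ reads $V\supseteq S$. Under this identification, conditions $(a)$, $(b)$, $(c)$ are literally the three statements $\mathrm{Dom}(P)$-Lindel\"of, $\mathsf{S}_1(\mathrm{Dom}(P),\mathrm{Dom}(P))$, and ``ONE does not have a winning strategy in $\mathsf{G}_1(\mathrm{Dom}(P),\mathrm{Dom}(P))$'' that appear in Proposition \ref{equiv.aleph_0-preserving}. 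So everything reduces to checking that this particular $P$ is $\aleph_0$-preserving.

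The cleanest way to establish this is to observe that $P$ is itself an $\aleph_0$-modification. Taking $Q=(\tau_x,X,\ni)$, so that $\mathrm{Dom}(Q)=\Omega_x$, I would verify that $Q_{\aleph_0}=P$: indeed, $V\widetilde{\ni}S$ unwinds to $\forall y\in S\;(y\in V)$, that is, $S\subseteq V$, i.e. $V\supseteq S$; hence the relation $\widetilde{\ni}$ of $Q_{\aleph_0}$ is exactly $\supseteq$, and the underlying first and second coordinate sets agree with those of $P$. Lemma \ref{P_aleph_0-preserving} then immediately yields that $P=Q_{\aleph_0}$ is $\aleph_0$-preserving, and applying Proposition \ref{equiv.aleph_0-preserving} to $P$ gives the equivalence of $(a)$, $(b)$, $(c)$ verbatim.

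There is essentially no obstacle in this argument; the only points requiring care are the bookkeeping that matches membership in $\mathrm{Dom}(Q_{\aleph_0})$ with being a $\pi$-network of nonempty countable sets at $x$, and the unwinding of $\widetilde{\ni}$ into $\supseteq$. If one prefers to bypass Lemma \ref{P_aleph_0-preserving} entirely, the alternative is a direct verification that the partial order $S_1\preceq S_2\leftrightarrow S_1\supseteq S_2$ on $[X]^{\le\aleph_0}\setminus\{\emptyset\}$ witnesses that $P$ is $\aleph_0$-preserving: upwards $P$-compatibility is immediate from transitivity of $\supseteq$, and countable downwards $P$-compatibility follows by taking, for a countable $E=\{S_n:n\in\omega\}$ with every $S_n\subseteq V$, the set $\tilde{S}=\bigcup_{n\in\omega}S_n$, which is nonempty, countable, still contained in $V$ (so $V R\tilde{S}$), and $\supseteq$-above every $S_n$ (so $\tilde{S}\preceq S_n$ for all $n$). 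With the $\aleph_0$-preserving property in hand, the conclusion is then read off directly from Proposition \ref{equiv.aleph_0-preserving}.
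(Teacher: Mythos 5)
Your proposal is correct and is essentially the paper's own proof: the paper likewise takes $P=(\tau_x,[X]^{\le\aleph_0},\supseteq)$, notes that it is $\aleph_0$-preserving by Lemma \ref{P_aleph_0-preserving} (i.e., because it is the $\aleph_0$-modification of $(\tau_x,X,\ni)$, exactly the identification $P=Q_{\aleph_0}$ you verify), and applies Proposition \ref{equiv.aleph_0-preserving}. Your alternative direct check with $\tilde S=\bigcup_{n\in\omega}S_n$ and $S_1\preceq S_2\leftrightarrow S_1\supseteq S_2$ is just the proof of Lemma \ref{P_aleph_0-preserving} unwound in this special case, so it adds no genuinely different route.
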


\begin{proof}

Note that $P=(\tau_x,[X]^{\le\aleph_0},\supseteq)$
is $\aleph_0$-preserving by Lemma \ref{P_aleph_0-preserving}. Now
apply Proposition \ref{equiv.aleph_0-preserving}.
\end{proof}

We note that the equivalence between $(a)$ and $(b)$ in
Corollary \ref{supertight-equiv} also follows from Proposition 2.5(2)
of \cite{bella-sakai}.


\section{Remarks}

\paragraph{A}

As not all topological properties can be expressed in terms of
relations, it should be made clear that there are selective
topological games that have been studied in the literature for which
the analogue of the previous results does not hold. We illustrate this
with the following selective property: A topological space
$X$ is \emph{selectively screenable} \cite{addis} if, for every
sequence $(\mathcal{U}_n)_{n\in\omega}$ of open covers of $X$, there
is a sequence $(\mathcal{V}_n)_{n\in\omega}$ of families of open
subsets of $X$ such that $X=\bigcup_{n\in\omega}\mathcal{V}_n$ and
each $\mathcal{V}_n$ is a pairwise disjoint partial refinement of
$\mathcal{U}_n$. It follows from Example 1 of
\cite{e.pol} and Theorem 2.2 of \cite{bab} that TWO having a winning
strategy in the game naturally associated with selective screenability
does not imply that the space is productively selectively screenable;
therefore, a result similar to Propositions \ref{G1} and \ref{Gfin}
could not be obtained for this concept.\\

\paragraph{B}

Having in mind the properties $\mathsf{E}_1$--$\mathsf{E}_4$ from the
Introduction, there seems to be a gap in Propositions \ref{G1}
and \ref{Gfin}, which motivates the two main open questions of this
paper:

\begin{prob}
\label{G1?}

Let $P$, $P'$, $Q$ and $Q'$ be relations such that
that TWO has a winning strategy in the game
$\mathsf{G}_1(\mathrm{Dom}(P'),\mathrm{Dom}(Q'))$ and
ONE does not have a winning strategy in the game
$\mathsf{G}_1(\mathrm{Dom}(P),\mathrm{Dom}(Q))$.
Does it follow that ONE does not have a winning strategy in the game 
$\mathsf{G}_1(\mathrm{Dom}(P\otimes P'),\mathrm{Dom}(Q\otimes Q'))$?

\end{prob}

\begin{prob}
\label{Gfin?}

Let $P=(A,B,R)$, $P'=(A',B',R')$, $Q=(C,D,T)$
and $Q'=(C',D',T')$ be relations with $B=D$
such that:
\begin{itemize}
\item[$\cdot$]
there is a partial order $\preceq$ on $B$ that is both
downwards $P$-compatible and upwards $Q$-compatible;
\item[$\cdot$]
TWO has a winning strategy in the game
$\mathsf{G}_{\mathrm{fin}}(\mathrm{Dom}(P'),\mathrm{Dom}(Q'))$; and
\item[$\cdot$]
ONE does not have a winning strategy in the game
$\mathsf{G}_{\mathrm{fin}}(\mathrm{Dom}(P),\mathrm{Dom}(Q))$.
\end{itemize}
Does it follow that ONE does not have a winning strategy in the game
$\mathsf{G}_{\mathrm{fin}}(\mathrm{Dom}(P\otimes
P'),\mathrm{Dom}(Q\otimes Q'))$?

\end{prob}

It should be pointed out that,
in many instances of the topological properties in which we are
interested in this paper, it is the case that ONE does not have a
winning strategy in the game
$\mathsf{G}_1(\mathrm{Dom}(P),\mathrm{Dom}(Q))$ if and
only if $\mathsf{S}_1(\mathrm{Dom}(P),\mathrm{Dom}(Q))$ holds
(and similarly for $\mathsf{G}_{\mathrm{fin}}$ and
$\mathsf{S}_{\mathrm{fin}}$); see e.g. \cite[Theorem 10]{hur} (for
the Menger game), \cite[Lemma 2]{pawl} (for the Rothberger game) and
\cite[Theorems 2 and 14]{sch5} (for the games
$\mathsf{G}_{\mathrm{fin}}(\mathcal{D},\mathcal{D})$ and
$\mathsf{G}_1(\mathcal{D},\mathcal{D})$).
As a consequence, none of these
instances could provide us with a negative answer to Problems
\ref{G1?} and \ref{Gfin?}.

It is also known that there are other instances
in which this equivalence does not hold
--- such as $\mathsf{G}_1(\mathfrak{D},\mathfrak{D})$
\cite[Example 3]{sch6} ---,
which could be a first attempt to answer Problem \ref{G1?} in the
negative. More explicitly:

\begin{prob}
\label{G1(D,D)?}

Let $X$ and $Y$ be topological spaces such that
TWO has a winning strategy in the game
$\mathsf{G}_1(\mathfrak{D}_X,\mathfrak{D}_X)$ and
ONE does not have a winning strategy in the game
$\mathsf{G}_1(\mathfrak{D}_Y,\mathfrak{D}_Y)$.
Does it follow that ONE does not have a winning strategy in the game 
$\mathsf{G}_1(\mathfrak{D}_{X\times Y},\mathfrak{D}_{X\times Y})$?

\end{prob}

\paragraph{C}

Regarding the four properties $\mathsf{E}_i$ mentioned in the
Introduction, and having in mind Proposition \ref{G1}, one could
ask whether, for some pair $(i,j)$ with $2\le i\le j\le 4$, it is the
case that every relation in the class $\mathsf{E}_i$ is productively
$\mathsf{E}_j$.
This possibility can be ruled out by considering the following.

For a topological space $X$ and $P=Q=(X,\tau,\in)$, we have:
\begin{itemize}
\item[$\mathsf{E}_2$:]
ONE does not have a winning strategy in the game
$\mathsf{G}_1(\mathcal{O}_X,\mathcal{O}_X)$;
\item[$\mathsf{E}_3$:]
$\mathsf{S}_1(\mathcal{O}_X,\mathcal{O}_X)$;
\item[$\mathsf{E}_4$:]
$X$ is a Lindel\"of space.
\end{itemize}

By Lemma 2 of \cite{pawl}, $\mathsf{E}_2$ and $\mathsf{E}_3$ are
equivalent. In Theorem 8 of \cite{stevo}, two Rothberger spaces are
constructed in such a way that their product is not Lindel\"of. The
conjunction of these results shows that $\mathsf{E}_2$ is not strong
enough to imply productivity with respect to either $\mathsf{E}_2$,
$\mathsf{E}_3$ or $\mathsf{E}_4$.

\section*{Acknowledgements}

This research was done during a visit of the first author to the
Department of Mathematics at Boise State University. The author wishes
to express his gratitude to the Department for their hospitality
and academic support.

We are deeply indebted to Samuel Coskey, whose insightful comments led
us to the approach of dominating families here presented.
We would also like to thank Leandro Aurichi, Liljana Babinkostova
and Bruno Pansera for many helpful discussions that greatly influenced
this work.

%


\begin{thebibliography}{100}





\bibitem{addis}
D. F. Addis and J. H. Gresham, \emph{A class of infinite-dimensional
  spaces. Part I: Dimension theory and Alexandroff's Problem},
Fund. Math. {\bf 101}:3 (1978), 195--205.


\bibitem{arh1}
A. V. Arkhangel'skii, \emph{On some topological spaces that occur in
  functional analysis}, Uspekhi Mat. Nauk {\bf 31}:5 (1976), 17--32
(Russian Math. Surveys {\bf 31}:5 (1976), 14--30).


\bibitem{arh2}
A. V. Arkhangel'ski\u\i\mbox{}, \emph{Hurewicz spaces, analytic sets
  and fan tightness of function spaces}, Dokl. Akad. Nauk SSSR
{\bf 287}:3 (1986), 525--528 (Soviet Math. Dokl. {\bf 33}:2 (1986),
396--399).


\bibitem{ab}
L. F. Aurichi and A. Bella, \emph{Topological games and
  productively countably tight spaces}, preprint, arXiv:1307.7928.


\bibitem{renan}
L. F. Aurichi and R. M. Mezabarba, \emph{Productively countably tight
  spaces of the form $C_k(X)$}, preprint, arXiv:1311.2011.


\bibitem{3}
L. F. Aurichi, S. Spadaro and L. Zdomskyy,
\emph{Selective versions of chain condition-type properties},
preprint.


\bibitem{bab}
L. Babinkostova, \emph{Selective screenability game and covering
  dimension}, Topology Proc. {\bf 29}:1 (2005), 13--17.


\bibitem{paracompactness}
L. Babinkostova, Lj. D. R. Ko\v cinac and M. Scheepers, \emph{Notes on
  selection principles in topology (I): paracompactness}, J. Korean
Math. Soc. {\bf 42}:4 (2005), 709--721.


\bibitem{bab.prod}
L. Babinkostova and M. Scheepers, \emph{Products and selection
  principles}, Topology Proc. {\bf 31}:2 (2007), 431--443.


\bibitem{bella-SS}
A. Bella, \emph{When is a Pixley-Roy hyperspace $SS^+$?}, Topology
Appl. {\bf 160}:1 (2013), 99--104.


\bibitem{bella-sakai}
A. Bella and M. Sakai, \emph{Tight points of Pixley-Roy hyperspaces},
Topology Appl. {\bf 160}:16 (2013), 2061--2068.


\bibitem{berner}
A. J. Berner and I. Juh\'asz, \emph{Point-picking games and HFD's},
Models and sets (Aachen, 1983), 53--66, Lecture Notes in Math., 1103,
Springer, Berlin, 1984.


\bibitem{blass}
A. Blass, \emph{Combinatorial cardinal characteristics of the
  continuum}, Handbook of set theory, Vols. 1, 2, 3, 395--489,
Springer, Dordrecht, 2010.


\bibitem{daniels}
P. Daniels, \emph{Pixley-Roy spaces over subsets of the reals},
Topology Appl. {\bf 29}:1 (1988), 93--106.


\bibitem{open-open}
P. Daniels, K. Kunen and H. Zhou, \emph{On the open-open game},
Fund. Math. {\bf 145}:3 (1994), 205--220.


\bibitem{eng}
R. Engelking, \emph{General topology}, Second edition, Sigma Series
in Pure Mathematics, Vol. 6, Heldermann, Berlin, 1989.


\bibitem{frolik}
Z. Frol\'ik, \emph{Generalisations of compact and Lindel\"of spaces},
Czechoslovak Math. J. {\bf 9 (84)} (1959), 172--217. (Russian. English
summary.)


\bibitem{galvin}
F. Galvin, \emph{Indeterminacy of point-open games},
Bull. Acad. Pol. Sci., S\'er. Sci. Math. Astron. Phys. {\bf 26}:5
(1978), 445--449.


\bibitem{gerlits2}
J. Gerlits, \emph{Some properties of $C(X)$, II}, Topology Appl.
{\bf 15}:3 (1983), 255--262.


\bibitem{g-n}
J. Gerlits and Zs. Nagy, \emph{Some properties of $C(X)$, I}, Topology
Appl. {\bf 14}:2 (1982), 151--161.


\bibitem{gruen76}
G. Gruenhage, \emph{Infinite games and generalizations of
  first-countable spaces}, General Topology and Appl. {\bf 6}:3
(1976), 339--352.


\bibitem{eberlein}
G. Gruenhage, \emph{Games, covering properties and Eberlein compacts}, 
Topology Appl. {\bf 23}:3 (1986), 291--297.


\bibitem{hrbacek}
K. Hrbacek and T. Jech, \emph{Introduction to set theory}, Third
edition, Monographs and Textbooks in Pure and Applied
Mathematics, Vol. 220, Marcel Dekker, New York, 1999.


\bibitem{hur}
W. Hurewicz, \emph{\"Uber eine Verallgemeinerung des Borelschen
  Theorems}, Math. Z. {\bf 24} (1926), 401--421.




\bibitem{just}
W. Just, A. W. Miller, M. Scheepers and P. J. Szeptycki,
\emph{The combinatorics of open covers II}, Topology Appl. {\bf 73}:3
(1996), 241--266.


\bibitem{kunen}
K. Kunen, \emph{Set theory}, Studies in Logic, Vol. 34,
College Publications, London, 2011.


\bibitem{pawl}
J. Pawlikowski, \emph{Undetermined sets of point-open games},
Fund. Math. {\bf 144}:3 (1994), 279--285.


\bibitem{e.pol}
E. Pol, \emph{A weakly infinite-dimensional space whose product with
  the irrationals is strongly infinite-dimensional},
Proc. Amer. Math. Soc. {\bf 98}:2 (1986), 349--352.


\bibitem{pyt}
E. Pytkeev, \emph{On the tightness of spaces of continuous functions},
Uspekhi Mat. Nauk {\bf 37}:1 (1982), 157--158 (Russian Math. Surveys
{\bf 37}:1 (1982), 176--177).


\bibitem{roth}
F. Rothberger, \emph{Eine Versch\"arfung der Eigenschaft $C$},
Fund. Math. {\bf 30} (1938), 50--55.


\bibitem{sakai}
M. Sakai, \emph{Property $C''$ and function spaces},
Proc. Amer. Math. Soc. {\bf 104}:3 (1988), 917--919.


\bibitem{sch5}
M. Scheepers, \emph{Combinatorics of open covers (V): Pixley-Roy
  spaces of sets of reals, and $\omega$-covers}, Topology Appl. {\bf
  102}:1 (2000), 13--31.


\bibitem{sch6}
M. Scheepers, \emph{Combinatorics of open covers VI: Selectors for
  sequences of dense sets}, Quaest. Math. {\bf 22}:1 (1999), 109--130.


\bibitem{sch-tight}
M. Scheepers, \emph{Remarks on countable tightness}, Topology
Appl. {\bf 161} (2014), 407--432.


\bibitem{telg1}
R. Telg\'arsky, \emph{Spaces defined by topological games},
Fund. Math. {\bf 88}:3 (1975), 193--223.


\bibitem{telg2}
R. Telg\'arsky, \emph{Spaces defined by topological games, II},
Fund. Math. {\bf 116}:3 (1983), 189--207.


\bibitem{telg?}
R. Telg\'arsky, \emph{On point-open games and their
  generalizations}, Topology, Vol. II (Proc. Fourth Colloq., Budapest,
1978), pp. 1167--1172, Colloq. Math. Soc. J\'anos Bolyai, 23,
North-Holland, Amsterdam--New York, 1980.


\bibitem{topsoe}
R. Telg\'arsky, \emph{On games of Tops\o e}, Math. Scand. {\bf 54}:1
(1984), 170--176.


\bibitem{tkachuk}
V. V. Tkachuk, \emph{Some new versions of an old game},
Comment. Math. Univ. Carolin. {\bf 36}:1 (1995), 177--196.


\bibitem{stevo}
S. Todor\v cevi\' c, \emph{Aronszajn orderings},
\DJ uro Kurepa memorial volume, Publ. Inst. Math. (Beograd)
(N.S.) {\bf 57 (71)} (1995), 29--46.


\bibitem{uspenskii}
V. V. Uspenskii, \emph{Frequency spectrum of functional spaces},
Vestnik Moskov. Univ. Ser. I Mat. Mekh. {\bf 77}:1 (1982), 31--35
(Moscow Univ. Math. Bull. {\bf 37}:1 (1982), 40--45).


\bibitem{vojtas}
P. Vojt\'a\v s, \emph{Generalized Galois-Tukey-connections between
  explicit relations on classical objects of real analysis},
Set theory of the reals (Ramat Gan, 1991), 619--643,
Israel Math. Conf. Proc., Vol. 6, Bar-Ilan Univ., Ramat Gan, 1993. 


\bibitem{yajima}
Y. Yajima, \emph{Topological games and products, II}, Fund. Math.
{\bf 117}:1 (1983), 47--60.







\end{thebibliography}
\end{document}